\numberwithin{theorem}{section}
\newcommand{\TheTitle}{A Discrete Empirical Interpolation Method for Interpretable Immersion and Embedding of Nonlinear Manifolds}
\newcommand{\TheAuthors}{S. E. Otto and C. W. Rowley}
\headers{\TheTitle}{\TheAuthors}
\title{{\TheTitle}\thanks{Last updated \today.
\funding{This work was funded by ARO award W911NF-17-0512 and DARPA.}}}
\author{
  Samuel E. Otto\thanks{Mechanical and Aerospace Engineering, Princeton University, Princeton, NJ
    (\email{sotto@princeton.com}).}
  \and
  Clarence W. Rowley\thanks{Mechanical and Aerospace Engineering, Princeton University, Princeton, NJ (\email{cwrowley@princeton.edu}).}
}
\DeclareMathOperator*{\argmax}{\arg\!\max\enskip}
\DeclareMathOperator*{\argsortDown}{argsort\downarrow\enskip}
\DeclareMathOperator{\vspan}{span}
\DeclareMathOperator{\Range}{Range}
\DeclareMathOperator*{\Expectation}{\!\mathbb{E}}
\DeclareMathOperator{\Vol}{Vol}
\DeclareMathOperator{\house}{house}
\DeclareMathOperator{\length}{length}
\renewcommand*\env@matrix[1][*\c@MaxMatrixCols c]{%
  \hskip -\arraycolsep
  \let\@ifnextchar\new@ifnextchar
  \array{#1}}
\begin{document}

\maketitle

% REQUIRED
\begin{abstract}
  Manifold learning techniques seek to discover structure-preserving mappings of high-dimensional data into low-dimensional spaces. 
  While the new sets of coordinates specified by these mappings can closely parameterize the data, they are generally complicated nonlinear functions of the original variables. This makes them difficult to interpret physically. 
  Furthermore, in data-driven model reduction applications the governing equations may have structure that is destroyed by nonlinear mapping into coordinates on an inertial manifold, creating a computational bottleneck for simulations.
  Instead, we propose to identify a small collection of the original variables which are capable of uniquely determining all others either locally via immersion or globally via embedding of the underlying manifold. 
  When the data lies on a low-dimensional subspace the existing discrete empirical interpolation method (DEIM) accomplishes this with recent variants employing greedy algorithms based on pivoted QR (PQR) factorizations.
  However, low-dimensional manifolds coming from a variety of applications, particularly from advection-dominated PDEs, do not lie in or near any low-dimensional subspace.
  Our proposed approach extends DEIM to data lying near nonlinear manifolds by applying a similar pivoted QR procedure simultaneously on collections of patches making up locally linear approximations of the manifold, resulting in a novel simultaneously pivoted QR (SimPQR) algorithm. 
  An optimization problem is solved at each stage to greedily select a coordinate and the manifold patches over which the coordinate is used for pivoting. 
  The immersion provided by applying SimPQR to the tangent spaces can be extended to an embedding by applying SimPQR a second time to a modified collection of vectors.
  The SimPQR method for computing these ``nonlinear DEIM'' (NLDEIM) coordinates for immersion and embedding is successfully applied to real-world data lying near an inertial manifold in a cylinder wake flow as well as data coming from a viscous Burgers equation with different initial conditions.
\end{abstract}

% REQUIRED
\begin{keywords}
  feature selection, manifold learning, interpretable features, unsupervised learning, reduced-order modeling, rank-revealing factorization, greedy algorithms
\end{keywords}

% REQUIRED
\begin{AMS}
  15-04, 57R40, 57R42, 65D05, 65F30, 68W01, 90C99
\end{AMS}

\section{Introduction}
In this paper, we consider the problem: \textit{given a low-dimensional surface in a high-dimensional euclidean space, how can we identify a small subset of the original coordinates on which the others always depend?}
Originally, we asked this question in the context of creating low-dimensional ``reduced order'' models of high-dimensional dynamical systems arrising from discretized simulations of fluid flows.
However, we also think that the work presented herein might be useful as a tool for unsupervised selection of easily interpretable features for data-science applications.

Accurately modeling fluid flows and other phenomena governed by  spatio-temporal partial differential equations requires a large computational effort to solve the discretized equations on a sufficiently fine grid.
Data-driven reduced order modeling entails using data from previous experiments or simulations to build simplified models with fewer degrees of freedom that still capture essential aspects of the full problem.
This is usually accomplished by identifying an inertial manifold that captures the long-time behavior of the system and restricting the modeled dynamics to evolve on its surface \cite{Rega2005dimension}.

Predominant approaches in solid and fluid dynamics use data or the governing equations to construct one or more low-dimensional subspaces that are used to approximate the dynamics by Galerkin or Petrov-Galerkin projection. 
A survey of such methods is presented in \cite{Benner2015survey}.
One data-driven approach frequently used for model reduction in fluid and solid dynamics is to construct an optimal subspace capturing the solution's variance using Principal Component Analysis (PCA) \cite{Hotelling1933analysis} which is also known as Proper Orthogonal Decomposition (POD) or the Karhunen-Lo{\`e}ve transformation \cite{Karhunen1947lineare, Loeve1978probability}.
However, naive projection of the nonlinear terms in the governing equations still requires evaluation of these terms at all points in the spatial domain followed by inner products with the chosen basis functions.
This computational bottleneck is overcome by using interpolation methods \cite{Everson1995karhunen,Barrault2004empirical,Nguyen2008best,Chaturantabut2010} to reconstruct the state or the nonlinear terms using a small collection of carefully selected state variables often corresponding to points in the spatial domain.
The Discrete Empirical Interpolation Method (DEIM) \cite{Chaturantabut2010, Drmac2016, Drmac2017} is one-such technique that has enabled development of a wide variety of reduced-order modeling techniques over the last decade.
DEIM employs greedy algorithms based on pivoted factorizations of the leading POD modes to efficiently choose a nearly optimal collection of interpolation points.

The problem is that even very simple data sets coming from fluid dynamics and other advection-dominated problems cannot be captured in a low-dimensional subspace \cite{Ohlberger2016reduced}.
For example, a moving localized disturbance cannot be parsimoniously represented as a superposition of modes, even though such solutions lie on a one-dimensional manifold and can often be distinguished reliably by measurements at a small number of spatial grid points.
In general, the solutions to the governing equations might lie on or near a low-dimensional manifold that is curved in such a way that low-dimensional subspaces do a poor job approximating it.
One category of approaches employed in \cite{Dihlmann2011model, Amsallem2012nonlinear, Peherstorfer2014localized, Peng2016nonlinear} utilizes local POD bases in state and/or parameter space that can be combined with their own individual DEIM-based reduced order models.
However, these piecewise linear methods require switching, multiple sets of interpolation points, and do not smoothly capture the manifold structure.
In order to remedy these problems, model reduction techniques that project the state dynamics onto nonlinear manifolds learned from data \cite{Lee2018model, Pyta2016nonlinear, Winstead2017nonlinear} are being developed.
At present, they suffer from essentially the same problems as POD-Galerkin methods before the advent of DEIM, namely, computations must still be performed over the entire spatial domain.

In this paper, we systematically develop an efficient ``Nonlinear DEIM'' (NLDEIM) algorithm that identifies a small collection of state variables (measurements at specific spatial locations in the case of discretized PDEs) that are sufficient to immerse a given state space manifold.
This means that any of the manifold's tangent spaces has a 1-1 orthogonal projection into the chosen coordinates, allowing for reconstruction of state time derivatives tangent to the manifold using only these coordinates.
Our approach, which we call ``Simultaneously Pivoted QR'' (SimPQR), performs a pivoted Householder QR factorization on a collection of orthonormal tangent space bases where the pivot coordinates are shared simultaneously across multiple tangent spaces.
The technique includes a parameter that allows the user to adjust the trade-off between greedily choosing the smallest set of coordinates that immerse the manifold at one extreme and choosing the collection of DEIM coordinates over every tangent space at the other extreme.
By immersing the inertial manifolds identified using data-driven manifold learning in a small set of the original state variables, we hope to enable DEIM-like model reduction to be applied in this setting, however this is beyond the scope of the present work.

While immersing the underlying manifold captures its tangent spaces in a 1-1 fashion, there may be a discrete set of points on the manifold that project to the same point in the reduced set of coordinates, hence we don't necessarily have an embedding yet.
By applying the same SimPQR algorithm to the collection of vectors separating points on these discrete branches of the manifold, we show than an additional set of coordinates is found that, taken together with the original set of immersing coordinates, embeds the manifold.
We think that embedding nonlinear manifolds via projection into a small set of the original coordinates could provide superior interpretability compared with other linear and nonlinear dimensionality reduction techniques that introduce transformations of the original features in data-science applications.
The algorithms discussed herein may be thought of as performing a type of unsuperised feature selection for high-dimensional data lying on nonlinear manifolds.

There are a wide variety of techniques for nonlinear dimensionality reduction that could be used in conjunction with the proposed NLDEIM framework to identify optimal immersion and embedding coordinates for model reduction and feature selection applications.
Some examples include kernel principal component analysis (KPCA) \cite{Scholkopf1998}, Isomap \cite{Tenenbaum2000global}, locally linear embedding (LLE) \cite{Roweis2000nonlinear}, Laplacian eigenmaps \cite{Belkin2003}, diffusion maps \cite{Coifman2006}, and autoencoders \cite{Hinton2006reducing}.
In order to be used in conjunction with NLDEIM, the manifold learning algorithm must be able to construct tangent spaces at a collection of sampled data points.
Since we will use the Laplacian eigemaps algorithm on many of our examples due to its simplicity, we provide an overview of this algorithm along with a technique for interpolating its coordinates and finding tangent spaces in \cref{app:LapEigAndTanSpace}.
We are not aware of other authors using kernel-based manifold learning techniques like KPCA, Laplacian eigenmaps, or diffusion maps to construct tangent spaces, however it should be possible to extend our approach to these other methods too.
The main idea is to construct a smooth function that locally interpolates the embedding coordinates near training data points and differentiate it to find a basis for the tangent space.

A variety of related methods have been developed in \cite{Broomhead2005dimensionality, Jamshidi2007towards, Hegde2015numax, Sun2017sparse, Kvinge2018too, Kvinge2018gpu} that utilize the collection of secant vectors within a collection of data points in order to identify an optimal linear embedding of these data.
However, it is a well-known problem that the methods do not scale well for large data sets since the number of secant vectors grows with the square of the number of data points.
In contrast, our proposed nonlinear DEIM framework utilizes the tangent spaces to the manifold and scales as $\mathcal{O}(K\log K)$ where $K$ is the number of sampled tangent space usually taken to be proportional or equal to the number of training data points.
The secant-based methods mentioned above also differ from our proposed nonlinear DEIM technique in that they identify an optimal projection subspace for the embedding rather than a subset of the original coordinates.
Therefore, they will not yield a substantial improvement in the efficiency of model reduction techniques based on manifold Galerkin projection of the governing equations, as proposed in \cite{Lee2018model}, whereas nonlinear DEIM has the potential to do so.

This paper is organized as follows:
In \cref{sec:DEIM} we review some fundamental results from the literature and the commonly used Q-DEIM algorithm based on pivoted QR (PQR) factorization presented in \cite{Drmac2016}.
This discussion lays the groundwork for our development of a novel simultaneously pivoted QR (SimPQR) algorithm for identifying coordinate immersions in \cref{sec:immersion_coordinates}.
In \cref{subsec:simPQRintro}, the SimPQR method is motivated and systematically developed.
The SimPQR algorithm requires solving an optimization problem for the pivoting during each stage whose solution method is developed in \cref{subsec:optimalPivoting}.
The solution path for our SimPQR algorithm over the range of user-defined parameter values is explored in \cref{subsec:GammaPath} where we find that the next largest and smallest values of the parameter producing a change in the solution can be computed with little additional cost.
Finally in \cref{subsec:theoreticalGuarantees} we present some guidelines for choosing the user-defined parameter in the form of theoretical guarantees regarding the trade-off between number of coordinates and tangent space reconstruction robustness.
The novel approach developed in \cref{sec:BranchingCoordinates} shows that SimPQR can be applied a second time to modified collections of vectors in order to identify additional coordinates that extend the previously computed immersion to an embedding.
In \cref{sec:applications} we apply our methods to identify spatial sampling locations embedding the flow behind a cylinder in \cref{subsec:CylinderWakeSampling} and solutions of the viscous Burgers equation starting from different initial conditions in \cref{subsec:Burgers}.
The Nonlinear DEIM methods using SimPQR are compared against DEIM methods using PQR.
Additional toy examples used to illustrate different aspects of Nonlinear DEIM throughout the paper are presented in \cref{app:ToyModels}.

\section{Linear manifolds and the Discrete Empirical Interpolation Method}
\label{sec:DEIM}
If the manifold under consideration is a subspace, then a solution strategy called the Discrete Empirical Interpolation Method or DEIM \cite{Chaturantabut2010} is already well-understood \cite{Drmac2017}.
Suppose that the data lies near ar $r$-dimensional subspace spanned by the columns of the orthonormal matrix $\mathbf{U}\in\mathbb{R}^{n\times r}$.
To measure $d$-components, $\mathscr{P} = \lbrace j_1,\ldots,j_d \rbrace \subseteq \lbrace 1,\ldots, n \rbrace$, of our data vectors $\mathbf{x} = \mathbf{U}\mathbf{z} + \mathbf{n}\in\mathbb{R}^n$, let us introduce the sampling matrix $\boldsymbol{\Pi}_{\mathscr{P}} = \begin{bmatrix} \mathbf{e}_{j_1} & \cdots & \mathbf{e}_{j_d} \end{bmatrix}$ where $\mathbf{e}_{j_1},$ $\ldots,$ $\mathbf{e}_{j_d}$ are selected columns from the $n\times n$ identity matrix $\mathbf{I_n}$.
Thus, specific features corresponding to these indices are sampled according to
\begin{equation}
    \underbrace{\begin{bmatrix}
    x_{j_1} \\
    \vdots \\
    x_{j_d}
    \end{bmatrix}}_{\boldsymbol{\Pi}_{\mathscr{P}}^T\mathbf{x}} = 
    \underbrace{\begin{bmatrix}
    U_{j_1, 1} & \cdots & U_{j_1, r}\\
    \vdots & & \vdots \\
    U_{j_d, 1} & \cdots & U_{j_d, r}
    \end{bmatrix}}_{\boldsymbol{\Pi}_{\mathscr{P}}^T\mathbf{U}} 
    \underbrace{\begin{bmatrix}
    z_1 \\
    \vdots \\
    z_r
    \end{bmatrix}}_{\mathbf{z}} +
    \underbrace{\begin{bmatrix}
    n_{j_1} \\
    \vdots \\
    n_{j_d}
    \end{bmatrix}}_{\boldsymbol{\Pi}_{\mathscr{P}}^T\mathbf{n}}.
\end{equation}
Let $(\cdot)^{+}$ denote the Moore-Penrose pseudoinverse. If the sampling indices are chosen so that $\boldsymbol{\Pi}_{\mathscr{P}}^T\mathbf{U}$ is left-invertible, then we may approximately recover 
\begin{equation}
    \left(\boldsymbol{\Pi}_{\mathscr{P}}^T\mathbf{U}\right)^{+}\boldsymbol{\Pi}_{\mathscr{P}}^T\mathbf{x} = \mathbf{z} + \left(\boldsymbol{\Pi}_{\mathscr{P}}^T\mathbf{U}\right)^{+}\boldsymbol{\Pi}_{\mathscr{P}}^T\mathbf{n} \quad \Rightarrow \quad \mathbf{\hat{x}} = \mathbf{U}\left(\boldsymbol{\Pi}_{\mathscr{P}}^T\mathbf{U}\right)^{+}\boldsymbol{\Pi}_{\mathscr{P}}^T\mathbf{x},
    \label{eqn:linear_DEIM_recon}
\end{equation}
where $\mathbf{\hat{x}}$ is our estimate for $\mathbf{x}$ based on the sampled coordinates.
If $\boldsymbol{\Pi}_{\mathscr{P}}^T\mathbf{U}$ is not left-invertible then we will call the subspace, $\Range (\mathbf{U})$, ``$\boldsymbol{\Pi}_{\mathscr{P}}$-singular''. 
In the above, we could replace $(\cdot)^{+}$ with any left-inverse that may be chosen to minimize error in the above reconstruction given the probability distribution of the noise. 
The Moore-Penrose pseudoinverse is used here since it minimizes the reconstruction error when the noise $\mathbf{n}$ has mean zero and isotropic co-variance.

We see that the error
\begin{equation}
    \Vert \mathbf{x} - \mathbf{\hat{x}} \Vert_2 
    = \Vert \mathbf{U}\left(\boldsymbol{\Pi}_{\mathscr{P}}^T\mathbf{U}\right)^{+}\boldsymbol{\Pi}_{\mathscr{P}}^T\mathbf{n} \Vert_2
    \leq \Vert \left(\boldsymbol{\Pi}_{\mathscr{P}}^T\mathbf{U}\right)^{+} \Vert_2 \Vert \boldsymbol{\Pi}_{\mathscr{P}}^T\mathbf{n} \Vert_2
     \label{eqn:linear_DEIM_recon_error}
\end{equation}
depends on the amplification of the noise in the selected features due to $\left(\boldsymbol{\Pi}_{\mathscr{P}}^T\mathbf{U}\right)^{+}$.
Thus, it is desirable to choose the sampled features to maximize the smallest nonzero singular value $\sigma_{min}\left(\boldsymbol{\Pi}_{\mathscr{P}}^T\mathbf{U}\right)$.
As a surrogate for the smallest nonzero singular value, it is easier to try to make all of singular values large by attempting to maximize a matrix ``volume'',
\begin{equation}
    \Vol\left(\boldsymbol{\Pi}_{\mathscr{P}}^T\mathbf{U}\right) = \max_{\mathscr{I}\subseteq\lbrace 1,\ldots,d\rbrace,\ \vert \mathscr{I} \vert = r} \left\vert \det{\left( [\boldsymbol{\Pi}_{\mathscr{P}}^T\mathbf{U}]_{\mathscr{I},:} \right)} \right\vert.
\end{equation}
The matrix volume, $\Vol (\mathbf{M})$ where $\mathbf{M}\in\mathbb{R}^{p\times q}$, is defined to be the largest absolute determinant among the $\min\lbrace p,q \rbrace$ square sub-matrices of $\mathbf{M}$.
\Cref{prop:volumeSurrogate} shows that the volume defined above lower bounds the product of the singular values, which in turn lower bounds the smallest singular value of $\boldsymbol{\Pi}_{\mathscr{P}}^T\mathbf{U}$.
Therefore, it is a good surrogate when our goal is to make $\sigma_{min}\left(\boldsymbol{\Pi}_{\mathscr{P}}^T\mathbf{U}\right)$ large.
\begin{proposition}[Volume as a Surrogate]
\label{prop:volumeSurrogate}
Let $\sigma_1(\boldsymbol{\Pi}_{\mathscr{P}}^T\mathbf{U}) \geq \sigma_2(\boldsymbol{\Pi}_{\mathscr{P}}^T\mathbf{U}) \geq \cdots \geq \sigma_r(\boldsymbol{\Pi}_{\mathscr{P}}^T\mathbf{U}) \geq 0$ denote the largest singular values of the matrix $\boldsymbol{\Pi}_{\mathscr{P}}^T\mathbf{U}$ in descending order. 
Then,
\begin{equation}
    \sigma_{min}\left(\boldsymbol{\Pi}_{\mathscr{P}}^T\mathbf{U}\right) \geq 
    \prod_{l=1}^r \sigma_{l}\left(\boldsymbol{\Pi}_{\mathscr{P}}^T\mathbf{U}\right) \geq
    \Vol\left(\boldsymbol{\Pi}_{\mathscr{P}}^T\mathbf{U}\right).
\end{equation}
\end{proposition}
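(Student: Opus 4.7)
The plan is to prove the two inequalities separately using classical facts about singular values and determinants, with no significant technical hurdle in either step.

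For the left inequality $\sigma_{\min} \geq \prod_l \sigma_l$, the starting point is that $\boldsymbol{\Pi}_{\mathscr{P}}^T\mathbf{U}$ is obtained by selecting rows of the column-orthonormal matrix $\mathbf{U}$. Since $\mathbf{U}^T\mathbf{U} = \mathbf{I}_r$, for any $\mathbf{v}\in\mathbb{R}^r$ we have $\Vert \boldsymbol{\Pi}_{\mathscr{P}}^T\mathbf{U}\mathbf{v}\Vert_2 \leq \Vert\mathbf{U}\mathbf{v}\Vert_2 = \Vert\mathbf{v}\Vert_2$, so every singular value satisfies $\sigma_l(\boldsymbol{\Pi}_{\mathscr{P}}^T\mathbf{U}) \leq 1$. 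I would then factor the smallest singular value out of the product and bound the remaining $r-1$ factors by $1$, which gives $\prod_{l=1}^r \sigma_l = \sigma_{\min}\prod_{l=1}^{r-1}\sigma_l \leq \sigma_{\min}$ at once.

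For the right inequality, I would apply the Cauchy--Binet formula. Writing $\mathbf{M} = \boldsymbol{\Pi}_{\mathscr{P}}^T\mathbf{U} \in \mathbb{R}^{d\times r}$ with $d\geq r$ (which is implicitly required for the set of $r$-element row subsets in the definition of $\Vol$ to be nonempty), Cauchy--Binet gives
\begin{equation*}
    \prod_{l=1}^r \sigma_l(\mathbf{M})^2 = \det(\mathbf{M}^T\mathbf{M}) = \sum_{\mathscr{I}\subseteq\{1,\ldots,d\},\,\vert\mathscr{I}\vert = r} \det\bigl(\mathbf{M}_{\mathscr{I},:}\bigr)^2.
\end{equation*}
Because the right-hand side is a sum of nonnegative terms, it is bounded below by its maximum summand $\Vol(\mathbf{M})^2 = \max_{\mathscr{I}} \det(\mathbf{M}_{\mathscr{I},:})^2$. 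Taking square roots then yields $\prod_{l=1}^r \sigma_l \geq \Vol(\mathbf{M})$, and substituting $\mathbf{M} = \boldsymbol{\Pi}_{\mathscr{P}}^T\mathbf{U}$ finishes that step.

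The only delicate points, which I do not expect to be serious obstacles, are edge cases. If $\boldsymbol{\Pi}_{\mathscr{P}}^T\mathbf{U}$ is rank-deficient then $\sigma_{\min} = 0$, the product of singular values vanishes, and every $r\times r$ submatrix is singular, so $\Vol$ also collapses to $0$ and the chain holds trivially. The degenerate case $d < r$ falls outside the DEIM regime since then $\boldsymbol{\Pi}_{\mathscr{P}}^T\mathbf{U}$ cannot be left-invertible at all, so it does not need separate treatment.
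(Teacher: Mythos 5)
Your proof is correct. The first inequality is handled in essentially the same spirit as the paper: both you and the paper establish $\sigma_l(\boldsymbol{\Pi}_{\mathscr{P}}^T\mathbf{U})\leq 1$ and then factor $\sigma_{\min}$ out of the product. The only difference there is the justification — the paper appeals to the singular value interlacing property for sub-matrices (citing Thompson and Queir\'o), whereas you use the direct norm argument $\Vert\boldsymbol{\Pi}_{\mathscr{P}}^T\mathbf{U}\mathbf{v}\Vert_2 \leq \Vert\mathbf{U}\mathbf{v}\Vert_2 = \Vert\mathbf{v}\Vert_2$; these are equivalent in content, with yours being a touch more elementary.

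The second inequality is where you genuinely diverge. The paper again invokes interlacing: the $l$th singular value of the $r\times r$ sub-matrix $[\boldsymbol{\Pi}_{\mathscr{P}}^T\mathbf{U}]_{\mathscr{I},:}$ is dominated term-by-term by $\sigma_l(\boldsymbol{\Pi}_{\mathscr{P}}^T\mathbf{U})$, so the product of the sub-matrix's singular values (which is its absolute determinant, i.e.\ the volume) is dominated by the product of the full matrix's singular values. You instead use the Cauchy--Binet identity, $\prod_l\sigma_l^2 = \det(\mathbf{M}^T\mathbf{M}) = \sum_{\mathscr{I}}\det(\mathbf{M}_{\mathscr{I},:})^2$, and drop all but the largest summand. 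Both are correct. The interlacing route is more modular — it reuses the same lemma that established $\sigma_l\leq 1$, and it gives the term-by-term comparison $\sigma_l(\text{sub})\leq\sigma_l(\text{full})$ as a byproduct. The Cauchy--Binet route is self-contained, avoids citing interlacing results entirely, and is actually sharper: it exhibits $\Vol(\mathbf{M})^2$ as a single term in a sum of nonnegative quantities equal to $\prod_l\sigma_l^2$, so the slack in the second inequality is exactly the sum of the squared minors over the non-maximal index sets. Your handling of the degenerate cases (rank deficiency, $d<r$) is also sound and not something the paper pauses on.
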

\begin{proof}
See \cref{app:Proofs}.
\end{proof}

The Q-DEIM algorithm \cite{Drmac2016} chooses exactly $d=r$ coordinates through a Householder QR factorization
\begin{equation}
    \mathbf{U}^T\begin{bmatrix}[c|c]
            \boldsymbol{\Pi}_{\mathscr{P}} & \boldsymbol{\Pi}_{\mathscr{P}^c} \end{bmatrix} = \mathbf{Q} \begin{bmatrix}[c|c] \mathbf{R}^{(1)} & \mathbf{R}^{(2)} \end{bmatrix},
    \label{eqn:PQR}
\end{equation}
employing Businger-Golub \cite{Businger1965linear} column pivoting.
This algorithm is given as Algorithm 5.4.1 in \cite{Golub2013matrix} and is reproduced here as \cref{alg:PQR}; will refer to it as ``Pivoted QR'' or PQR for short.
This pivoting strategy chooses the column 
\begin{equation}
    j^* = \argmax_{j\in\mathscr{P}^c}\mathbf{c}(j)
\end{equation}
at the $i$th stage that gives the largest diagonal entry $\vert \mathbf{R}^{(1)}(i,i) \vert = \sqrt{\mathbf{c}(j^*)}$. 
The entries $\mathbf{c}(j)$ keep track of the residual column magnitudes which become the candidate diagonal entries at each stage.
Therefore, \cite{Drmac2016} concludes that it can be thought of as a greedy algorithm for maximizing the volume
\begin{equation}
    \Vol\left(\boldsymbol{\Pi}_{\mathscr{P}}^T\mathbf{U}\right) = \Vol\left(\mathbf{R}^{(1)}\right) = \prod_{i=1}^r \vert \mathbf{R}^{(1)}(i,i) \vert.
\end{equation}
The application of Householder transformations to upper-triangularize $\mathbf{U}^T$ and the reconstruction of $\mathbf{Q}$, $\mathbf{R}^{(1)}$ and $\mathbf{R}^{(2)}$ from the output of \cref{alg:PQR} are discussed in \cref{app:Householder}.

As was shown in \cite{Drmac2016}, performing Q-DEIM using PQR yields the bound,
\begin{equation}
    \Vert \left(\boldsymbol{\Pi}_{\mathscr{P}}^T\mathbf{U}\right)^{-1} \Vert_2 \leq \sqrt{n-r+1} \frac{\sqrt{4^r+6r-1}}{3},
\end{equation}
Though they find that the second term only achieves its $\mathcal{O}(2^r)$ dependence on pathological matrices.
In practice the second term has $\mathcal{O}(r)$ dependence yielding the practical error bound
\begin{equation}
    \Vert \left(\boldsymbol{\Pi}_{\mathscr{P}}^T\mathbf{U}\right)^{-1} \Vert_2 \leq \mathcal{O} (r\sqrt{n-r}).
\end{equation}
The bound may be improved \cite{Drmac2017} to $\sqrt{1+\eta^2r(n-r)}$ for any $\eta\geq 1$ through a Strong Rank-Revealing QR factorization \cite{Gu1996} that employs additional column pivoting operations based on maximizing the volume.
However, more pivoting operations are required for smaller $\eta$.

\begin{algorithm}
  \caption{Householder QR with Column Pivoting \cite{Golub2013matrix}}
  \label{alg:PQR}
  \begin{algorithmic}
    \STATE{Given: $\mathbf{A} = \mathbf{U}^T$}
    \COMMENT{initialize matrix (to be over-written)}
    \STATE{$\mathscr{P} = []$ and $\mathscr{P}^c = [1,\ldots,n]$}
    \COMMENT{initialize pivot index lists}
    \STATE{$\mathbf{c}(j) = \Vert \mathbf{A}(:,j) \Vert_2^2$ for $j=1,\ldots,n$}
    \COMMENT{compute initial residual norms}
    \STATE{$i = 1$}
    \COMMENT{QR progress}
    \WHILE[continue while there are nonzero residuals]{$\exists j$ s.t. $\sqrt{c(j)} > \epsilon$}
        \STATE{$j^* = \argmax_{j\in\mathscr{P}^c}{\mathbf{c}(j)}$}
        \COMMENT{pick pivot column}
        \STATE{$(\mathbf{v},\ \beta) = \house (\mathbf{A}(i:r,j^*))$}
        \COMMENT{Householder transform using \cref{alg:House}}
        \STATE{$\mathbf{A}(i:r, \mathscr{P}^c) \leftarrow (\mathbf{I}_{r-i+1} - \beta \mathbf{v}\mathbf{v}^T) \mathbf{A}(i:r, \mathscr{P}^c)$}
        \COMMENT{transform remaining columns}
        \STATE{$\mathbf{A}(i+1:r,j^*) \leftarrow \mathbf{v}(2:r-i+1)$}
        \COMMENT{store vector in zeroed entries of piv col}
        \FOR[loop over remaining columns]{$j\in\mathscr{P}^c$}
            \STATE{$\mathbf{c}(j) \leftarrow \mathbf{c}(j) - \mathbf{A}(i,j)^2$}
            \COMMENT{update residual magnitudes}
        \ENDFOR
        \STATE{$\mathscr{P} \leftarrow [\mathscr{P}(:), j^*]$ and $\mathscr{P}^c \leftarrow \mathscr{P}^c\setminus [j^*]$}
        \COMMENT{update pivot index sets}
        \STATE{$i \leftarrow i + 1$}
        \COMMENT{update QR progress}
    \ENDWHILE
    \RETURN{$\mathscr{P}$, $\mathscr{P}^c$, and $\mathbf{A}$}
  \end{algorithmic}
\end{algorithm}

\section{Discovering Immersion Coordinates for Nonlinear Manifolds}
\label{sec:immersion_coordinates}
Extending DEIM to nonlinear manifolds requires us to find a collection of the original features $x_{j_1},$ $\ldots,$ $x_{j_d}$ that determine the point $\mathbf{x}\in\mathcal{M}$ uniquely. 
\Cref{fig:2dSurfIn10D_TangentPlanes} shows two 3D projections of a 2D manifold in $\mathbb{R}^{10}$ in which each coordinate is a function of the first and last coordinates $x_1$ and $x_{10}$.
This entails two criteria, the first is local in character and the second is global.
We shall take up the local condition in this section and arrive at a set of coordinates that are sufficient to paramterize the manifold locally, thereby producing an immersion.
The global condition will be taken up in \cref{sec:BranchingCoordinates} and will be used to settle discrete ambiguities arising from the global manifold geometry, extending the immersion to an embedding.
\begin{figure}
    \centering
    \includegraphics[width=0.48\linewidth]{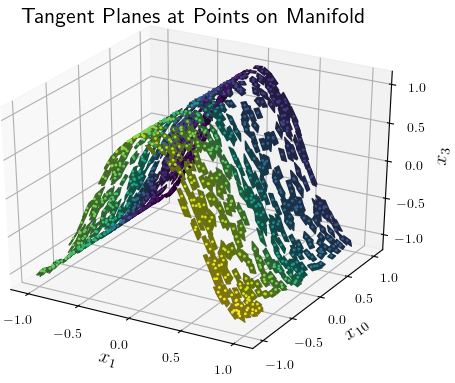}
    \hfill
    \includegraphics[width=0.48\linewidth]{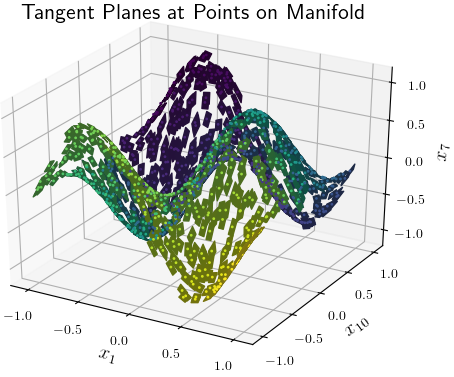}
    \caption{3-dimensional projections of synthetic data lying on a 2-dimensional manifold \cref{eqn:2DsurfMfd} embedded in $\mathbb{R}^{10}$ described fully by the first and last coordinates $x_1$ and $x_{10}$. The data is colored according to the leading graph Laplacian eigenfunction. Tangent planes at each data point are shown.}
    \label{fig:2dSurfIn10D_TangentPlanes}
\end{figure}

For simplicity, let us assume that $\mathcal{M}$ is a smooth, compact, manifold embedded in $\mathbb{R}^n$. 
Locally, we may approximate the manifold near $\mathbf{\bar{x}}\in\mathcal{M}$ using its tangent plane $\mathbf{\bar{x}} + T_{\mathbf{\bar{x}}}\mathcal{M}$.
Such tangent planes on two different example manifolds projected into three dimensions are shown in \cref{fig:2dSurfIn10D_TangentPlanes} and \cref{fig:SpiralMfd_ImmersionDiagram_Planes}.
In order to produce the desired immersion, our chosen set of coordinate indices $\mathscr{P}$ must act like DEIM coordinates in the sense of \cref{sec:DEIM} on each such tangent plane.
In other words, if $T_{\mathbf{\bar{x}}}\mathcal{M}$ is spanned by the orthonormal matrix $\mathbf{U}_{\mathbf{\bar{x}}}$, then $\boldsymbol{\Pi}_{\mathscr{P}}^T\mathbf{U}_{\mathbf{\bar{x}}}$ must be left-invertible $\forall \mathbf{\bar{x}}\in\mathcal{M}$.
Therefore, it is possible to find a neighborhood $\mathcal{U}_{\mathbf{\bar{x}}}\subseteq\mathcal{M}$ around any point $\mathbf{\bar{x}}\in\mathcal{M}$ so that the immersion $\boldsymbol{\Pi}_{\mathscr{P}}^T$ is a 1-1 function on $\mathcal{U}_{\mathbf{\bar{x}}}$.
Observe that in the spiral shaped manifold in \cref{fig:SpiralMfd_ImmersionDiagram_Planes}, this can be achieved by projecting into the $\mathbf{x}_1$, $\mathbf{x}_2$ coordinate plane.
If the selected coordinates produce an immersion, then the locally linear approximation
\begin{equation}
    \mathbf{x} = \mathbf{\bar{x}} + \mathbf{U}_{\mathbf{\bar{x}}}\left(\boldsymbol{\Pi}_{\mathscr{P}}^T\mathbf{U}_{\mathbf{\bar{x}}}\right)^{+} \boldsymbol{\Pi}_{\mathscr{P}}^T(\mathbf{x} - \mathbf{\bar{x}}) + \mathcal{O}\left(\Vert \mathbf{x} - \mathbf{\bar{x}} \Vert^2 \right) \quad \mbox{as} \quad \mathbf{x}\to\mathbf{\bar{x}}, \quad
    \mathbf{x}\in\mathcal{U}_{\mathbf{\bar{x}}}\subseteq\mathcal{M},
    \label{eqn:locallyLinearRecon0}
\end{equation}
holds for all points on the manifold.
Compactness of $\mathcal{M}$ says that there is some finite collection of these neighborhoods $\lbrace \mathcal{U}_1,\ldots,\mathcal{U}_N \rbrace \subseteq \lbrace \mathcal{U}_{\mathbf{\bar{x}}} \rbrace_{\mathbf{\bar{x}}\in\mathcal{M}}$ that cover $\mathcal{M}$.
Since the map $\boldsymbol{\Pi}_{\mathscr{P}}^T$ is 1-1 on each of these neighborhoods, we can think of $\boldsymbol{\Pi}_{\mathscr{P}}^T$ as a local embedding on a finite collection of patches that cover $\mathcal{M}$.
This means that all local ambiguities between points in neighborhoods $\mathcal{U}_{i}$, $i=1\ldots,N$ on $\mathcal{M}$ are resolved by measuring the coordinate indices $\mathscr{P}$.
For example, a finite collection of such patches on which projection into the $\mathbf{x}_1$, $\mathbf{x}_2$ coordinate plane is a local embedding are shown in \cref{fig:SpiralMfd_ImmersionDiagram_Nhds}.
If we can take $\mathcal{U}_1 = \mathcal{U}_{\mathbf{\bar{x}}} = \mathcal{M}$ for any $\mathbf{\bar{x}}\in\mathcal{M}$, then $\boldsymbol{\Pi}_{\mathscr{P}}^T$ is an embedding of $\mathcal{M}$.
\begin{figure}
    \centering
    \subfloat[]{
    \includegraphics[width=0.48\linewidth]{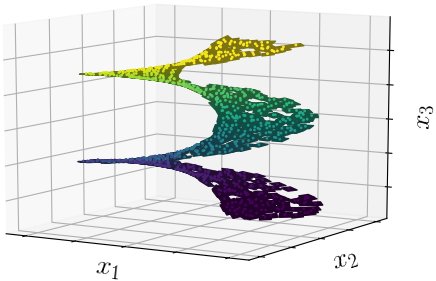}
    \label{fig:SpiralMfd_ImmersionDiagram_Planes}
    }
    \hfill
    \subfloat[]{
    \includegraphics[width=0.48\linewidth]{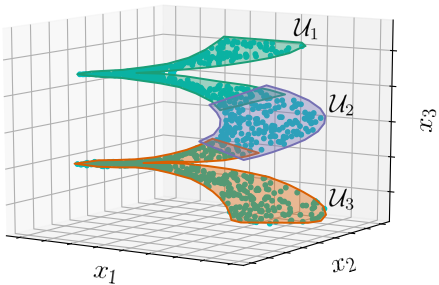}
    \label{fig:SpiralMfd_ImmersionDiagram_Nhds}
    }
    \caption{Synthetic data points randomly sampled on the spiral shaped manifold described by \cref{eqn:SpiralMfd} in \cref{subapp:SpiralMfd}.
    Observe that the underlying manifold can be immersed in coordinates $\mathbf{x}_1$ and $\mathbf{x}_2$, but we need the additional coordinate $\mathbf{x}_3$ to produce an embedding. \Cref{fig:SpiralMfd_ImmersionDiagram_Planes} shows tangent planes computed at each point and \Cref{fig:SpiralMfd_ImmersionDiagram_Nhds} shows a finite collection of neighborhoods over which the projection into $\mathbf{x}_1$ and $\mathbf{x}_2$ is 1-1.}
    \label{fig:SpiralMfd_ImmersionDiagram}
\end{figure}

\subsection{Simultaneously Pivoted QR Method}
\label{subsec:simPQRintro}
Since the manifold is compact, let us construct a representative collection of orthonormal matrices $\mathbf{U}_1,$ $\ldots,$ $\mathbf{U}_K\in\mathbb{R}^{n\times r}$ whose columns span the tangent spaces $T_{\mathbf{\bar{x}}_k}\mathcal{M}$ at a set of points $\mathbf{\bar{x}}_1,$ $\ldots,$ $\mathbf{\bar{x}}_K\in\mathcal{M}$.
These points are chosen so that any point $\mathbf{x}\in\mathcal{M}$ may closely estimated by the locally linear approximation \cref{eqn:locallyLinearRecon0} about some $\mathbf{\bar{x}}_k\in\mathcal{M}$.
These tangent spaces, the associated bases, and locally linear approximations will often be referred to as ``patches'' for short throughout this paper.
We will call $\Vol(\boldsymbol{\Pi}_{\mathscr{P}}^T\mathbf{U}_{\mathbf{\bar{x}}})$ the ``volume'' of the patch at $\mathbf{\bar{x}}\in\mathcal{M}$ when coordinates $\mathscr{P}$ are selected.

\begin{remark}[Getting Tangent Spaces from Data]
\label{rem:TangentSpacesFromData}
There are a variety of methods that can be used to construct or approximate the matrices $\mathbf{U}_1,$ $\ldots,$ $\mathbf{U}_K\in\mathbb{R}^{n\times r}$ from training data points sampled on $\mathcal{M}$.
For example, we may have $\mathcal{M} = \lbrace \mathbf{x}(\boldsymbol{\xi})\in\mathbb{R}^n\ :\ \xi\in\Omega\subset\mathbb{R}^r \rbrace$, where $\mathbf{x}(\boldsymbol{\xi})$ is the result of an experiment that depends one-to-one and smoothly on the parameters $\boldsymbol{\xi}$.
If experiments are performed at a collection of non-co-planar parameter values $\boldsymbol{\xi}_1,$ $\ldots,$ $\boldsymbol{\xi}_m$, then an interpolating function $\mathbf{\tilde{x}}(\boldsymbol{\xi})$ can be constructed.
The interpolating function is differentiated $\frac{\partial}{\partial\xi_1}\mathbf{\tilde{x}}(\boldsymbol{\bar{\xi}}),$ $\ldots,$ $\frac{\partial}{\partial\xi_r}\mathbf{\tilde{x}}(\boldsymbol{\bar{\xi}})$, yielding a collection of vectors spanning an approximation to the tangent space at $\mathbf{x}(\boldsymbol{\bar{\xi}})\in\mathcal{M}$.

Otherwise, if the underlying parameters $\boldsymbol{\xi}$ are not known, but we have a collection of training data from $\mathcal{M}$, then we may use Laplacian eigenmaps \cite{Belkin2003} or other manifold learning techniques to find a set of parameterizing coordinates.
The method described in \cref{app:LapEigAndTanSpace} shows how to interpolate and differentiate the Laplacian eigenmaps coordinates in order to construct vectors that span the tangent spaces at training data points.
An even simpler alternative might be to find a collection of nearest neighbors to points $\mathbf{\bar{x}}_1,$ $\ldots,$ $\mathbf{\bar{x}}_K\in\mathcal{M}$ and perform local principal component analysis (PCA). 
This yields a basis for a secant approximation to the tangent space.
\end{remark}

Once a representative collection of patches has been found, we shall construct a ``Simultaneously Pivoted'' QR (SimPQR) factorization
\begin{equation}
\begin{bmatrix}
\mathbf{U}_1^T \\
\vdots \\
\mathbf{U}_K^T
\end{bmatrix}
\begin{bmatrix}[c|c]
\boldsymbol{\Pi}_{\mathscr{P}} & \boldsymbol{\Pi}_{\mathscr{P}^c} \end{bmatrix} = 
\begin{bmatrix}
\mathbf{Q}_1 & & \\
& \ddots & \\
& & \mathbf{Q}_K
\end{bmatrix}
\begin{bmatrix}[c|c]
\mathbf{R}_1^{(1)} & \mathbf{R}_1^{(2)} \\
\vdots & \vdots \\
\mathbf{R}_K^{(1)} & \mathbf{R}_K^{(2)}
\end{bmatrix},
\label{eqn:SimPQR}
\end{equation}
in which the pivot columns are shared across the patches $\mathbf{U}_1^T,$ $\ldots,$ $\mathbf{U}_K^T$.
The matrices $\mathbf{R}_k^{(1)}\in\mathbb{R}^{r\times d}$ will each contain an $r\times r$ upper-triangular sub-matrix $\mathbf{\tilde{R}}_k^{(1)}$ corresponding to local coordinate indices $\mathscr{P}_k\subseteq\mathscr{P}$.
We shall try to pick pivot columns that make all volumes $\det(\mathbf{\tilde{R}}_k^{(1)})$ simultaneously large, ensuring that all $\boldsymbol{\Pi}_{\mathscr{P}}^T\mathbf{U}_k$ are left-invertible, with volumes lower-bounded by 
\begin{equation}
    \Vol{\left(\boldsymbol{\Pi}_{\mathscr{P}}^T\mathbf{U}_k\right)} 
    \geq \det{\left(\mathbf{\tilde{R}}_k^{(1)}\right)} 
    = \prod_{i=1}^r \vert \mathbf{\tilde{R}}_k^{(1)}(i,i) \vert.
\end{equation}
If we let $\boldsymbol{\Pi}_{\mathscr{P}_k}$ and $\boldsymbol{\Pi}_{\mathscr{P}_k^c}$ be the permutation matrices associated with the patch-wise indices $\mathscr{P}_k$ and $\mathscr{P}_k^c$ respectively, then a local PQR factorization
\begin{equation}
\mathbf{U}_k^T 
\begin{bmatrix}[c|c]
\boldsymbol{\Pi}_{\mathscr{P}_k} & \boldsymbol{\Pi}_{\mathscr{P}_k^c}
\end{bmatrix} = 
\mathbf{Q}_k
\begin{bmatrix}[c|c]
\mathbf{\tilde{R}}_k^{(1)} & \mathbf{\tilde{R}}_k^{(2)}
\end{bmatrix},
\end{equation}
is obtained on each patch from the SimPQR factorization.
Therefore, the SimPQR factorization \cref{eqn:SimPQR} is analogous to PQR factorization \cref{eqn:PQR} applied simultaneously with each $\mathscr{P}_k\subseteq\mathscr{P}$ to bases coming from multiple linear approximations of the manifold.

In order to develop an algorithm for computing a SimPQR factorization \cref{eqn:SimPQR}, let us consider two new facets of the immersion problem for collections of patches that arise in the nonlinear setting.
First, observe that when the underlying manifold is curved, there will be an inherent trade-off between choosing fewer total coordinates $\mathscr{P} = \bigcup_{k=1}^K \mathscr{P}_k$, and choosing local coordinates $\mathscr{P}_k$ that maximize $\Vol{\left(\boldsymbol{\Pi}_{\mathscr{P}_k}^T\mathbf{U}_k\right)}$.
On one hand, we could force the $\mathscr{P}_k$'s to all overlap as much as possible, minimizing $\vert \mathscr{P} \vert$ and creating a parsimonious immersion of $\mathcal{M}$.
However, different sets of coordinates $\mathscr{P}_k$ may achieve high volumes $\Vol{\left(\boldsymbol{\Pi}_{\mathscr{P}_k}^T\mathbf{U}_k\right)}$ on different patches, so forcing the $\mathscr{P}_k$'s to overlap may come at the cost of smaller patch volumes.
On the other hand, we could pick each $\mathscr{P}_k$, possibly non-overlapping, to maximize $\Vol{\left(\boldsymbol{\Pi}_{\mathscr{P}_k}^T\mathbf{U}_k\right)}$ thereby creating an immersion that allows for highly noise-robust reconstruction using \cref{eqn:locallyLinearRecon0} at the sacrifice of parsimony.
This second case could be approximately accomplished by performing PQR separately on each patch $\mathbf{U}_k^T$ to find $\mathscr{P}_k$ and combining all the coordinates into $\mathscr{P} = \bigcup_{k=1}^K \mathscr{P}_k$ at the end.
As we will discuss in \cref{subsec:theoreticalGuarantees}, our proposed \cref{alg:SimPQR} for computing a SimPQR factorization provably balances parsimony and robustness at each stage through a user-defined parameter $\gamma$ (see equation \cref{eqn:SimPQR_optimalPivoting}).

Much like \cref{alg:PQR} for computing a PQR factorization, our proposed \cref{alg:SimPQR} will use a column-pivoted Householder QR process to compute a SimPQR factorization.
However, on curved manifolds, we can't always use the selected pivot column to perform Householder reflections on every patch.
In other words, the geometry of $\mathcal{M}\subset\mathbb{R}^n$ may only allow us to add a pivot column $j^*$ to some, but not all sets $\mathscr{P}_k$ during a given stage, resulting in $\mathscr{P}_1$, $\ldots$, $\mathscr{P}_K$ not all equal.
For example, a circle in $\mathbb{R}^2$ is a $1$-dimensional manifold.
Yet, if we choose either coordinate, $x_1$ or $x_2$, as the first pivot column, there will be a tangent vector whose entry at the chosen coordinate is $0$.
There will be many more tangent vectors whose entries at the chosen coordinate are very small.
Such entries should not be used to construct Householder reflections on those patches since the result will be $\boldsymbol{\Pi}_{\mathscr{P}_k}$-singular or nearly $\boldsymbol{\Pi}_{\mathscr{P}_k}$-singular.
Instead, we should use the selected coordinate for Householder QR only on the patches where it will yield an acceptably high volume $\Vol{\left(\boldsymbol{\Pi}_{\mathscr{P}_k}^T\mathbf{U}_k\right)}$.
Then, we should choose the other coordinate to give an acceptably high volumes on the remaining patches.

Another example is the $2$-dimensional cylindrical manifold embedded in $\mathbb{R}^{3}$ discussed in \cref{subapp:cylMfd} whose tangent planes at $K=1000$ randomly sampled points are shown in \cref{fig:CylMfd}.
All tangent planes have a component along the $x_3$ axis, but some lie orthogonal or nearly orthogonal to the $x_1$ or $x_2$ axes.
Thus, the choice of any two coordinates will not be sufficient to resolve local ambiguities between all sets of nearby points.
When finding a SimPQR factorization, if the $x_1$ coordinate is chosen, then Householder QR cannot be carried out on those patches whose tangent planes are nearly orthogonal to the $x_1$ axis.
\textit{Therefore, at each stage of SimPQR we must choose both the pivot column and the patches over which it can be used to perform Householder transformations.} 
The local coordinates sets $\mathscr{P}_k$ keep track of which coordinates were used to perform Householder reflections on patch $k$.
Additionally, it may also be beneficial in some cases to choose the same coordinate again at a later stage for use on a different collection of patches, so we must leave all coordinates available for selection during every stage.
The above considerations highlight aspects of our proposed SimPQR factorization that are needed in the nonlinear setting and distinguish it from standard PQR.

\begin{figure}
    \centering
    \includegraphics[width=0.6\linewidth]{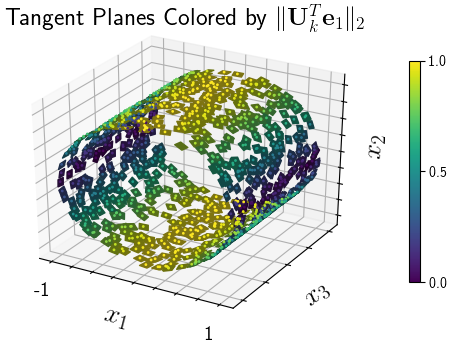}
    \caption{Tangent planes to a cylindrical manifold at $K=1000$ points. The points are colored by the magnitude of the tangent space basis vectors along the $x_1$ axis. We see that the basis vectors are both nearly zero in their $x_1$ entry when $\mathbf{x}_1\approx \pm 1.0$.}
    \label{fig:CylMfd}
\end{figure}

To compute a SimPQR factorization with column-pivoted Householder reflections as in \cref{alg:SimPQR}, let us initialize the residual matrices $\mathbf{A}_k = \mathbf{U}_k^T$ and column magnitudes $\mathbf{c}_k(j) = \Vert \mathbf{A}_k(:,j) \Vert_2^2,$ $j=1,\ldots,n$ on every patch $k=1$, $\ldots$, $K$; they will be updated during each stage of the algorithm.
Let $\mathscr{S}_j = \lbrace k:\ \mathbf{c}_k(j) > \epsilon \rbrace$ denote the set of patches whose $j$th coordinate is not yet eliminated by the factorization process and let
\begin{equation}
    S_{max} = \max_{1\leq j \leq n} \vert \mathscr{S}_j \vert
\end{equation}
be the greatest number of such patches across the choices of coordinates.
In other words, $S_{max}$ is the greatest number of patches on which SimPQR could simultaneously perform Householder reflections at the given stage.
Let the maximum column magnitudes over all the coordinates on each patch be given by
\begin{equation}
    C_{k} = \max_{1\leq j\leq n} \mathbf{c}_k(j).
\end{equation}
These are the magnitudes of the columns that would be selected by Businger-Golub pivoting on each patch.
At each stage, we shall select the pivot column (coordinate index) $j^*$ and the patch index set $\mathscr{S}^*$ over which to perform Householder QR according to
\begin{equation}
    \boxed{
    (j^*, \mathscr{S}^*) = 
    \argmax_{1\leq j\leq n,\ \mathscr{S}\subseteq\mathscr{S}_j}
    \left(
    \frac{\vert \mathscr{S} \vert}{S_{max}} + 
    \gamma\cdot \min_{k\in\mathscr{S}} \sqrt{\frac{\mathbf{c}_k(j)}{C_{k}}}
    \right).
    }
    \label{eqn:SimPQR_optimalPivoting}
\end{equation}
\textit{We emphasize that the pivot column $j^*$ is used to perform Householder reflections only on the collection of patches indexed by $\mathscr{S}^*$ during a given stage of SimPQR.}
Consequently, the local coordinates sets $\mathscr{P}_k$ are updated according to
\begin{equation}
    \mathscr{P}_k \leftarrow \mathscr{P}_k \cup \lbrace j^* \rbrace, \qquad \forall k\in\mathscr{S}^*.
\end{equation}
We see intuitively from \cref{eqn:SimPQR_optimalPivoting} that during each stage, \textit{the user-defined parameter $\gamma$ trades off between selecting the maximum number of patches $S_{max}$ at one extreme $\gamma\to 0$ and achieving the maximum patch volume by picking the largest column on each patch at the other extreme $\gamma \to \infty$.}
The process terminates when all patches have been factored.

The patch-wise index sets $\mathscr{P}_k$ and $\mathscr{P}_k^c$ keep track of the coordinate indices that have been selected and remain to be selected respectively on each patch.
Once the factorization is completed, the shared index lists
\begin{equation}
    \label{eqn:shared_index_lists}
    \mathscr{P} = \bigcup_{k=1}^K \mathscr{P}_k, \qquad \mbox{and} \qquad \mathscr{P}^c = \bigcap_{k=1}^K \mathscr{P}_k^c,
\end{equation}
are formed, where $\mathscr{P}$ contains all of the selected coordinate indices and $\mathscr{P}^c$ contains the remaining unselected coordinate indices shared across all patches.
\Cref{alg:SimPQR} describes this method for computing a SimPQR factorization.
It makes reference to \cref{alg:optimalPivoting} in order to solve the optimization problem \cref{eqn:SimPQR_optimalPivoting} which is examined next in \cref{subsec:optimalPivoting}.
\begin{algorithm}
  \caption{Simultaneously Pivoted Householder QR (SimPQR)}
  \label{alg:SimPQR}
  \begin{algorithmic}
    \STATE{Initialize: $\mathbf{A}_k = \mathbf{U}_k^T$ for $k=1,\ldots,K$}
    \COMMENT{matrix to be over-written}
    \STATE{Initialize: $\mathscr{P}_k = []$, $\mathscr{P}_k^c = [1,\ldots,n]$ for $k=1,\ldots,K$}
    \COMMENT{patch-wise pivot index lists}
    \STATE{Initialize: $i_1 = 1$, $\ldots,$ $i_K = 1$}
    \COMMENT{QR progress on each patch}
    \STATE{$\mathbf{c}_k(j) = \Vert \mathbf{A}_k(:,j) \Vert_2^2$ for all $k=1,\ldots,K$ and $j=1,\ldots,n$}
    \COMMENT{residual column magnitudes}
    \WHILE[proceed while not all patches have been factored]{$\exists k,j$ s.t. $\mathbf{c}_k(j) > \epsilon$}
        \STATE{Compute pivot column $j^*$ and patch index set $\mathscr{S}^*$ using \cref{alg:optimalPivoting}}
        \FORALL[QR on selected patches]{$k\in\mathscr{S}^*$}
            \STATE{$(\mathbf{v},\ \beta) = \house (\mathbf{A}_k(i_k:r,j^*))$}
            \COMMENT{Householder transform to zero entries in piv col}
            \STATE{$\mathbf{A}_k(i_k:r, \mathscr{P}_k^c) \leftarrow (\mathbf{I}_{r-i_k+1} - \beta \mathbf{v}\mathbf{v}^T) \mathbf{A}_k(i_k:r, \mathscr{P}_k^c)$}
            \COMMENT{transform remaining columns}
            \STATE{$\mathbf{A}_k(i_k+1:r,j^*) \leftarrow \mathbf{v}(2:r-i_k+1)$}
            \COMMENT{store vector in zeroed entries of piv col}
            \FOR[loop over remaining columns]{$j\in\mathscr{P}_k^c$}
                \STATE{$\mathbf{c}_k(j) \leftarrow \mathbf{c}_k(j) - \mathbf{A}_k(i_k,j)^2$}
                \COMMENT{update residual magnitudes}
            \ENDFOR
            \STATE{$\mathscr{P}_k \leftarrow \mathscr{P}_k\cup [j^*]$ and $\mathscr{P}_k^c \leftarrow \mathscr{P}_k^c\setminus [j^*]$}
            \COMMENT{update pivot index lists}
            \STATE{$i_k \leftarrow i_k + 1$}
            \COMMENT{update local QR progress}
        \ENDFOR
    \ENDWHILE
    \STATE{$\mathscr{P} = \bigcup_{k=1}^K \mathscr{P}_k$ and $\mathscr{P}^c = \bigcap_{k=1}^K \mathscr{P}_k^c$}
    \COMMENT{form joint index sets}
    \RETURN{$\mathscr{P}$, $\mathscr{P}^c$, and $\mathscr{P}_k$, $\mathscr{P}_k^c$, $\mathbf{A}_k$ for $k=1,\ldots,K$}
  \end{algorithmic}
\end{algorithm}

\begin{remark}
For a description of how $\mathbf{Q}_k$ and $\mathbf{R}_k = \begin{bmatrix}[c|c]
\mathbf{R}_k^{(1)} & \mathbf{R}_k^{(2)}
\end{bmatrix}$ can be obtained from the output of \cref{alg:SimPQR}, see \cref{app:Householder}.
\end{remark}

\subsection{Optimal Coordinate and Patch Selection}
\label{subsec:optimalPivoting}
The optimization problem in \cref{eqn:SimPQR_optimalPivoting} appears nasty at first, but it is actually easy to solve and offers several theoretical guarantees discussed in \cref{subsec:theoreticalGuarantees}.
Let us now consider how to make the optimal selection $j^*$ and $\mathscr{S}^*$ according to \cref{eqn:SimPQR_optimalPivoting}.
Computing $S_{max}$ and $\lbrace C_{k} \rbrace_{k=1}^K$ requires looping once over all $K$ patches for each of the $n$ coordinates; hence $\mathcal{O}(nK)$ operations are performed to compute them.
Now, at fixed $j$ we may solve the optimization problem \cref{eqn:SimPQR_optimalPivoting} over the patch index set $\mathscr{S}$ by sorting the normalized candidate column magnitudes $\boldsymbol{\Theta}(k,j) = \sqrt{\mathbf{c}_k(j) / C_k}$ from greatest to least according to
\begin{equation}
    \boldsymbol{\Theta}(\mathbf{k}_j(1),j) \geq 
    \boldsymbol{\Theta}(\mathbf{k}_j(2),j) \geq \cdots \geq
    \boldsymbol{\Theta}(\mathbf{k}_j(K),j) \geq 0.
\end{equation}
This requires $\mathcal{O}(K\log{K})$ operations per coordinate.
Choosing candidate patch index lists $\mathscr{S}_{l,j} = \lbrace \mathbf{k}_j(1),$ $\ldots,$ $\mathbf{k}_j(l)\rbrace,$ $l=1,\ldots,K$ gives a monotonically decreasing sequence
\begin{equation}
    \left\lbrace \min_{k\in\mathscr{S}_{l,j}} \sqrt{\frac{ \mathbf{c}_k(j)}{C_k}} = \boldsymbol{\Theta}(\mathbf{k}_j(l),j)\right\rbrace_{l=1}^K,
\end{equation}
attaining all possible values of $\min_{k\in\mathscr{S}} \sqrt{\frac{ \mathbf{c}_k(j)}{C_k}}$ across patch index lists $\mathscr{S}$ at fixed $j$.
Furthermore, if $\boldsymbol{\Theta}(\mathbf{k}_j(l),j) > \boldsymbol{\Theta}(\mathbf{k}_j(l+1),j)$ then $\mathscr{S}_{l,j}$ is the largest list of patch indices attaining the normalized column magnitude $\boldsymbol{\Theta}(\mathbf{k}_j(l),j)$ at fixed $j$.
It follows that the optimal list of patch indices $\mathscr{S}_j^*$ at the coordinate $j$ will be among the $\mathscr{S}_{1,j},$ $\ldots,$ $\mathscr{S}_{K,j}$.
We therefore find the optimal patch list index $l^*$ and coordinate index $j^*$ by maximizing the objective in \cref{eqn:SimPQR_optimalPivoting} according to 
\begin{equation}
    \label{eqn:sorted_SimPQR_objective}
    (l^*, j^*) = \argmax_{1\leq l\leq K,\ 1\leq j\leq n:\ \mathbf{c}_{k_{l,j}}(j) > \epsilon} \left( \frac{l}{S_{max}} + \gamma\boldsymbol{\Theta}(\mathbf{k}_j(l), j) \right).
\end{equation}
The optimal list of patch indices using the optimal pivot coordinate is $\mathscr{S}^* = \mathscr{S}_{l^*,j^*}$.
The above is a direct search over all patch-coordinate pairs requiring $\mathcal{O}(n K)$ operations.

The complete process described above for solving \cref{eqn:SimPQR_optimalPivoting} at each stage is given in \cref{alg:optimalPivoting} and requires at total of $\mathcal{O}(nK\log K)$ operations and $\mathcal{O}(n + K)$ additional space.
The Householder QR process at each stage requires $\mathcal{O}( n \vert \mathscr{S}^* \vert r )$ operations, hence,
\begin{equation}
    \mathcal{O}(nK\log K + n \vert \mathscr{S}^* \vert r)
\end{equation}
operations are performed at each stage.
The total number of stages may be as small as $r$ when the chosen coordinates can be shared across all patches or as large as $rK$ when choosing separate coordinates on each patch.
The choice of $\gamma$ will determine the number of stages in SimPQR.
If $\mathbf{U}_k^T$ can be over-written, then the space complexity of performing SimPQR is $\mathcal{O}( n K )$ since it is dominated by storing $\lbrace \mathbf{c}_k \rbrace_{k=1}^K$, otherwise there is an additional factor of $r$ since $\lbrace \mathbf{A}_k \rbrace_{k=1}^K$ must be stored.

\begin{algorithm}
  \caption{Optimal Coordinate and Patch Selection for SimPQR}
  \label{alg:optimalPivoting}
  \begin{algorithmic}
    \STATE{Given: $\mathbf{c}_k$ for $k=1,\ldots,K$}
    \COMMENT{current patch volumes and residuals}
    \STATE{Initialize: $C_{1},\ldots,C_{k}=0$}
    \COMMENT{maximum column magnitudes per patch}
    \STATE{Initialize: $S_{1},\ldots,S_{n}=0$ and $S_{max} = 0$}
    \COMMENT{nonzero patch counts per coordinate}
    \FOR[loop over coordinates]{$j=1,\ldots,n$}
        \FOR[loop over patches]{k=1,\ldots,K}
            \STATE{$C_{k} \leftarrow \max\lbrace \mathbf{c}_k(j),\ C_k \rbrace$}
            \COMMENT{update max column magnitudes}
            \STATE{$S_{j} \leftarrow S_{j} + \mathbbm{1}\lbrace \mathbf{c}_k(j) > \epsilon \rbrace$}
            \COMMENT{update nonzero patch counts}
        \ENDFOR
        \STATE{$S_{max} \leftarrow \max\lbrace S_{j}, S_{max} \rbrace$}
        \COMMENT{update max nonzero patch count}
    \ENDFOR
    \STATE{Initialize: $(J^*,\ j^*,\ \mathscr{S}^*) = (0,\ 0,\ \emptyset)$}
    \FOR[optimize over coordinates]{$j=1,\ldots,n$}
        \STATE{$\mathscr{K} = \argsortDown_{k=1,\ldots,K} (\mathbf{c}_k(j) / C_k)$}
        \COMMENT{sort patch magnitudes from greatest to least}
        \STATE{Initialize: $(J_{col}^*,\ l_{col}^*) = (0,\ 0)$}
        \COMMENT{optimal objective and $\mathscr{K}$ index at fixed $j$}
        \FOR[optimize over sorted patch indices]{$l=1,\ldots,S_j$}
            \STATE{$k = \mathscr{K}(l)$}
            \COMMENT{$l$th sorted patch index}
            \STATE{$J = l/S_{max} + \gamma\cdot \sqrt{\mathbf{c}_k(j)/C_k}$}
            \COMMENT{objective value}
            \IF[update optimal objective and $\mathscr{K}$ index at $j$]{$J > J_{col}^*$} 
                \STATE{$(J_{col}^*,\ l_{col}^*)\leftarrow (J,\ l)$}
            \ENDIF
        \ENDFOR
        \IF[update optimal objective, coordinate, patch set]{$J_{col}^* > J^*$}
            \STATE{$(J^*,\ j^*,\ \mathscr{S}^*) \leftarrow (J_{col}^*,\ j,\ \lbrace \mathscr{K}(1),\ldots,\mathscr{K}(l_{col}^*) \rbrace )$}
        \ENDIF
    \ENDFOR
    \RETURN{$j^*$ and $\mathscr{S}^*$}
    \COMMENT{optimal coordinate and patch index set}
  \end{algorithmic}
\end{algorithm}

\subsection{Parameterized Solution Path for SimPQR}
\label{subsec:GammaPath}
In many applications, we may not know the best value for the parameter $\gamma$ ahead of time.
In these cases it is useful to construct solutions sweeping over a range of values for $\gamma$ and compare the models using cross-validation \cite{Hastie2009}.
Luckily, the SimPQR factorization undergoes a finite number changes as the parameter ranges over $(0, \infty)$.
Furthermore, given a value of the parameter $\gamma$ it is possible to find the closest neighboring values $\gamma^{-} < \gamma$ and $\gamma^{+} > \gamma$ that lead to different sequences pivot and patch indices chosen by optimizing \cref{eqn:SimPQR_optimalPivoting} at each stage of SimPQR.
These neighboring values of the parameter can be found without increasing the order-wise operation count of SimPQR, \cref{alg:SimPQR}.

Recall that the optimal coordinate index $j^*$ and patch index set $\mathscr{S}^* = \mathscr{S}_{l^*, j^*}$ are found by maximizing the objective \cref{eqn:sorted_SimPQR_objective},
\begin{equation}
    (l^*, j^*) = \argmax_{1\leq l\leq K,\ 1\leq j\leq n:\ \mathbf{c}_{\mathbf{k}_j(l)}(j) > \epsilon} J_{l,j}(\gamma), \qquad
    J_{l,j}(\gamma) \triangleq \underbrace{\frac{l}{S_{max}}}_{J^{(1)}_{l,j}} + \gamma\underbrace{\boldsymbol{\Theta}(\mathbf{k}_j(l), j)}_{J^{(2)}_{l,j}},
\end{equation}
over the coordinate indices and sorted patches where $\mathbf{c}_{\mathbf{k}_j(l)}(j) > \epsilon$.
Therefore, for each pair $(l,j)$ with $\mathbf{c}_{\mathbf{k}_j(l)}(j) > \epsilon$ and $J^{(2)}_{l,j} \neq J^{(2)}_{l^*,j^*}$, there exists
\begin{equation}
    \gamma_{l,j} = \frac{J^{(1)}_{l^*,j^*} - J^{(1)}_{l,j}}{J^{(2)}_{l,j} - J^{(2)}_{l^*,j^*}},
\end{equation}
where $J_{l,j}(\gamma_{l,j}) = J_{l^*,j^*}(\gamma_{l,j})$.
During a given stage $i$ of SimPQR,
\begin{equation}
\begin{aligned}
    \gamma^{-}_i &\triangleq \max \lbrace \gamma_{l,j}:\  \gamma_{l,j} < \gamma,\ \mathbf{c}_{\mathbf{k}_j(l)}(j) > \epsilon,\ \mbox{and}\ J^{(2)}_{l,j} \neq J^{(2)}_{l^*,j^*}\rbrace \cup \lbrace -\infty \rbrace \\
    \gamma^{+}_i &\triangleq \min \lbrace \gamma_{l,j}:\  \gamma_{l,j} > \gamma,\ \mathbf{c}_{\mathbf{k}_j(l)}(j) > \epsilon,\ \mbox{and}\ J^{(2)}_{l,j} \neq J^{(2)}_{l^*,j^*}\rbrace \cup \lbrace \infty \rbrace
\end{aligned}
\end{equation}
are the neighboring values of $\gamma$ where taking $\gamma > \gamma^{+}_i$ or $\gamma < \gamma^{-}_i$ will cause a different value of $l^*$ or $j^*$ to be chosen by maximizing \cref{eqn:sorted_SimPQR_objective}.
If SimPQR terminates after $N$ stages, then the neighboring values of the parameter below or above which a change in the selected ordering of coordinates or patches occurs are
\begin{equation}
    \gamma^{-} = \max_{1\leq i\leq N} \gamma^{-}_i \qquad \mbox{and}\qquad
    \gamma^{+} = \min_{1\leq i\leq N} \gamma^{+}_i.
\end{equation}

As will be shown in \cref{thm:gamma_simul}, the smallest value of $\gamma$ that we need to consider is $1/K$. 
Therefore, the entire solution path for SimPQR over the values of $\gamma$ can be found by starting at $\gamma = 1/K$ and computing $\gamma^{+}$ during SimPQR, then performing SimPQR at $\gamma = \gamma^{+} + \delta$, where $\delta$ is sufficiently small, and repeating until $\gamma^+ = \infty$.

\subsection{Theoretical Guarantees for SimPQR}
\label{subsec:theoreticalGuarantees}
The objective function in \cref{eqn:SimPQR_optimalPivoting} uses the parameter $\gamma$ to balance the normalized number of patches $\vert \mathscr{S} \vert / S_{max}$ sharing the $j$th coordinate against the minimum normalized column magnitude $\min_{k\in\mathscr{S}} \sqrt{\mathbf{c}_k(j) / C_k}$ across those patches.
Recall that on each patch the total volume $V_k = \det (\mathbf{\tilde{R}}_k^{(1)})$ is the product of the pivot column magnitudes $\sqrt{\mathbf{c}_k(j^*)}$ at each stage; hence it can be computed by letting $V_k=1$ and updating $V_k \leftarrow V_k \cdot \sqrt{\mathbf{c}_k(j^*)}$ for every $k\in\mathscr{S}^*$ at each stage.
Since maximizing the patch volume forces the singular values of $\boldsymbol{\Pi}_{\mathscr{P}}^T\mathbf{U}_k$ to be large, it is a surrogate for the robustness of the linear reconstruction \cref{eqn:locallyLinearRecon0} to perturbations in the selected states.
\Cref{thm:gamma_bound} shows that when the parameter $\gamma\in(1-1/K,\infty)$, it is directly related to the smallest fraction $\eta$ of maximum volume SimPQR is allowed to attain at each stage in order to share the chosen coordinate across multiple patches.
Allowing the fraction $\eta$ of the maximum volume to become smaller allows SimPQR to share the same coordinate over more patches, thereby choosing fewer coordinates in total.
However, as $\eta\to 0^+$ and $\gamma \to (1-1/K)^+$, the patch volumes may become small and the reconstruction \cref{eqn:locallyLinearRecon0} may not be as robust against perturbations in the selected coordinates.
\begin{theorem}[Parameter Choice and Robustness]
\label{thm:gamma_bound}
Taking $\gamma \geq \frac{K-1}{K(1-\eta)}$ ensures that a minimum column magnitude $\sqrt{\mathbf{c}_k(j^*)} \geq \eta \sqrt{C_k}$ relative to a Businger-Golub choice is achieved for all $k\in\mathscr{S}^*$.
\end{theorem}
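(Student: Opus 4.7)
The plan is to leverage optimality of the SimPQR selection against the simplest non-trivial competitor: a single patch with its own Businger--Golub pivot. For any patch $k$ with $C_k > \epsilon$, the pair $\mathscr{S} = \{k\}$ together with $j = \argmax_{j'}\mathbf{c}_k(j')$ is feasible in \cref{eqn:SimPQR_optimalPivoting} and attains objective value $\tfrac{1}{S_{max}} + \gamma$, since the normalized column magnitude $\sqrt{\mathbf{c}_k(j)/C_k}$ equals $1$ there. Since $(j^*, \mathscr{S}^*)$ is a maximizer, I immediately get
\begin{equation*}
\frac{|\mathscr{S}^*|}{S_{max}} + \gamma\, m \;\geq\; \frac{1}{S_{max}} + \gamma,
\qquad m \triangleq \min_{k\in\mathscr{S}^*}\sqrt{\mathbf{c}_k(j^*)/C_k}.
\end{equation*}

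Next I rearrange to $\gamma(1-m) \leq (|\mathscr{S}^*|-1)/S_{max}$ and split into cases. When $|\mathscr{S}^*|=1$ the right-hand side is zero, forcing $m \geq 1$; combined with the trivial bound $\mathbf{c}_k(j^*) \leq C_k$ this gives $m = 1 \geq \eta$. When $|\mathscr{S}^*| > 1$ the hypothesis on $\gamma$ ensures $\gamma > 0$, so I may divide to obtain
\begin{equation*}
m \;\geq\; 1 - \frac{|\mathscr{S}^*|-1}{\gamma\, S_{max}}.
\end{equation*}

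Finally I apply the uniform bounds $|\mathscr{S}^*|-1 \leq S_{max}-1$ and $S_{max}\leq K$, which together yield $m \geq 1 - (K-1)/(\gamma K)$. Demanding the right-hand side be at least $\eta$ rearranges to exactly $\gamma \geq (K-1)/(K(1-\eta))$, matching the hypothesis; hence $\sqrt{\mathbf{c}_k(j^*)} \geq \eta\sqrt{C_k}$ for every $k\in\mathscr{S}^*$, as required.

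I do not expect any serious obstacle. The only subtlety is verifying that the Businger--Golub witness is genuinely feasible at the current stage: while SimPQR has not terminated, at least one active patch satisfies $C_k > \epsilon$, and the comparison only needs one such $k$. Patches whose columns have already been eliminated play no role in the current maximization. Everything else is a single algebraic manipulation and a worst-case relaxation over $S_{max}$.
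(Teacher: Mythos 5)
Your proof is correct and takes essentially the same approach as the paper's: both compare the optimal objective against a single-patch Businger--Golub witness attaining $\tfrac{1}{S_{max}}+\gamma$, and both close the argument with $|\mathscr{S}^*|\le S_{max}\le K$. You present the same algebra as a direct lower bound on $m=\min_{k\in\mathscr{S}^*}\sqrt{\mathbf{c}_k(j^*)/C_k}$ while the paper frames it as a proof by contradiction, but the substance is identical.
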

\begin{proof}
See \cref{app:Proofs}
\end{proof}

In the case where $\gamma\in(0,1)$, \cref{thm:gamma_simul} puts a lower bound on the number of patches that SimPQR selects simultaneously.
In fact, choosing $\gamma \leq 1/K$ ensures that the algorithm always chooses the maximum number of non-singular patches $S_{max}$ at every stage, thereby minimizing the total number of selected coordinates.
Ties are broken by picking the coordinate the yields the greatest minimum fraction of the volume attained by the Businger-Golub pivot over the patches.
\begin{theorem}[Parameter Choice and Simultaneity]
\label{thm:gamma_simul}
Taking $\gamma \leq 1-\nu$ ensures that the number of patches chosen simultaneously is at least $\vert \mathscr{S}^* \vert > \nu S_{max}$.
Furthermore, taking $\gamma \leq 1/K$ ensures that SimPQR always chooses $\vert \mathscr{S}^* \vert = S_{max}$ patches at each stage.
\end{theorem}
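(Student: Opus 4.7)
The plan is to sandwich the optimal value $J^* = J_{l^*,j^*}(\gamma)$ of the pivoting subproblem \cref{eqn:SimPQR_optimalPivoting} between two elementary expressions and then read off both cardinality bounds directly. The lower bound will come from a particular feasible choice, while the upper bound is obtained generically from the definitions.

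First I would establish a strict lower bound $J^* > 1$ by picking any coordinate $j_0$ with $|\mathscr{S}_{j_0}| = S_{max}$ (which exists by the definition of $S_{max}$) and evaluating the objective at the feasible point $\mathscr{S} = \mathscr{S}_{j_0}$. Because membership in $\mathscr{S}_{j_0}$ forces $\mathbf{c}_k(j_0) > \epsilon > 0$ on every such patch, the inner minimum $\min_{k\in\mathscr{S}_{j_0}}\sqrt{\mathbf{c}_k(j_0)/C_k}$ is strictly positive, which delivers $J^* > 1$ as long as $\gamma > 0$. Next, I would write a matching upper bound that exposes $|\mathscr{S}^*|$, using $\mathbf{c}_k(j) \leq C_k$ to bound $\sqrt{\mathbf{c}_k(j^*)/C_k} \leq 1$ on each patch and concluding $J^* \leq |\mathscr{S}^*|/S_{max} + \gamma$. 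Combining the two bounds yields the master inequality $|\mathscr{S}^*| > S_{max}(1-\gamma)$, from which both claims will fall out by direct substitution.

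The first claim is then immediate: substituting the hypothesis $\gamma \leq 1-\nu$ into the master inequality yields $|\mathscr{S}^*| > \nu S_{max}$. For the second claim, using $S_{max} \leq K$ together with $\gamma \leq 1/K$ turns the master inequality into $|\mathscr{S}^*| > S_{max} - S_{max}/K \geq S_{max} - 1$; since $|\mathscr{S}^*|$ is an integer bounded above by $S_{max}$, this forces $|\mathscr{S}^*| = S_{max}$. The main delicate point is securing the \emph{strict} inequality $J^* > 1$, since both claims of the theorem are genuinely strict (either strict in the real sense, or strict via integer rounding to promote $>S_{max}-1$ into $=S_{max}$); this strictness is not automatic from the $\argmax$ bookkeeping, but comes essentially for free from the $\mathbf{c}_k(j) > \epsilon$ threshold baked into the definition of $\mathscr{S}_j$, so I would be careful to invoke that threshold rather than the weaker $\mathbf{c}_k(j) \geq 0$. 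The edge case $\gamma = 0$ is handled separately but trivially, since then the first term of the objective alone is maximized and forces $|\mathscr{S}^*| = S_{max}$, so both claims still hold (for $\nu < 1$).
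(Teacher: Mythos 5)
Your proposal is correct and takes essentially the same approach as the paper: both establish $J^* > 1$ from the feasible choice $(j_0,\mathscr{S}_{j_0})$ with $|\mathscr{S}_{j_0}| = S_{max}$ and bound the objective above using $\mathbf{c}_k(j)\leq C_k$. Your packaging of the two bounds into the single master inequality $|\mathscr{S}^*| > S_{max}(1-\gamma)$ is a clean streamlining of the paper's two separate contradiction arguments, and you are more explicit than the paper in tracing the strictness of $J^*>1$ back to the $\mathbf{c}_k(j)>\epsilon$ threshold in the definition of $\mathscr{S}_j$.
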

\begin{proof}
See \cref{app:Proofs}
\end{proof}

One may wonder if SimPQR is related to PQR as $\gamma$ becomes large.
Indeed, as $\gamma \to \infty$, \cref{thm:PQR_equivalence} shows that SimPQR eventually becomes equivalent to performing PQR with a Businger-Golub pivoting performed separately on every patch.
This usually results in a large number of selected coordinates that enable maximally robust reconstruction of each patch in the same sense as PQR.
\textit{The user-defined parameter $\gamma>0$ therefore establishes the trade-off between the number of coordinates selected by the algorithm and the robustness of the local reconstruction.}
\begin{lemma}
\label{lem:maximumVolume}
At a given stage $N$ of SimPQR, there exists $\gamma_N < \infty$ so that $\gamma > \gamma_N$ ensures that $\mathbf{c}_k(j^*) = C_k$, hence $j^*$ is a Businger-Golub pivot column index, for all $k\in\mathscr{S}^*$.
\end{lemma}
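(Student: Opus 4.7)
The plan is to exploit the structural bound that the minimum-ratio term $\mu(j,\mathscr{S}) \triangleq \min_{k\in\mathscr{S}}\sqrt{\mathbf{c}_k(j)/C_k}$ appearing in \cref{eqn:SimPQR_optimalPivoting} never exceeds $1$, and to show that the condition ``$j^*$ is a Businger--Golub pivot on every patch of $\mathscr{S}^*$'' is equivalent to this upper bound being attained. Once that equivalence is in place, the lemma reduces to an elementary gap argument, choosing $\gamma$ large enough that the $\gamma\mu$ contribution dominates the bounded variation of the normalized patch-count term $|\mathscr{S}|/S_{\max}\in[1/S_{\max},\,1]$.

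First I would record two elementary observations. (i) For every admissible pair, $\mu(j,\mathscr{S})\leq 1$, with equality if and only if $\mathbf{c}_k(j)=C_k$ for every $k\in\mathscr{S}$, i.e.\ $j$ is a Businger--Golub pivot on each patch of $\mathscr{S}$. (ii) Because stage $N$ is actually reached (some $\mathbf{c}_k(j)>\epsilon$), there exists a patch $k$ with $C_k>0$ together with at least one Businger--Golub pivot index $j$ on that patch; taking $\mathscr{S}=\{k\}$ produces an admissible pair with $\mu=1$, so the upper bound is attained. Hence the set of admissible pairs splits into a nonempty ``Businger--Golub family'' (those with $\mu=1$) and the rest.

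Next I would use the finiteness of the index set to introduce a strict gap. The collection of values $\{\sqrt{\mathbf{c}_k(j)/C_k}\}_{k,j}$ is a finite subset of $[0,1]$, so
\begin{equation*}
\tau \;\triangleq\; \max\Bigl(\bigl\{\sqrt{\mathbf{c}_k(j)/C_k}\ :\ \mathbf{c}_k(j)>\epsilon\bigr\}\cap[0,1)\Bigr)
\end{equation*}
is well defined (taking $\tau=0$ if the set is empty, in which case every admissible pair is already Businger--Golub and the lemma is immediate). For any admissible $(j,\mathscr{S})$ outside the Businger--Golub family, at least one $k\in\mathscr{S}$ has $\sqrt{\mathbf{c}_k(j)/C_k}\leq\tau$, so the full objective is at most $1+\gamma\tau$. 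For any pair inside the family the objective is at least $1/S_{\max}+\gamma$. Demanding the latter to strictly exceed the former yields
\begin{equation*}
\gamma \;>\; \gamma_N \;\triangleq\; \frac{1-1/S_{\max}}{1-\tau},
\end{equation*}
which is finite since $\tau<1$. For such $\gamma$, any optimizer of \cref{eqn:SimPQR_optimalPivoting} must belong to the Businger--Golub family, proving the claim.

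There is no real obstacle here: the content of the argument is entirely in identifying the correct ``second-best'' quantity $\tau$ and converting the strict inequality $\tau<1$ (a consequence of finiteness) into an explicit finite threshold. The only bookkeeping point worth mentioning is the degenerate case in which the two families coincide ($\tau=0$), which is handled trivially and is consistent with the displayed formula for $\gamma_N$.
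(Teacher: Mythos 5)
Your proof is correct and follows essentially the same gap argument as the paper: your $\tau$ is precisely the paper's $\tilde{\Theta}$ (the largest normalized magnitude strictly below $1$), and you compare the lower bound $1/S_{\max}+\gamma$ on the Businger--Golub family against the upper bound $1+\gamma\tau$ on the rest. The only difference is cosmetic — you keep the factor $1-1/S_{\max}$ where the paper uses the cruder uniform bound $1-1/K$, giving a slightly smaller (but equally valid) threshold $\gamma_N$.
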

\begin{proof}
See \cref{app:Proofs}
\end{proof}

\begin{theorem}[PQR Equivalence]
\label{thm:PQR_equivalence}
There exists a $\gamma_D < \infty$ so that taking $\gamma > \gamma_D$ recovers the same coordinates as performing a PQR factorization with Businger-Golub pivoting separately on every patch.
Note that the Businger-Golub pivot column is not unique if there is more than one column with the largest magnitude on a patch.
Taking $\gamma > \gamma_D$ recovers only one such choice at each stage.
\end{theorem}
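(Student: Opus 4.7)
The plan is to combine \cref{lem:maximumVolume} with an induction on the stage index of SimPQR. The key enabling observation is that the algorithm terminates in at most $N \le rK$ stages, since each stage executes at least one Householder reflection on at least one patch and each patch requires exactly $r$ reflections. This finiteness is what will allow us to assemble a single threshold $\gamma_D$ out of the stage-wise thresholds supplied by the lemma.

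For the induction, at stage $m=1$ the residuals $\mathbf{c}_k$ are fixed by the input tangent bases $\mathbf{U}_1,\ldots,\mathbf{U}_K$, so \cref{lem:maximumVolume} directly provides some $\gamma_1 < \infty$ such that for all $\gamma > \gamma_1$, the chosen pivot column $j^*$ satisfies $\mathbf{c}_k(j^*) = C_k$ on every $k \in \mathscr{S}^*$, which is exactly the Businger--Golub condition on each patch in $\mathscr{S}^*$. Assume inductively that a threshold $\bar\gamma_{m-1}$ has been constructed so that for every $\gamma > \bar\gamma_{m-1}$ the SimPQR trajectory through stages $1,\ldots,m-1$ is identical and consists entirely of Businger--Golub selections. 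Then the residuals entering stage $m$ are uniquely determined, and a second application of \cref{lem:maximumVolume} supplies $\gamma_m < \infty$; setting $\bar\gamma_m = \max(\bar\gamma_{m-1},\gamma_m)$ extends the hypothesis to stage $m$. After at most $N \le rK$ such steps one obtains $\gamma_D = \bar\gamma_N < \infty$.

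The equivalence with separate PQR then follows with little further work. For any $\gamma > \gamma_D$, every SimPQR stage picks a column $j^*$ that is a Businger--Golub choice on each $k \in \mathscr{S}^*$, which is precisely the pivot that a standalone Householder PQR on $\mathbf{U}_k^T$ would take at its next local step. Hence the ordered sequence of local pivots accumulated in each $\mathscr{P}_k$ by SimPQR coincides with the Businger--Golub sequence produced by a separate PQR on $\mathbf{U}_k^T$, up to how the stages are interleaved across patches and modulo the tie-breaking ambiguity the statement already flags when multiple columns share the maximum magnitude on a patch.

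The principal obstacle is the apparent circularity in the induction: the threshold $\gamma_m$ depends on the residuals at the start of stage $m$, which depend on the prior trajectory, which in turn depends on $\gamma$. This is resolved by the monotone chain $\bar\gamma_1 \le \bar\gamma_2 \le \cdots$, which freezes the earlier trajectory once $\gamma > \bar\gamma_{m-1}$ and makes $\gamma_m$ well-defined. As a cleaner alternative, one can appeal to the parameterized solution-path analysis of \cref{subsec:GammaPath}: the SimPQR output changes at only finitely many breakpoints in $\gamma \in (0,\infty)$, so the output stabilizes for all $\gamma$ exceeding the largest breakpoint $\gamma^\star < \infty$, and \cref{lem:maximumVolume} applied to the stabilized trajectory identifies every chosen pivot as Businger--Golub, yielding the same conclusion.
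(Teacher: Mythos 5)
Your proposal is correct and follows essentially the same route as the paper: the same invocation of \cref{lem:maximumVolume} at each stage, the same induction on the stage index, and the same observation that SimPQR terminates after at most $rK$ stages, culminating in $\gamma_D = \max\{\gamma_1,\ldots,\gamma_N\}$. Your explicit discussion of the circularity (that $\gamma_m$ depends on residuals which depend on the earlier trajectory which depends on $\gamma$) and its resolution via the monotone chain of thresholds is a slightly more careful rendering of what the paper's proof leaves implicit, and the alternative via the solution-path breakpoints is a valid shortcut that the paper does not take but would also work.
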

\begin{proof}
See \cref{app:Proofs}
\end{proof}

\section{Discovering Embedding Coordinates for Nonlinear Manifolds}
\label{sec:BranchingCoordinates}
In \cref{sec:immersion_coordinates} we developed a technique for identifying primative features that locally parameterize the manifold $\mathcal{M}\subset\mathbb{R}^n$, thereby immersing it via coordinate projection.
The immersing coordinates eliminate the possibility of continuous ambiguities between manifold points by simultaneously paramterizing every tangent plane.
However, the global geometry of $\mathcal{M}$ can still result in an incomplete paramterization using these coordinates.
There may still be discrete ambiguities between points sharing the same immersion coordinates, but lying on different ``branches'' of the manifold.
For example, consider the spiral-shaped manifold discussed in \cref{subapp:SpiralMfd} and shown in \cref{fig:SpiralMfd_ImmersionDiagram}.
Applying the SimPQR algorithm to the tangent planes with any value of $\gamma\in(0,\infty)$ correctly identifies $x_1$ and $x_2$ as optimal immersion coordinates capable of locally parameterizing small patches of the manifold.
However, for fixed $x_1$ and $x_2$ there is a discrete collection of points with different values of $x_3$ that we refer to as lying on different ``branches'' of the manifold. 
% \begin{figure}
%     \centering
%     \includegraphics[width=0.6\linewidth]{}
%     \caption{Synthetic data lying on a 2-dimensional spiral manifold \cref{eqn:SpiralMfd} embedded in $\mathbb{R}^{3}$. The data is colored according to the leading Laplacian eigenfunction. Tangent planes are shown at each training data point.}
%     \label{fig:SpiralMfd_TangentPlanes}
% \end{figure}

In order to resolve these discrete ambiguities, we need to choose additional coordinates that allow for the multiple branches at any given set of immersing coordinates to be distinguished.
In order to make the notion of a ``branch'' concrete, we introduce the set of branches at a point as an equivalence class in the following proposition.
\begin{proposition}
\label{prop:finiteBranches}
If $\mathscr{P}_I$ is a set of immersion coordinate indices, then the equivalence classes
\begin{equation}
    [\mathbf{x}]_{\mathscr{P}_I} = \left\lbrace \mathbf{y}\in\mathcal{M}:\ \boldsymbol{\Pi}_{\mathscr{P}_I}^T(\mathbf{y} - \mathbf{x}) = \mathbf{0} \right\rbrace \in \mathcal{M}/\ker{\left(\boldsymbol{\Pi}_{\mathscr{P}_I}^T\right)}
\end{equation}
each contain a finite set of points lying on different ``branches'' of $\mathcal{M}$ in the coordinates $\mathscr{P}_I$.
\end{proposition}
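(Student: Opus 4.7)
The plan is to use compactness of $\mathcal{M}$ together with the local injectivity property that defines an immersion to show each equivalence class admits a finite open cover in which each open set contributes at most one point.

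First, I would observe that $[\mathbf{x}]_{\mathscr{P}_I}$ is closed in $\mathcal{M}$, since it is the preimage of the point $\boldsymbol{\Pi}_{\mathscr{P}_I}^T \mathbf{x}$ under the continuous map $\mathbf{y} \mapsto \boldsymbol{\Pi}_{\mathscr{P}_I}^T \mathbf{y}$ restricted to $\mathcal{M}$. Because $\mathcal{M}$ is assumed compact, closed subsets of $\mathcal{M}$ are compact, so $[\mathbf{x}]_{\mathscr{P}_I}$ is compact.

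Next, I would invoke the defining property of immersion coordinates established in \cref{sec:immersion_coordinates}: for every $\mathbf{y} \in \mathcal{M}$, $\boldsymbol{\Pi}_{\mathscr{P}_I}^T \mathbf{U}_{\mathbf{y}}$ is left-invertible, and consequently there exists an open neighborhood $\mathcal{U}_{\mathbf{y}} \subseteq \mathcal{M}$ on which $\boldsymbol{\Pi}_{\mathscr{P}_I}^T$ is 1-1. Applying this at every $\mathbf{y} \in [\mathbf{x}]_{\mathscr{P}_I}$ produces an open cover $\{\mathcal{U}_{\mathbf{y}}\}_{\mathbf{y} \in [\mathbf{x}]_{\mathscr{P}_I}}$ of the compact set $[\mathbf{x}]_{\mathscr{P}_I}$, from which compactness extracts a finite subcover $\mathcal{U}_{\mathbf{y}_1}, \ldots, \mathcal{U}_{\mathbf{y}_N}$.

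The concluding step is to note that any two distinct points of $[\mathbf{x}]_{\mathscr{P}_I}$ have, by definition, the same image under $\boldsymbol{\Pi}_{\mathscr{P}_I}^T$, so they cannot both belong to any single $\mathcal{U}_{\mathbf{y}_i}$ on which this projection is injective. Hence each $\mathcal{U}_{\mathbf{y}_i}$ contains at most one element of $[\mathbf{x}]_{\mathscr{P}_I}$, giving $|[\mathbf{x}]_{\mathscr{P}_I}| \leq N < \infty$. The only mild subtlety is verifying that the neighborhoods from the immersion property are genuinely open in $\mathcal{M}$ and that left-invertibility of $\boldsymbol{\Pi}_{\mathscr{P}_I}^T \mathbf{U}_{\mathbf{y}}$ really does yield pointwise (not merely infinitesimal) injectivity on some such neighborhood; this follows from the inverse function theorem applied to the smooth map $\boldsymbol{\Pi}_{\mathscr{P}_I}^T$ restricted to a chart around $\mathbf{y}$, as was already implicitly used in the discussion around \cref{eqn:locallyLinearRecon0}.
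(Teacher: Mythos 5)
Your proof is correct and follows essentially the same argument as the paper: local injectivity of $\boldsymbol{\Pi}_{\mathscr{P}_I}^T$ from the immersion property, a finite subcover by compactness, and a pigeonhole argument that each open set contains at most one representative of the class. The only minor variation is that you extract the finite subcover of the (closed, hence compact) equivalence class $[\mathbf{x}]_{\mathscr{P}_I}$ itself, whereas the paper covers all of $\mathcal{M}$ at once, which additionally yields a single uniform bound $N$ valid for every equivalence class simultaneously.
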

\begin{proof}
See \cref{app:Proofs}.
\end{proof}
Note that if the projection into coordinates $\mathscr{P}_I$ is also an embedding, then $[\mathbf{x}]_{\mathscr{P}_I}$ contains exactly one point, $\mathbf{x}$.
Let
\begin{equation}
    \Delta[\mathbf{x}]_{\mathscr{P}_I} = \left\lbrace \mathbf{u} - \mathbf{v} :\  \mathbf{u},\mathbf{v}\in[\mathbf{x}]_{\mathscr{P}_I},\ \mathbf{u}\neq \mathbf{v} \right\rbrace \subset \ker{\left(\boldsymbol{\Pi}_{\mathscr{P}_I}^T\right)}
\end{equation}
be the set of all vectors separating distinct points in $[\mathbf{x}]_{\mathscr{P}_I}$.
This set is finite, by \cref{prop:finiteBranches}.
Suppose we choose additional sampling coordinates $\mathscr{P}_B\subseteq\mathscr{P}_I^c$ and form the corresponding sampling matrix $\boldsymbol{\Pi}_{\mathscr{P}_B}$ in order to distinguish between the different branches in $[\mathbf{x}]_{\mathscr{P}_I}$.
\Cref{prop:branchRecovery} says that it is possible to recover a specific $\mathbf{x}\in [\mathbf{x}]_{\mathscr{P}_I}$ from the sampled coordinates $\boldsymbol{\Pi}_{\mathscr{P}_B}^T\mathbf{x}$ as long as $\boldsymbol{\Pi}_{\mathscr{P}_B}^T\mathbf{w}$ is nonzero for all $\mathbf{w}\in\Delta[\mathbf{x}]_{\mathscr{P}_I}$.
In other words, if we have the sampled immersion coordinates $\boldsymbol{\Pi}_{\mathscr{P}_I}^T\mathbf{x}$ of an unknown point $\mathbf{x}\in\mathcal{M}$ then we can only determine the equivalence class $[\mathbf{x}]_{\mathscr{P}_I}$ to which it belongs.
If we sample the additional coordinates $\boldsymbol{\Pi}_{\mathscr{P}_B}^T\mathbf{x}$, then we can compare $\boldsymbol{\Pi}_{\mathscr{P}_B}^T\mathbf{x}$ to $\boldsymbol{\Pi}_{\mathscr{P}_B}^T\mathbf{y}$ for each $\mathbf{y}\in[\mathbf{x}]_{\mathscr{P}_I}$.
Since we obtain $\boldsymbol{\Pi}_{\mathscr{P}_B}^T\mathbf{x} = \boldsymbol{\Pi}_{\mathscr{P}_B}^T\mathbf{y}$ if and only if $\mathbf{x} = \mathbf{y}$, the point $\mathbf{x}\in\mathcal{M}$ can be determined uniquely from the coordinates $\boldsymbol{\Pi}_{\mathscr{P}_I}^T\mathbf{x}$ and $\boldsymbol{\Pi}_{\mathscr{P}_B}^T\mathbf{x}$.
Therefore, orthogonal projection into the combined set of coordinates $\mathscr{P}_E = \mathscr{P}_I\cup\mathscr{P}_B$ produces an embedding of $\mathcal{M}$.
\begin{proposition}[Branch Recovery]
\label{prop:branchRecovery}
If $\boldsymbol{\Pi}_{\mathscr{P}_B}^T\mathbf{w}$ is nonzero for all $\mathbf{w}\in\Delta[\mathbf{x}]_{\mathscr{P}_I}$ then
\begin{equation*}
    \mathbf{y} = \mathbf{z} \ \Longleftrightarrow \  
    \boldsymbol{\Pi}_{\mathscr{P}_B}^T(\mathbf{y} - \mathbf{z}) = \mathbf{0}, 
    \qquad \forall \mathbf{y},\mathbf{z}\in[\mathbf{x}]_{\mathscr{P}_I}.
\end{equation*}
\end{proposition}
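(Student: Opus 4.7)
The plan is to prove the two implications separately, noting that the forward direction is immediate from linearity while the reverse direction follows almost directly from the definition of $\Delta[\mathbf{x}]_{\mathscr{P}_I}$ combined with the hypothesis. This is essentially an unpacking of definitions, so I would keep the argument short.

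First, I would dispose of the $(\Rightarrow)$ direction: if $\mathbf{y} = \mathbf{z}$, then $\mathbf{y} - \mathbf{z} = \mathbf{0}$, so certainly $\boldsymbol{\Pi}_{\mathscr{P}_B}^T(\mathbf{y}-\mathbf{z}) = \mathbf{0}$. No geometric assumption on $\boldsymbol{\Pi}_{\mathscr{P}_B}$ is needed here.

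The content is in the $(\Leftarrow)$ direction, which I would prove by contraposition. Suppose for contradiction that $\mathbf{y},\mathbf{z}\in[\mathbf{x}]_{\mathscr{P}_I}$ with $\mathbf{y}\neq \mathbf{z}$, yet $\boldsymbol{\Pi}_{\mathscr{P}_B}^T(\mathbf{y}-\mathbf{z}) = \mathbf{0}$. By definition of $\Delta[\mathbf{x}]_{\mathscr{P}_I}$, the vector $\mathbf{w} \triangleq \mathbf{y}-\mathbf{z}$ lies in $\Delta[\mathbf{x}]_{\mathscr{P}_I}$ (since it is the difference of two distinct members of $[\mathbf{x}]_{\mathscr{P}_I}$). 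The hypothesis guarantees $\boldsymbol{\Pi}_{\mathscr{P}_B}^T\mathbf{w}\neq \mathbf{0}$, contradicting our assumption. Hence $\mathbf{y}=\mathbf{z}$.

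There is no real obstacle here; the proposition is essentially a restatement of the hypothesis in equivalence-class language. The only thing worth being careful about is the requirement $\mathbf{u}\neq \mathbf{v}$ in the definition of $\Delta[\mathbf{x}]_{\mathscr{P}_I}$: the set of separating vectors excludes $\mathbf{0}$, which is exactly why the hypothesis can consistently require $\boldsymbol{\Pi}_{\mathscr{P}_B}^T\mathbf{w}\neq\mathbf{0}$ for every $\mathbf{w}\in\Delta[\mathbf{x}]_{\mathscr{P}_I}$ and still be compatible with the reflexive case $\mathbf{y}=\mathbf{z}$ handled in the forward direction. I would make this explicit in one sentence when invoking the hypothesis, to justify why $\mathbf{w}$ qualifies as an element of $\Delta[\mathbf{x}]_{\mathscr{P}_I}$ rather than being the zero vector.
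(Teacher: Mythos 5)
Your proof is correct and takes essentially the same approach as the paper's: the forward direction is immediate, and the reverse direction follows by observing that any distinct $\mathbf{y},\mathbf{z}\in[\mathbf{x}]_{\mathscr{P}_I}$ give a nonzero separating vector $\mathbf{y}-\mathbf{z}\in\Delta[\mathbf{x}]_{\mathscr{P}_I}$, on which the hypothesis forbids $\boldsymbol{\Pi}_{\mathscr{P}_B}^T$ from vanishing. Your extra remark about why the exclusion $\mathbf{u}\neq\mathbf{v}$ in the definition of $\Delta[\mathbf{x}]_{\mathscr{P}_I}$ matters is a sound, if minor, clarification that the paper leaves implicit.
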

\begin{proof}
If $\mathbf{y} = \mathbf{z}$ then $\boldsymbol{\Pi}_{\mathscr{P}_B}^T(\mathbf{y} - \mathbf{z}) = \mathbf{0}$ is trivial.
If $\mathbf{y} \neq \mathbf{z}$ then $\mathbf{y} - \mathbf{z}$ is an element of $\Delta[\mathbf{x}]_{\mathscr{P}_I}$.
It follows that $\boldsymbol{\Pi}_{\mathscr{P}_B}^T(\mathbf{y} - \mathbf{z}) \neq \mathbf{0}$.
\end{proof}
% \begin{proposition}[Branch Recovery]
% \label{prop:branchRecovery1}
% If $\boldsymbol{\Pi}_{\mathscr{P}_B}^T\mathbf{\hat{w}}$ is nonzero for all $\mathbf{\hat{w}}\in\Delta[\mathbf{x}]_{\mathscr{P}_I}$ then
% \begin{equation*}
%     \Vert \mathbf{y} - \mathbf{z} \Vert \leq 
%     \frac{1}{c}\ \Vert \boldsymbol{\Pi}_{\mathscr{P}_B}^T(\mathbf{y} - \mathbf{z}) \Vert, \qquad
%     \forall \mathbf{y},\mathbf{z}\in[\mathbf{x}]_{\mathscr{P}_I},
% \end{equation*}
% where the Lipshitz constant is determined by
% \begin{equation*}
%     c = \inf \left\lbrace \Vert \boldsymbol{\Pi}_{\mathscr{P}_B}^T\mathbf{\hat{w}} \Vert :\ 
%     \mathbf{\hat{w}}\in\Delta[\mathbf{x}]_{\mathscr{P}_I} \right\rbrace.
% \end{equation*}
% It follows that
% \begin{equation*}
%     \mathbf{y} = \mathbf{z} \ \Longleftrightarrow \  
%     \boldsymbol{\Pi}_{\mathscr{P}_B}^T(\mathbf{y} - \mathbf{z}) = \mathbf{0}, 
%     \qquad \forall \mathbf{y},\mathbf{z}\in[\mathbf{x}]_{\mathscr{P}_I}.
% \end{equation*}
% \end{proposition}
% \begin{proof}
% By definition of $c$ and $\Delta[\mathbf{x}]_{\mathscr{P}_I}$, we have
% \begin{equation*}
%     c \leq \frac{\Vert \boldsymbol{\Pi}_{\mathscr{P}_B}^T(\mathbf{y} - \mathbf{z}) \Vert}{\Vert \mathbf{y} - \mathbf{z} \Vert}, 
%     \qquad \forall \mathbf{y},\mathbf{z}\in[\mathbf{x}]_{\mathscr{P}_I}.
% \end{equation*}
% Re-arranging produces the desired Lipshitz condition and the final result is trivial.
% \end{proof}

The size of vectors in $\Delta[\mathbf{x}]_{\mathscr{P}_I}$ upon projection into the branching coordinates $\mathscr{P}_B$ determines the maximum noise level below which branch recovery is possible.
\Cref{prop:branchRecoveryRobustness} shows that disturbances below this level do not change the nearest branch point in the sampled coordinates, allowing perturbed points lying near different branches to be distinguished.
\begin{proposition}[Branch Recovery Robustness]
\label{prop:branchRecoveryRobustness}
For all distinct $\mathbf{y},\mathbf{z}\in[\mathbf{x}]_{\mathscr{P}_I}$, we have
\begin{equation*}
    \Vert \boldsymbol{\Pi}_{\mathscr{P}_B}^T(\mathbf{y} + \mathbf{n}) - \boldsymbol{\Pi}_{\mathscr{P}_B}^T\mathbf{y} \Vert 
    < \Vert \boldsymbol{\Pi}_{\mathscr{P}_B}^T(\mathbf{y} + \mathbf{n}) - \boldsymbol{\Pi}_{\mathscr{P}_B}^T\mathbf{z} \Vert
\end{equation*}
if the disturbance is bounded by
\begin{equation*}
    \Vert \mathbf{n}\Vert < \frac{1}{2} \Vert \boldsymbol{\Pi}_{\mathscr{P}_B}^T(\mathbf{y} - \mathbf{z}) \Vert.
\end{equation*}
Moreover, this bound on the disturbance is tight.
\end{proposition}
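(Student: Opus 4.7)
The plan is to reduce the two norms in the claimed inequality to expressions involving only $\boldsymbol{\Pi}_{\mathscr{P}_B}^T\mathbf{n}$ and $\boldsymbol{\Pi}_{\mathscr{P}_B}^T(\mathbf{y}-\mathbf{z})$, then apply the reverse triangle inequality and the fact that coordinate projection is a contraction.

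First I would simplify. The left-hand side collapses immediately to
\begin{equation*}
    \Vert \boldsymbol{\Pi}_{\mathscr{P}_B}^T(\mathbf{y}+\mathbf{n}) - \boldsymbol{\Pi}_{\mathscr{P}_B}^T\mathbf{y}\Vert = \Vert \boldsymbol{\Pi}_{\mathscr{P}_B}^T\mathbf{n}\Vert,
\end{equation*}
and the right-hand side rewrites as $\Vert \boldsymbol{\Pi}_{\mathscr{P}_B}^T(\mathbf{y}-\mathbf{z}) + \boldsymbol{\Pi}_{\mathscr{P}_B}^T\mathbf{n}\Vert$. The reverse triangle inequality gives
\begin{equation*}
    \Vert \boldsymbol{\Pi}_{\mathscr{P}_B}^T(\mathbf{y}-\mathbf{z}) + \boldsymbol{\Pi}_{\mathscr{P}_B}^T\mathbf{n}\Vert \geq \Vert \boldsymbol{\Pi}_{\mathscr{P}_B}^T(\mathbf{y}-\mathbf{z})\Vert - \Vert \boldsymbol{\Pi}_{\mathscr{P}_B}^T\mathbf{n}\Vert.
\end{equation*}
Combining these, the claimed strict inequality holds provided $2\Vert \boldsymbol{\Pi}_{\mathscr{P}_B}^T\mathbf{n}\Vert < \Vert \boldsymbol{\Pi}_{\mathscr{P}_B}^T(\mathbf{y}-\mathbf{z})\Vert$. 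Since $\boldsymbol{\Pi}_{\mathscr{P}_B}^T$ is a coordinate sampling matrix with operator norm one, $\Vert \boldsymbol{\Pi}_{\mathscr{P}_B}^T\mathbf{n}\Vert \leq \Vert \mathbf{n}\Vert$, and the hypothesis $\Vert \mathbf{n}\Vert < \tfrac{1}{2}\Vert \boldsymbol{\Pi}_{\mathscr{P}_B}^T(\mathbf{y}-\mathbf{z})\Vert$ then gives the result.

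For tightness, I would exhibit a disturbance that achieves equality at the boundary. Setting
\begin{equation*}
    \mathbf{n} = \tfrac{1}{2}\,\boldsymbol{\Pi}_{\mathscr{P}_B}\boldsymbol{\Pi}_{\mathscr{P}_B}^T(\mathbf{z}-\mathbf{y}),
\end{equation*}
which lies in $\Range(\boldsymbol{\Pi}_{\mathscr{P}_B})$, one uses $\boldsymbol{\Pi}_{\mathscr{P}_B}^T\boldsymbol{\Pi}_{\mathscr{P}_B} = \mathbf{I}$ to compute $\Vert \mathbf{n}\Vert = \tfrac{1}{2}\Vert \boldsymbol{\Pi}_{\mathscr{P}_B}^T(\mathbf{y}-\mathbf{z})\Vert$ and $\boldsymbol{\Pi}_{\mathscr{P}_B}^T\mathbf{n} = \tfrac{1}{2}\boldsymbol{\Pi}_{\mathscr{P}_B}^T(\mathbf{z}-\mathbf{y})$. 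Plugging in shows that both sides of the proposition's inequality equal $\tfrac{1}{2}\Vert \boldsymbol{\Pi}_{\mathscr{P}_B}^T(\mathbf{y}-\mathbf{z})\Vert$, so the strict inequality is violated. This demonstrates that the bound $\tfrac{1}{2}\Vert \boldsymbol{\Pi}_{\mathscr{P}_B}^T(\mathbf{y}-\mathbf{z})\Vert$ cannot be enlarged.

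There is no serious obstacle here; the only subtlety is making sure to use $\Vert \boldsymbol{\Pi}_{\mathscr{P}_B}^T\mathbf{n}\Vert \leq \Vert \mathbf{n}\Vert$ rather than the stronger but unavailable equality (this explains why tightness requires choosing $\mathbf{n}$ entirely supported on the selected coordinates, and conversely why, absent such a choice, the hypothesis has slack). Apart from that, everything follows from two applications of the triangle inequality and a verification of the boundary case.
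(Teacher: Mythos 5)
Your proof is correct and takes essentially the same approach as the paper's: both arguments are applications of the triangle inequality together with the contraction property $\Vert\boldsymbol{\Pi}_{\mathscr{P}_B}^T\mathbf{n}\Vert\leq\Vert\mathbf{n}\Vert$, differing only in whether the inequality is presented as the forward or reverse form, and both exhibit tightness via exactly the same disturbance $\mathbf{n}=\tfrac{1}{2}\boldsymbol{\Pi}_{\mathscr{P}_B}\boldsymbol{\Pi}_{\mathscr{P}_B}^T(\mathbf{z}-\mathbf{y})$.
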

\begin{proof}
See \cref{app:Proofs}
\end{proof}
Therefore, we would like to make $\Vert \boldsymbol{\Pi}_{\mathscr{P}_B}^T\mathbf{w} \Vert$ as large as possible for every $\mathbf{w}\in\Delta[\mathbf{x}]_{\mathscr{P}_I}$.
This will ensure that any $\mathbf{x},\mathbf{y}\in [\mathbf{x}]_{\mathscr{P}_I}$, where $\mathbf{x}\neq \mathbf{y}$, can be robustly distinguished from each other by sampling the branching coordinates $\boldsymbol{\Pi}_{\mathscr{P}_B}^T\mathbf{x}$ and $\boldsymbol{\Pi}_{\mathscr{P}_B}^T\mathbf{y}$.

Observing that $\Vert\boldsymbol{\Pi}_{\mathscr{P}_B}^T\mathbf{w}\Vert \geq \Vol{(\boldsymbol{\Pi}_{\mathscr{P}_B}^T\mathbf{w})}$, we draw a connection with the SimPQR factorization \cref{eqn:SimPQR} used to identify the immersion coordinates. 
If the total number of branches is not too large, then we can choose a collection of points $\mathbf{\bar{x}}_1,$ $\ldots,$ $\mathbf{\bar{x}}_K\in\mathcal{M}$ and perform SimPQR:
\begin{equation}
\begin{bmatrix}
\mathbf{w}_1^T \\
\vdots \\
\mathbf{w}_L^T
\end{bmatrix}
\begin{bmatrix}[c|c]
\boldsymbol{\Pi}_{\mathscr{P}_B} & \boldsymbol{\Pi}_{\mathscr{P}_B^c} \end{bmatrix} = 
\begin{bmatrix}
q_1 & & \\
& \ddots & \\
& & q_L
\end{bmatrix}
\begin{bmatrix}[c|c]
\mathbf{r}_1^{(1)} & \mathbf{r}_1^{(2)} \\
\vdots & \vdots \\
\mathbf{r}_L^{(1)} & \mathbf{r}_L^{(2)}
\end{bmatrix}, \qquad 
\left\lbrace\mathbf{w}_l \right\rbrace_{l=1}^L \triangleq \bigcup_{k=1}^K \Delta[\mathbf{\bar{x}}_k]_{\mathscr{P}_I}.
\label{eqn:SimPQR_branch_comb}
\end{equation}
This method will be preferable when there are a small number of discrete branches of the underlying manifold in the chosen set of immersion coordinates.

The number of elements in $\Delta[\mathbf{x}]_{\mathscr{P}_I}$ grows with the square of the number of branches so that performing SimPQR on $\bigcup_{k=1}^K \Delta[\mathbf{\bar{x}}_k]_{\mathscr{P}_I}$ may become intractable as the number of branches becomes large.
\Cref{prop:StrongerBranchRecovery} says that we can achieve the same branch recovery guarantee as \cref{prop:branchRecovery} (with at least as many coordinates) by finding orthonormal matrices $\lbrace \mathbf{\tilde{U}}_{\mathbf{x}}\rbrace_{\mathbf{x}\in\mathcal{M}}$ spanning the collections branch separating vectors $\Range(\mathbf{\tilde{U}}_{\mathbf{x}}) = \vspan{\left\lbrace\Delta[\mathbf{x}]_{\mathscr{P}_I}\right\rbrace}$
and choosing a set of coordinates $\mathscr{P}_B$ so that each $\boldsymbol{\Pi}_{\mathscr{P}_B}^T\mathbf{\tilde{U}}_{\mathbf{x}}$ is left-invertible.
More robust branch recovery is possible by making each $\Vol{(\boldsymbol{\Pi}_{\mathscr{P}_B}^T\mathbf{\tilde{U}}_{\mathbf{x}})}$ as large as possible.
This can be achieved by performing SimPQR on orthonormal matrices $\mathbf{\tilde{U}}_1,$ $\ldots,$ $\mathbf{\tilde{U}}_K$ spanning the branch separating vectors at the collection of points $\mathbf{\bar{x}}_1,$ $\ldots,$ $\mathbf{\bar{x}}_K\in\mathcal{M}$.
\begin{proposition}[Strong Branch Recovery]
\label{prop:StrongerBranchRecovery}
If $\Range(\mathbf{\tilde{U}}_{\mathbf{x}}) = \vspan{\left\lbrace\Delta[\mathbf{x}]_{\mathscr{P}_I}\right\rbrace}$ and \\ $\Vol{(\boldsymbol{\Pi}_{\mathscr{P}_B}^T\mathbf{\tilde{U}}_{\mathbf{x}})} > 0$, then
\begin{equation*}
    \Vert \boldsymbol{\Pi}_{\mathscr{P}_B}^T\mathbf{w}\Vert \geq
    \Vol{\left(\boldsymbol{\Pi}_{\mathscr{P}_B}^T\mathbf{\tilde{U}}_{\mathbf{x}}\right)} \Vert \mathbf{w}\Vert > 0,
    \qquad \forall \mathbf{w}\in\Delta[\mathbf{x}]_{\mathscr{P}_I},
\end{equation*}
and the condition of \Cref{prop:branchRecovery} is satisfied.
Furthermore, distinct $\mathbf{y},\mathbf{z}\in[\mathbf{x}]_{\mathscr{P}_I}$ can be robustly distinguished using the selected coordinates in the sense of \cref{prop:branchRecoveryRobustness} when the disturbance is bounded by 
\begin{equation*}
    \Vert \mathbf{n} \Vert < \frac{1}{2} \Vol{\left(\boldsymbol{\Pi}_{\mathscr{P}_B}^T\mathbf{\tilde{U}}_{\mathbf{x}}\right)} \Vert \mathbf{y} - \mathbf{z}\Vert.
\end{equation*}
\end{proposition}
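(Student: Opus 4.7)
The plan is to reduce both conclusions to the already-established \cref{prop:volumeSurrogate,prop:branchRecovery,prop:branchRecoveryRobustness}. The central observation is that every $\mathbf{w}\in\Delta[\mathbf{x}]_{\mathscr{P}_I}$ lies in $\vspan{\left\lbrace\Delta[\mathbf{x}]_{\mathscr{P}_I}\right\rbrace} = \Range(\mathbf{\tilde{U}}_{\mathbf{x}})$, so we can write $\mathbf{w} = \mathbf{\tilde{U}}_{\mathbf{x}}\mathbf{a}$ for a unique coefficient vector $\mathbf{a}$. Orthonormality of $\mathbf{\tilde{U}}_{\mathbf{x}}$ yields $\Vert \mathbf{w}\Vert = \Vert\mathbf{a}\Vert$, which turns the projected quantity into $\boldsymbol{\Pi}_{\mathscr{P}_B}^T\mathbf{w} = \bigl(\boldsymbol{\Pi}_{\mathscr{P}_B}^T\mathbf{\tilde{U}}_{\mathbf{x}}\bigr)\mathbf{a}$.

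Next I would bound this quantity from below by the smallest singular value, writing
\begin{equation*}
\Vert \boldsymbol{\Pi}_{\mathscr{P}_B}^T\mathbf{w}\Vert
= \Vert \bigl(\boldsymbol{\Pi}_{\mathscr{P}_B}^T\mathbf{\tilde{U}}_{\mathbf{x}}\bigr)\mathbf{a}\Vert
\geq \sigma_{\min}\bigl(\boldsymbol{\Pi}_{\mathscr{P}_B}^T\mathbf{\tilde{U}}_{\mathbf{x}}\bigr) \Vert \mathbf{a}\Vert,
\end{equation*}
and then invoke \cref{prop:volumeSurrogate} to replace $\sigma_{\min}\bigl(\boldsymbol{\Pi}_{\mathscr{P}_B}^T\mathbf{\tilde{U}}_{\mathbf{x}}\bigr)$ by the lower bound $\Vol\bigl(\boldsymbol{\Pi}_{\mathscr{P}_B}^T\mathbf{\tilde{U}}_{\mathbf{x}}\bigr)$. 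Combining with $\Vert \mathbf{a}\Vert = \Vert \mathbf{w}\Vert$ gives the main inequality. Strict positivity follows because $\mathbf{w}\in\Delta[\mathbf{x}]_{\mathscr{P}_I}$ is a difference of distinct points, so $\mathbf{w}\neq\mathbf{0}$, and the hypothesis $\Vol\bigl(\boldsymbol{\Pi}_{\mathscr{P}_B}^T\mathbf{\tilde{U}}_{\mathbf{x}}\bigr)>0$ is used here. Since $\boldsymbol{\Pi}_{\mathscr{P}_B}^T\mathbf{w}$ is therefore nonzero for every $\mathbf{w}\in\Delta[\mathbf{x}]_{\mathscr{P}_I}$, the hypothesis of \cref{prop:branchRecovery} holds.

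For the robustness conclusion, I would simply chain the inequalities. Given distinct $\mathbf{y},\mathbf{z}\in[\mathbf{x}]_{\mathscr{P}_I}$, we have $\mathbf{y}-\mathbf{z}\in\Delta[\mathbf{x}]_{\mathscr{P}_I}$, so applying the inequality just proved gives
\begin{equation*}
\tfrac{1}{2}\Vert \boldsymbol{\Pi}_{\mathscr{P}_B}^T(\mathbf{y}-\mathbf{z})\Vert
\geq \tfrac{1}{2}\Vol\bigl(\boldsymbol{\Pi}_{\mathscr{P}_B}^T\mathbf{\tilde{U}}_{\mathbf{x}}\bigr)\Vert \mathbf{y}-\mathbf{z}\Vert.
\end{equation*}
Thus any disturbance satisfying the stated bound also satisfies the tighter bound in \cref{prop:branchRecoveryRobustness}, which yields the desired robust distinguishability.

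There is no serious obstacle: each step is a one-line invocation of an earlier result, and the only conceptual work is in recognizing that $\mathbf{w}\in\Range(\mathbf{\tilde{U}}_{\mathbf{x}})$ lets us pass from an arbitrary secant to a bound involving only the fixed matrix $\boldsymbol{\Pi}_{\mathscr{P}_B}^T\mathbf{\tilde{U}}_{\mathbf{x}}$. The only subtlety worth being careful about is that the singular-value-to-volume bound in \cref{prop:volumeSurrogate} is applied to $\boldsymbol{\Pi}_{\mathscr{P}_B}^T\mathbf{\tilde{U}}_{\mathbf{x}}$, whose smallest singular value is positive precisely because the hypothesis demands positive volume; nothing about the specific structure of $\Delta[\mathbf{x}]_{\mathscr{P}_I}$ beyond the span condition is needed.
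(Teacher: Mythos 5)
Your proposal is correct and follows essentially the same route as the paper's proof: you write $\mathbf{w}=\mathbf{\tilde{U}}_{\mathbf{x}}\mathbf{a}$ and bound via the smallest singular value $\sigma_{\min}\bigl(\boldsymbol{\Pi}_{\mathscr{P}_B}^T\mathbf{\tilde{U}}_{\mathbf{x}}\bigr)\geq\Vol\bigl(\boldsymbol{\Pi}_{\mathscr{P}_B}^T\mathbf{\tilde{U}}_{\mathbf{x}}\bigr)$, while the paper phrases the identical bound through the pseudoinverse identity $\Vert(\boldsymbol{\Pi}_{\mathscr{P}_B}^T\mathbf{\tilde{U}}_{\mathbf{x}})^{+}\Vert = 1/\sigma_{\min}$. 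The only cosmetic difference is that you explicitly cite \cref{prop:volumeSurrogate} and spell out the ``trivial'' disturbance chain, which the paper leaves to the reader.
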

\begin{proof}
See \cref{app:Proofs}.
\end{proof}
% \begin{proposition}[Strong Branch Recovery]
% \label{prop:StrongerBranchRecovery}
% If $\Range(\mathbf{\tilde{U}}_{\mathbf{x}}) = \vspan{\left\lbrace\Delta[\mathbf{x}]_{\mathscr{P}_I}\right\rbrace}$ and $\Vol{\left(\boldsymbol{\Pi}_{\mathscr{P}_B}^T\mathbf{\tilde{U}}_{\mathbf{x}}\right)} \\ > 0$, then $\boldsymbol{\Pi}_{\mathscr{P}_B}^T\mathbf{w}$ is nonzero for all $\mathbf{w}\in\Delta[\mathbf{x}]_{\mathscr{P}_I}$ and the condition of \Cref{prop:branchRecovery} is satisfied.
% \end{proposition}
% \begin{proof}
% Choose a nonzero $\mathbf{w}\in\Delta[\mathbf{x}]_{\mathscr{P}_I}$ then $\mathbf{w} = \mathbf{\tilde{U}}_{\mathbf{x}}\mathbf{a}$ for some $\mathbf{a}\neq\mathbf{0}$.
% $\Vol{\left(\boldsymbol{\Pi}_{\mathscr{P}_B}^T\mathbf{\tilde{U}}_{\mathbf{x}}\right)} > 0$ means that $\boldsymbol{\Pi}_{\mathscr{P}_B}^T\mathbf{\tilde{U}}_{\mathbf{x}}$ is left-invertible and hence has full column rank.
% Therefore, $\boldsymbol{\Pi}_{\mathscr{P}_B}^T\mathbf{w} = \boldsymbol{\Pi}_{\mathscr{P}_B}^T\mathbf{\tilde{U}}_{\mathbf{x}}\mathbf{a}\neq\mathbf{0}$ since the null space of $\boldsymbol{\Pi}_{\mathscr{P}_B}^T\mathbf{\tilde{U}}_{\mathbf{x}}$ contains only $\mathbf{0}$ and $\mathbf{a}\neq\mathbf{0}$.
% \end{proof}
\begin{figure}
    \centering
    \subfloat[]{
    \includegraphics[width=0.4\linewidth]{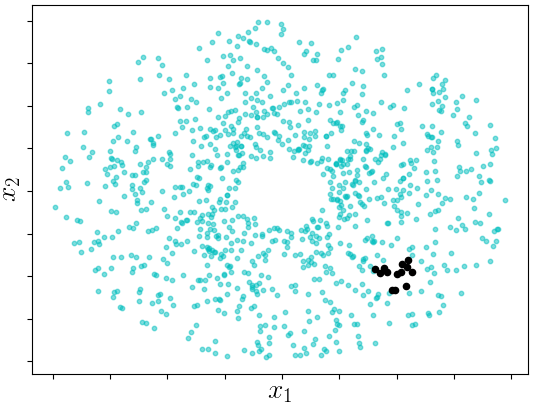}
    \label{fig:CoordinateNhd_2D}
    }
    \hfill
    \subfloat[]{
    \includegraphics[width=0.45\linewidth]{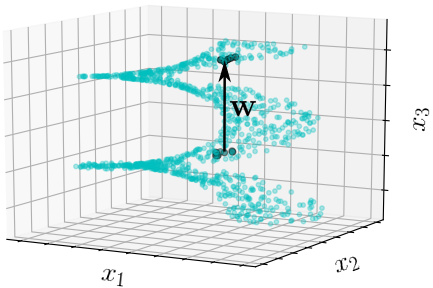}
    \label{fig:CoordinateNhd_3D}
    }
    \caption{A collection of neighboring points are found in the immersing DEIM coordinates and are observed to lie on different branches of the manifold. The branch separating vector between the cluster centers is shown. Thus, we need to add another coordinate to resolve this ambiguity.}
    \label{fig:CoordinateNhd}
\end{figure}

In practice, we can estimate $[\mathbf{\bar{x}}_k]_{\mathscr{P}_I}$ by choosing a collection of points that are close to $\mathbf{\bar{x}}_k$ in the $\mathscr{P}_I$-coordinates as in \cref{fig:CoordinateNhd_2D} and locating the corresponding cluster centroids in the full space as in \cref{fig:CoordinateNhd_3D}.
Then one may construct $\Delta[\mathbf{\bar{x}}_k]_{\mathscr{P}_I}$ by taking the differences between all the centroids in $[\mathbf{\bar{x}}_k]_{\mathscr{P}_I}$.
Alternatively, we can construct $\mathbf{\tilde{U}}_k$ simply by performing PCA on the original collection of points shown in \cref{fig:CoordinateNhd_3D}, truncating the small singular values corresponding to the width of the neighborhood.
This is much easier and effective method when there is a single branching direction as in the spiral manifold shown in \cref{fig:CoordinateNhd}.

\section{Applications}
\label{sec:applications}
\subsection{Sampling Locations in Cylinder Wake Flow}
\label{subsec:CylinderWakeSampling}

DEIM has been employed in a variety of applications to identify optimal spatial locations for sensor placement and reduced order modeling of PDEs \cite{Sargsyan2015nonlinear}.
The main idea is to find a small collection of sampling locations in physical space that can be used to reconstruct the entire solution or the nonlinear part of its time derivative.
The sparse sampling and reconstruction achieved by DEIM can be used in conjunction with a variety of methods including standard Galerkin projection and least-squares Petrov-Galerkin projection \cite{Carlberg2011efficient} to provide dimensionality reduction for modeling PDEs.
However, reduced order models employing DEIM are limited by the assumption that the solutions lie in a low-dimensional subspace that is often determined using POD to capture some large fraction of the solution's variance.
Nonlinear DEIM enables sparse sampling and reconstruction on nonlinear manifolds, opening up the possibility of further dimensionality reduction for systems evolving on such manifolds.

For example, consider the flow behind a cylinder at Reynolds number $Re = 60$ which has an unstable equilibrium shown in \cref{fig:unstableEquilibrium} and a stable limit cycle that forms the K\'{a}rm\'{a}n vortex sheet downstream of the cylinder shown in \cref{fig:stableLimitCycle}.
The same simulation data as in \cite{Otto2019linearly}, consisting of $2000$ snapshots taken at regular intervals $\Delta t = 0.2 D/U_{\infty}$, where $D$ is the cylinder diameter and $U_{\infty}$ is the free-stream flow velocity, will be used in this example.
The data was stored using the leading $200$ POD modes that capture essentially all of the variance.
Evenly numbered data points were assigned to the training data set and oddly numbered points were assigned to the testing data set.
We consider the vorticity field $\omega(\xi_1, \xi_2)$ on a down-sampled spatial grid consisting of $267$ points $-3.06 \leq \xi_1\leq 28.86$ in the streamwise direction and $67$ points $-3.96 \leq \xi_2\leq 3.96$ in the transverse direction, yielding a state vector containing the sampled vorticities $\mathbf{x}$ of dimension $n=17889$.
\begin{figure}
  \centering
  \subfloat[]{
    \includegraphics[width=0.47\textwidth]{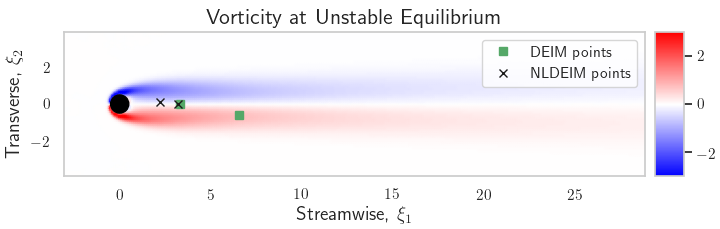}
    \label{fig:unstableEquilibrium}
  }
  \hfill
  \subfloat[]{
    \includegraphics[width=0.47\textwidth]{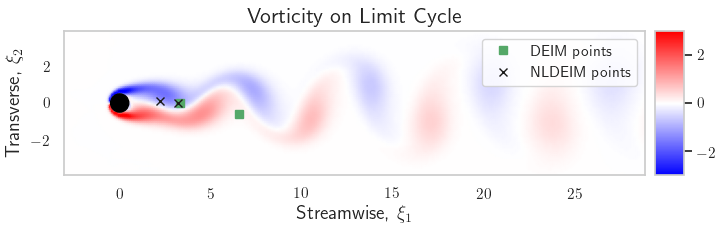}
    \label{fig:stableLimitCycle}
  }
  \label{fig:cylinderWake_exampleSnapshots}
  \caption{Cylinder wake flow snapshots at the unstable equilibrium and on the stable limit cycle. The spatial sampling locations chosen by the DEIM and nonlinear DEIM methods are also shown.}
\end{figure}

The original DEIM relies on a linear reconstruction of the state \cref{eqn:linear_DEIM_recon} whose error \cref{eqn:linear_DEIM_recon_error} scales with the amount of variance not captured by the POD subspace since
\begin{equation}
    \Expectation \Vert \boldsymbol{\Pi}_{\mathscr{P}}^T\mathbf{n} \Vert_2^2 \leq \Expectation \Vert \mathbf{n} \Vert_2^2 = \sum_{i=r+1}^{\infty} \sigma_i^2,
\end{equation}
where $\sigma_i^2$ are the principal POD variances.
Performing unstable equilibrium-subtracted POD on the training data reveals that $r=3$ POD modes are needed to capture $90\%$ of the variance and $r=9$ POD modes are needed to capture $99\%$ of the variance.
We find that a large number of POD modes must be retained in the model in order to produce accurate linear reconstructions using DEIM.
However, the state trajectories are seen to lie on a $2$-dimensional slow manifold visualized in \cref{fig:CylWake_LaplaceEigs_PODcoords} by plotting the leading graph Laplacian eigenfunctions and tangent planes over the training data in the POD coordinates.
This suggests that it may be possible to reconstruct the full state as a nonlinear function of vorticity measurements at two or more points in physical space.
\begin{figure}
  \centering
  \subfloat[]{
    \includegraphics[width=0.3\textwidth]{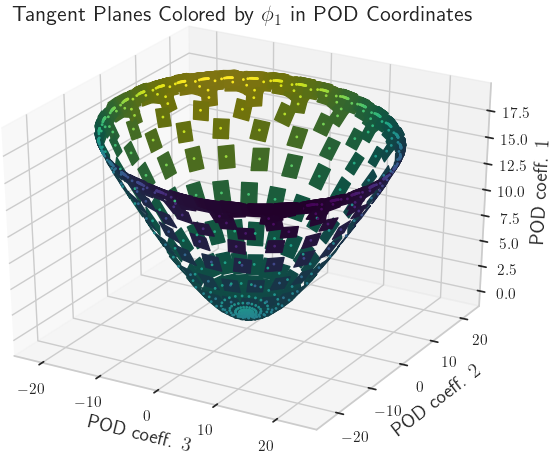}
    \label{fig:CylWake_Phi1_PODcoords}
  }
  \hfill
  \subfloat[]{
    \includegraphics[width=0.3\textwidth]{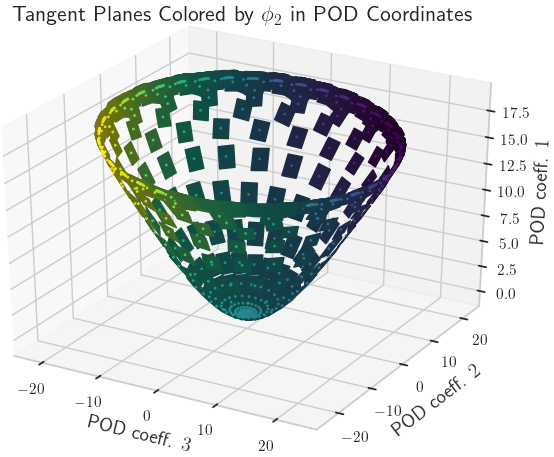}
    \label{fig:CylWake_Phi2_PODcoords}
  }
  \hfill
  \subfloat[]{
    \includegraphics[width=0.3\textwidth]{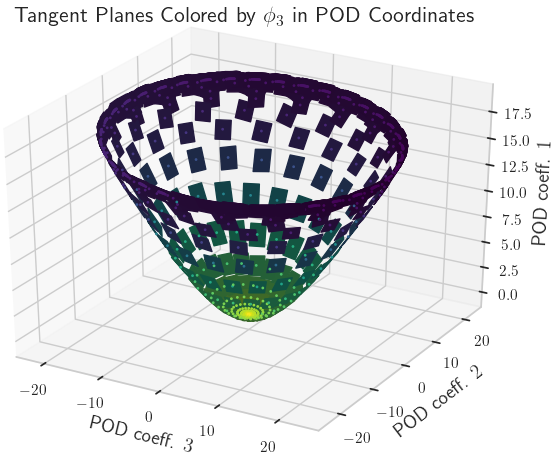}
    \label{fig:CylWake_Phi3_PODcoords}
  }
  \label{fig:CylWake_LaplaceEigs_PODcoords}
  \caption{Cylinder wake states along a trajectory plotted in the leading $3$ POD coordinates and colored according to the leading graph Laplacian eigenfunctions. Tangent planes are shown at each point. The data is observed to lie on a $2$-dimensional manifold in $17889$-dimensional space.}
\end{figure}

A naive approach that produces acceptable results in this example is to identify the sampling points using DEIM on the subspace spanned by the leading $r=2$ POD modes.
The locations in the flow identified using this method are $(\xi_1=3.30, \xi_2=0.00)$ and $(\xi_1=6.54, \xi_2=-0.60)$ which are shown in \cref{fig:cylinderWake_exampleSnapshots} using green square markers.
\Cref{fig:CylWake_LaplaceEigs_DEIMcoords} shows the state trajectories plotted in this coordinate system and colored by the Laplacian eigenfunction values.
The eigenfunction values parameterize the manifold and we see that they are single-valued functions of the DEIM coordinates since the colors in \cref{fig:CylWake_Phi1_DEIMcoords}, \cref{fig:CylWake_Phi2_DEIMcoords}, and \cref{fig:CylWake_Phi3_DEIMcoords} do not overlap.
Therefore, it is possible to reconstruct the full state $\mathbf{x}\in\mathbb{R}^{17889}$ as a nonlinear function of the vorticity measurements at the DEIM points.
\begin{figure}
  \centering
  \subfloat[]{
    \includegraphics[width=0.3\textwidth]{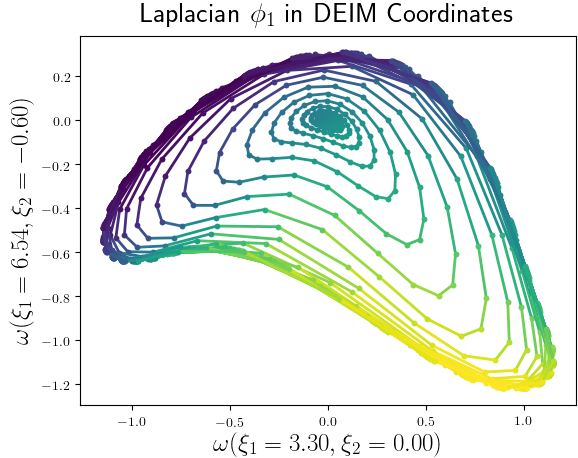}
    \label{fig:CylWake_Phi1_DEIMcoords}
  }
  \hfill
  \subfloat[]{
    \includegraphics[width=0.3\textwidth]{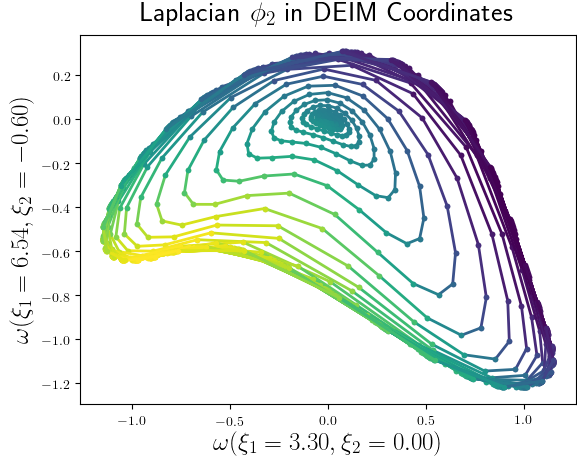}
    \label{fig:CylWake_Phi2_DEIMcoords}
  }
  \hfill
  \subfloat[]{
    \includegraphics[width=0.3\textwidth]{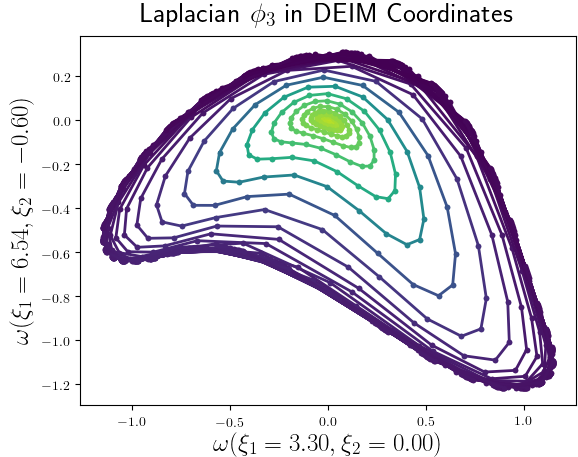}
    \label{fig:CylWake_Phi3_DEIMcoords}
  }
  \label{fig:CylWake_LaplaceEigs_DEIMcoords}
  \caption{Cylinder wake state trajectories plotted in the coordinates identified by DEIM and colored according to the leading graph Laplacian eigenfunctions. 
  Because the Laplacian eigenfunctions are single-valued in this coordinate system (colors do not overlap), it is possible to (nonlinearly) reconstruct the state from these measurements.}
\end{figure}

However, the POD-DEIM approach described above is not guaranteed to identify points that can be used to reconstruct the full state as a nonlinear function, as will be shown using a concrete example in \cref{subsec:Burgers}.
On the other hand, the nonlinear DEIM presented in this paper does provide this guarantee by construction.
In order to apply NLDEIM to the cylinder wake data, we found tangent planes to the underlying manifold at each training data point using the Laplacian eigenmaps-based algorithm discussed in \cref{app:LapEigAndTanSpace}.
SimPQR with $\gamma = 1/K = 10^{-3}$ and $\epsilon = 10^{-4}$ was then used to find immersing NLDEIM coordinates for immersing the manifold.
The locations in the flow identified using this method are $(\xi_1=3.18, \xi_2=0.00)$ and $(\xi_1=2.22, \xi_2=0.12)$ which are shown in \cref{fig:cylinderWake_exampleSnapshots} using black ``x'' markers.
Since there were no global ambiguities between points on the manifold in these coordinates, the immersing NLDEIM coordinates proved to be sufficient to embed the manifold and hence to reconstruct the entire state.
The state trajectories in the NLDEIM coordinates are shown in \cref{fig:CylWake_LaplaceEigs_NLDEIMcoords}, which is a graphical demonstration that the full state can be reconstructed using only these measurements.
\begin{figure}
  \centering
  \subfloat[]{
    \includegraphics[width=0.3\textwidth]{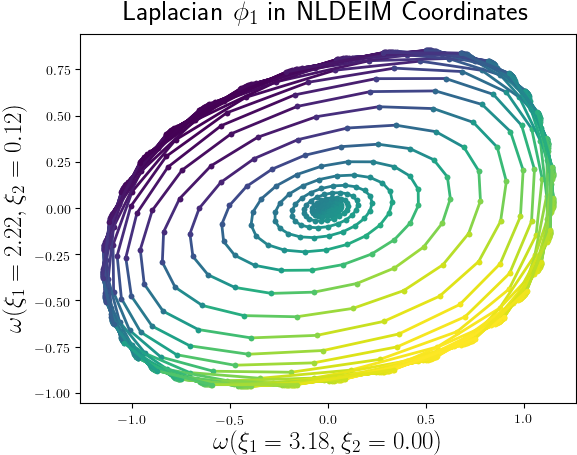}
    \label{fig:CylWake_Phi1_NLDEIMcoords}
  }
  \hfill
  \subfloat[]{
    \includegraphics[width=0.3\textwidth]{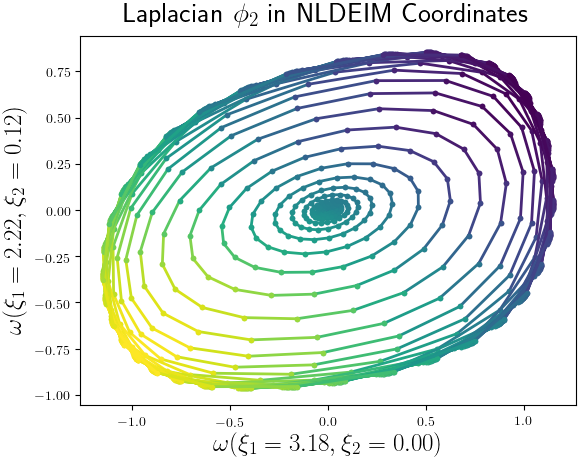}
    \label{fig:CylWake_Phi2_NLDEIMcoords}
  }
  \hfill
  \subfloat[]{
    \includegraphics[width=0.3\textwidth]{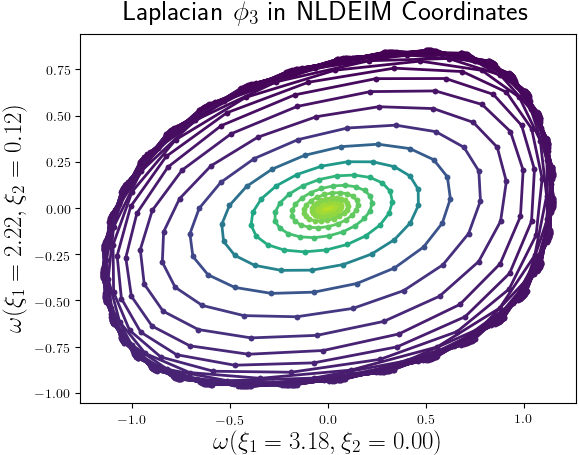}
    \label{fig:CylWake_Phi3_NLDEIMcoords}
  }
  \label{fig:CylWake_LaplaceEigs_NLDEIMcoords}
  \caption{Cylinder wake state trajectories plotted in the coordinates identified by NLDEIM and colored according to the leading graph Laplacian eigenfunctions. 
  Because the Laplacian eigenfunctions are single-valued in this coordinate system (colors do not overlap), it is possible to (nonlinearly) reconstruct the state from these measurements.}
\end{figure}

How does NLDEIM compare to the original approach using POD and DEIM on this example?
One way to make this comparison is to quantify how robustly the full state can be reconstructed using the respective coordinates in the presence of small noise.
The locally linear reconstruction \cref{eqn:locallyLinearRecon0} amplifies noise in the selected coordinates through the matrix $\left(\boldsymbol{\Pi}_{\mathscr{P}}^T\mathbf{U}_{\mathbf{\bar{x}}}\right)^{+}$, whose largest singular value determines the worst-case amplification on that patch.
The distribution of noise amplification levels over patches centered at each training data point are plotted in \cref{fig:CylWake_amplification}, showing that the NLDEIM coordinates enable significantly more robust reconstructions than the coordinates chosen using POD and DEIM.
In fact, the boxplot in \cref{fig:CylWake_amplification} shows that the median amplification using the DEIM coordinates is higher than the maximum amplification using the NLDEIM coordinates.
\begin{figure}
    \centering
    \includegraphics[width=0.5\linewidth]{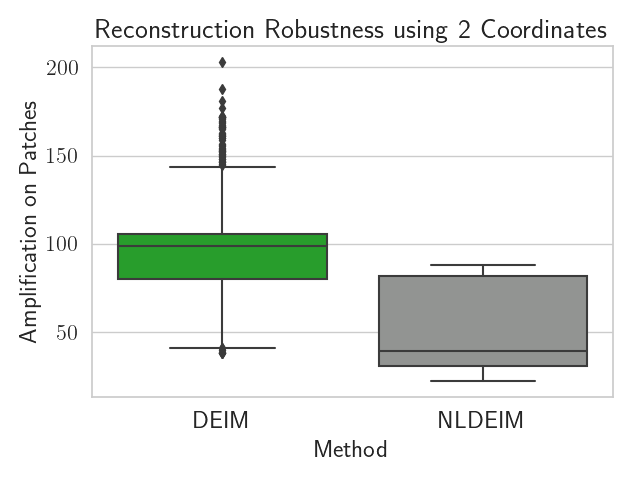}
    \caption{Distribution of largest singular value of $\left(\boldsymbol{\Pi}_{\mathscr{P}}^T\mathbf{U}_{\mathbf{\bar{x}}}\right)^{+}$ over patches at every training data point. We see that the coordinates chosen by NLDEIM amplify noise less than those chosen by DEIM.}
    \label{fig:CylWake_amplification}
\end{figure}

\subsection{POD Mode Selection in Cylinder Wake Flow}
\label{subsec:CylinderWakePOD}
It is also possible to use the Nonlinear DEIM technique to identify a collection of POD modes to which the state of a system is enslaved.
Common wisdom says that the state will be described by the most energetic modes, yet this is not true in general unless those modes capture essentially all of the variance of the state.
In applications where we want to build very low-order models, we cannot include all of the energetic modes, so it is essential to choose the smallest collection that are needed to fully describe the state.

Let us consider the problem of choosing between the leading $200$ POD modes used to describe the same cylinder wake flow as in \cref{subsec:CylinderWakeSampling}.
The state trajectories in the leading two POD modes shown in \cref{fig:cylinderWake_modes1and2} are plotted in \cref{fig:CylWake_LaplaceEigs_2PODcoords} and are colored according to the leading three eigenfunctions of the graph Laplacian.
We clearly see that the Laplacian eigenfunction values are not single-valued functions of the leading two POD coordinates. Furthermore, there are even points where the trajectory is perpendicular to these coordinates.
Therefore, the slow manifold in the cylinder wake flow cannot be immersed or embedded using the coefficients on these modes and attempts to build a reduced order model based on them would be doomed.
\begin{figure}
  \centering
  \subfloat[]{
    \includegraphics[width=0.47\textwidth]{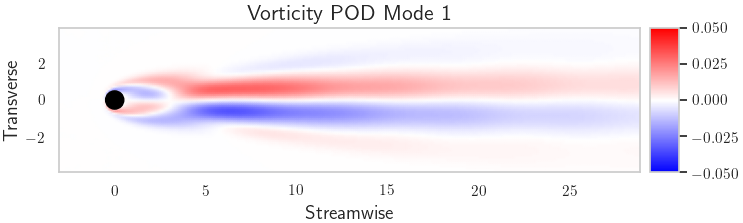}
  }
  \hfill
  \subfloat[]{
    \includegraphics[width=0.47\textwidth]{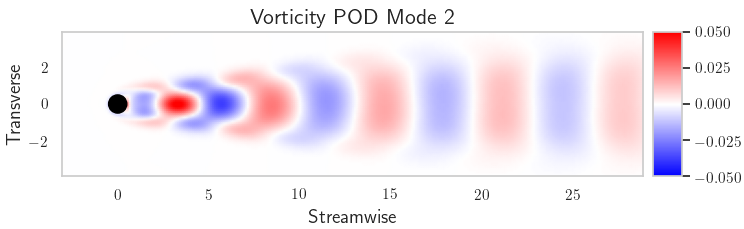}
  }
  \label{fig:cylinderWake_modes1and2}
  \caption{The leading two equilibrium-subtracted POD modes for the cylinder wake flow computed using the training data.}
\end{figure}
\begin{figure}
  \centering
  \subfloat[]{
    \includegraphics[width=0.3\textwidth]{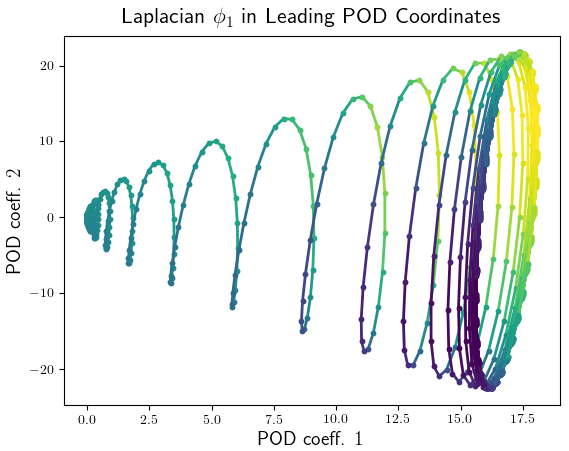}
    \label{fig:CylWake_Phi1_2PODcoords}
  }
  \hfill
  \subfloat[]{
    \includegraphics[width=0.3\textwidth]{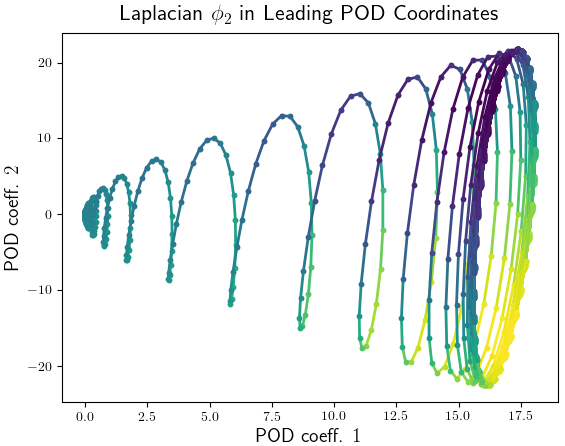}
    \label{fig:CylWake_Phi2_2PODcoords}
  }
  \hfill
  \subfloat[]{
    \includegraphics[width=0.3\textwidth]{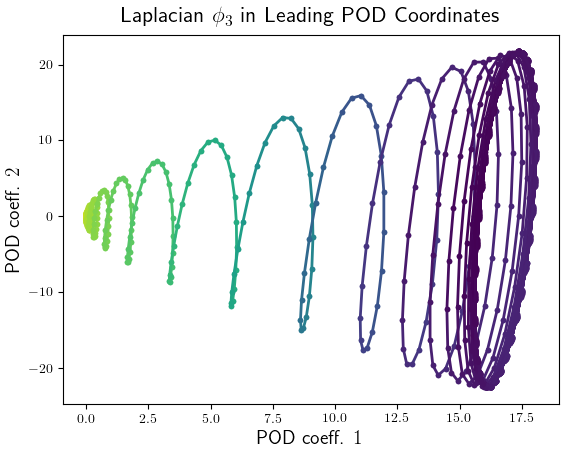}
    \label{fig:CylWake_Phi3_2PODcoords}
  }
  \label{fig:CylWake_LaplaceEigs_2PODcoords}
  \caption{Cylinder wake state trajectories plotted in the leading two POD coordinates and colored according to the leading graph Laplacian eigenfunctions. 
  Because the Laplacian eigenfunctions are not single-valued in this coordinate system, it is impossible to reconstruct the state from these measurements.}
\end{figure}

On the other hand, one easily sees that the coefficients on POD modes $2$ and $3$ shown in \cref{fig:cylinderWake_modes2and3} can be used to embed the underling manifold.
Indeed, these modes are recovered by performing NLDEIM using SimPQR with $\gamma = 1/K = 10^{-3}$ and $\epsilon = 10^{-4}$ on the tangent planes to the training data in the leading $n=200$ POD coordinates.
One can see in \cref{fig:CylWake_LaplaceEigs_PODcoords} that the tangent planes all have non-singular projections into the $\mathscr{P}=\lbrace 2,3 \rbrace$ coordinate plane.
The trajectory of the cylinder wake flow is plotted in these coordinates in \cref{fig:CylWake_LaplaceEigs_2NLDEIMPODcoords}, clearly illustrating that each point can be reconstructed using the POD coefficients selected by NLDEIM.
These coordinates are well-suited to building reduced-order models, either based on nonlinear Galerkin or purely data-driven methods.
\begin{figure}
  \centering
  \subfloat[]{
    \includegraphics[width=0.47\textwidth]{Figures/CylinderWake_POD/CylWake_Vorticity_PODmode2.PNG}
  }
  \hfill
  \subfloat[]{
    \includegraphics[width=0.47\textwidth]{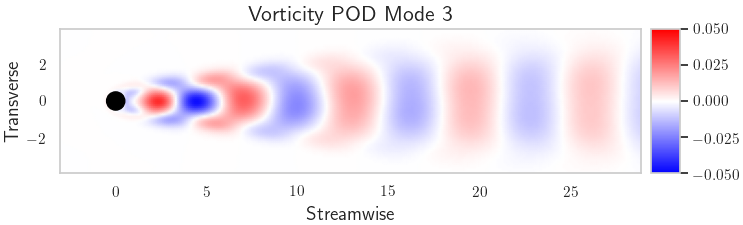}
  }
  \label{fig:cylinderWake_modes2and3}
  \caption{Equilibrium-subtracted POD modes $2$ and $3$ for the cylinder wake flow computed using the training data.}
\end{figure}
\begin{figure}
  \centering
  \subfloat[]{
    \includegraphics[width=0.3\textwidth]{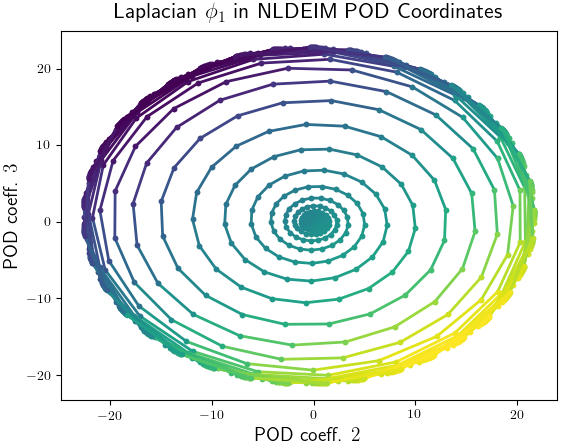}
    \label{fig:CylWake_Phi1_2NLDEIMPODcoords}
  }
  \hfill
  \subfloat[]{
    \includegraphics[width=0.3\textwidth]{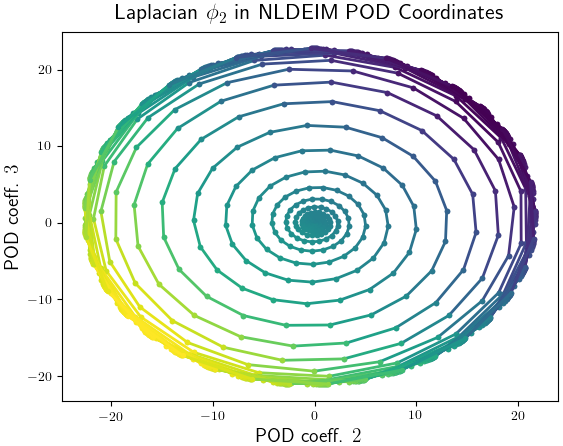}
    \label{fig:CylWake_Phi2_2NLDEIMPODcoords}
  }
  \hfill
  \subfloat[]{
    \includegraphics[width=0.3\textwidth]{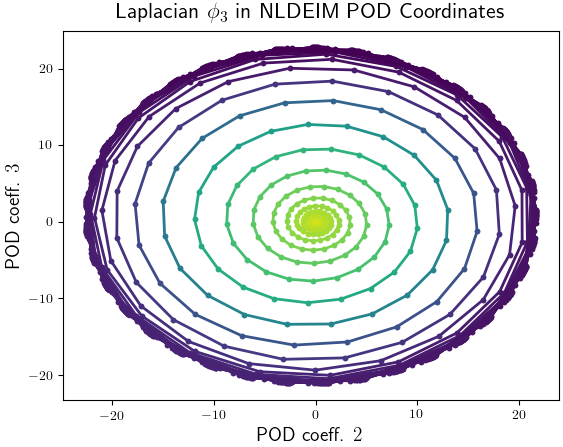}
    \label{fig:CylWake_Phi3_2NLDEIMPODcoords}
  }
  \label{fig:CylWake_LaplaceEigs_2NLDEIMPODcoords}
  \caption{Cylinder wake state trajectories plotted in the POD coordinates $\mathscr{P}=\lbrace{2,3}$ selected by NLDEIM and colored according to the leading graph Laplacian eigenfunctions. 
  Because the Laplacian eigenfunctions are single-valued in this coordinate system, it is possible to reconstruct the state from these measurements.}
\end{figure}

\subsection{Sampling Locations for Burgers Equation}
\label{subsec:Burgers}
In this section, we present an example where choosing a small set of coordinates using POD and DEIM on a subspace that does not capture all of the data's variance fails to embed the manifold on which the data lies and therefore does not allow for reconstruction of the full state.
In contrast, NLDEIM using SimPQR succeeds in identifying suitable embedding coordinates that are empirically and intuitively justified, yielding further insight about the problem.
Consider the viscous Burgers equation with periodic boundary conditions,
\begin{equation}
    u_t + u u_{\xi} = \nu u_{\xi\xi}, \quad 0\leq \xi\leq 1, \quad 0\leq t\leq 1, \quad u(\xi+1,t) = u(\xi,t),
\end{equation}
starting from triangularly-shaped initial conditions
\begin{equation}
    u(\xi,0) = \max{\left\lbrace 0, 1 - \frac{2}{\chi}\vert \xi - 0.5 \vert \right\rbrace},
\end{equation}
having different widths $0.1\leq \chi\leq 0.5$.
If we discretize the PDE using $n=256$ spatial grid points then the snapshots $\mathbf{u}$ of the solution lie on a $2$-dimensional manifold $\mathcal{M}\subset\mathbb{R}^{256}$ parameterized by the time $t$ and width $\chi$ of the initial condition.
Some snapshots from simulations ranging over $0\leq t\leq 1$ at the two extreme initial conditions and $\nu = 10^{-3}$ are shown in \cref{fig:BurgersSimPlts}.
Simulations were performed at $25$ evenly spaced widths ranging over $0.1\leq \chi\leq 0.5$.
$500$ snapshots were recorded from each simulation every $\Delta t = 0.002$.
All the snapshots across the different simulations were used as the training data in this example.
\begin{figure}
    \centering
    \subfloat[]{
    \includegraphics[width=0.45\linewidth]{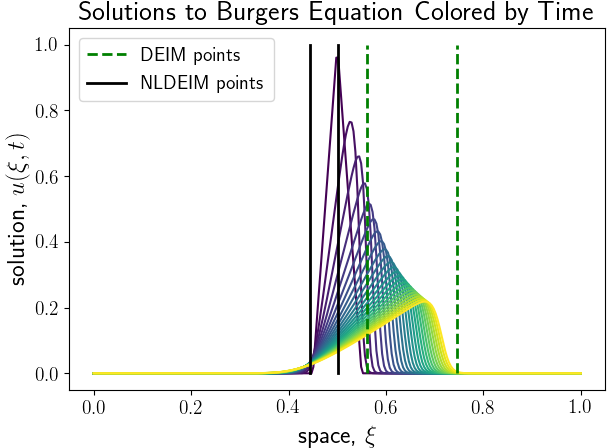}
    \label{fig:BurgersSimPlt01}
    }
    \hfill
    \subfloat[]{
    \includegraphics[width=0.45\linewidth]{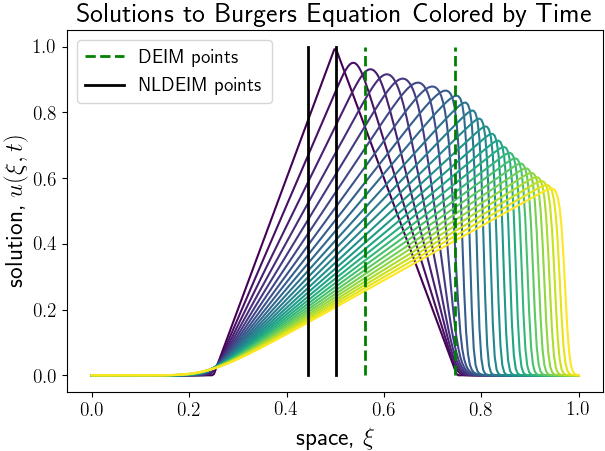}
    \label{fig:BurgersSimPlt05}
    }
    \caption{Example simulations of the Burgers equation from the two extreme initial conditions. A collection of snapshots is plotted for each initial condition and colored according to the simulation time. The locations of the points selected by DEIM and Nonlinear DEIM are indicated by the dashed green and solid black vertical lines respectively.}
    \label{fig:BurgersSimPlts}
\end{figure}

It is easy to see that even though the manifold $\mathcal{M}$ of solution snapshots is $2$-dimensional, it is curved and cannot be captured by any low-dimensional subspace.
Equivalently, the solution snapshots in \cref{fig:BurgersSimPlt01} and \cref{fig:BurgersSimPlt05} and all those cases in between cannot be accurately represented through superpositions of a small number of modes because the solutions have peaks that translate in space.
In fact, $15$ mean-subtracted POD modes are needed to capture $99\%$ of the training data's variance and the remaining principal variances continue to decay slowly.
Consequently, we cannot expect that performing DEIM using the leading $2$ POD modes, yielding measurement locations $\xi = 0.73$ and $\xi = 0.58$ shown as vertical dashed green lines in \cref{fig:BurgersSimPlts}, will reveal coordinates that immerse or embed the manifold on which the data lies.
Indeed, plotting the training data in the DEIM coordinates and coloring each point by the parameterizing coordinates time, $t$ and initial width, $\chi$ in \cref{fig:Burgers_TimeWidth_DEIMcoords}, we see that the manifold cannot be reconstructed from the coordinates selected by DEIM.
\begin{figure}
  \centering
  \subfloat[]{
    \includegraphics[width=0.45\textwidth]{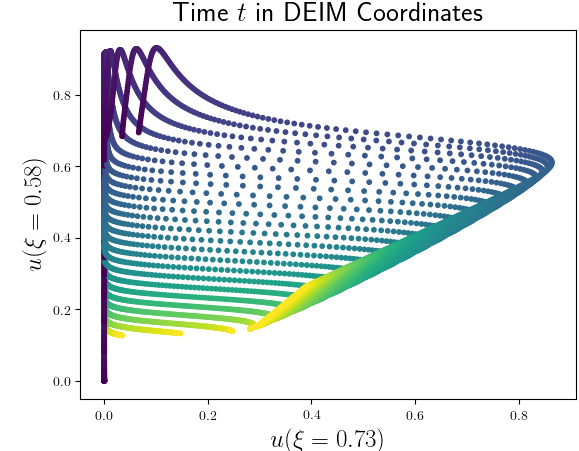}
    \label{fig:Burgers_Time_DEIMcoords}
  }
  \hfill
  \subfloat[]{
    \includegraphics[width=0.45\textwidth]{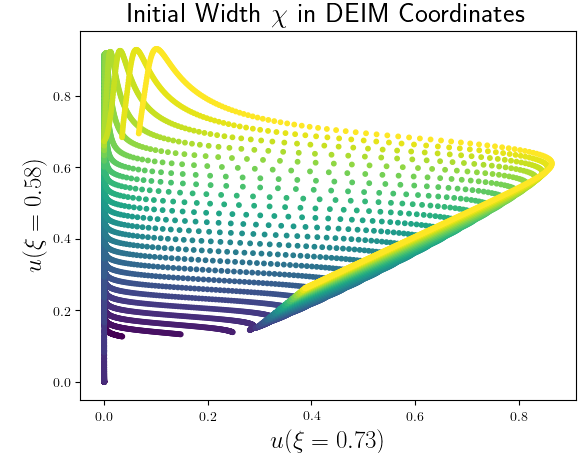}
    \label{fig:Burgers_Width_DEIMcoords}
  }
  \label{fig:Burgers_TimeWidth_DEIMcoords}
  \caption{Burgers equation snapshots plotted in the coordinates identified by DEIM and colored according to the parameterizing coordinates time, $t$ and initial width, $\chi$. 
  We see that the underlying manifold folds back on itself when projected into the DEIM coordinates; hence the parameterizing coordinates are not single-valued functions of the coordinates selected by DEIM.}
\end{figure}

As discussed briefly in \cref{rem:TangentSpacesFromData}, our training data in this example comes from experiments performed over a regular grid of parameters $t$ and $\chi$.
Therefore, the tangent vectors can be approximated simply by orthonormalizing the finite differences
\begin{equation}
\begin{bmatrix}
\mathbf{v}_{t}(t_{i},\chi_{j}) & \mathbf{v}_{\chi}(t_{i},\chi_{j})
\end{bmatrix} = \mathbf{U}_{i,j}\mathbf{R}_{i,j}, \qquad
\left\lbrace
    \begin{aligned}
    \mathbf{v}_{t}(t_{i},\chi_{j}) &= \mathbf{u}(t_{i+1},\chi_{j}) - \mathbf{u}(t_{i},\chi_{j}) \\
    \mathbf{v}_{\chi}(t_{i},\chi_{j}) &= \mathbf{u}(t_{i},\chi_{j + 1}) - \mathbf{u}(t_{i},\chi_{j})
    \end{aligned}
\right.,
\end{equation}
at the training points $\mathbf{u}(t_{i},\chi_{j})$ for $1\leq i < 500$ and $1\leq j < 25$.
The tangent planes at $K=5000$ randomly selected training data points were used to perform SimPQR with $\epsilon = 10^{-6}$ and $\gamma = 1/K = 2\times 10^{-4}$ to reveal the immersing Nonlinear DEIM sampling locations $\xi = 0.44$ and $\xi = 0.50$ shown as vertical black lines in \cref{fig:BurgersSimPlts}.
In this case, the immersing NLDEIM coordinates are sufficient to embed the underlying manifold and reconstruct the full state without any additional branching coordinates.
This is illustrated in \cref{fig:Burgers_TimeWidth_NLDEIMcoords} by plotting the training snapshots in the NLDEIM coordinate system and coloring the points according to the parameterizing coordinates $t$ and $\chi$.
It is clear from \cref{fig:Burgers_TimeWidth_NLDEIMcoords} that the parameterizing coordinates and hence the entire state are single-valued functions of the selected NLDEIM coordinates.
We observe that one NLDEIM sampling point marks the center of the domain and the peak of each initial condition.
The other NLDEIM sampling point marks the left-most edge where the narrowest initial condition is supported.
Intuitively from \cref{fig:BurgersSimPlts}, we see that $u(\xi=0.44,t)$ and $u(\xi=0.50,t)$ can be used to reconstruct the initial width of the blip $\chi$ as well as the time of the snapshot $t$.
\begin{figure}
  \centering
  \subfloat[]{
    \includegraphics[width=0.45\textwidth]{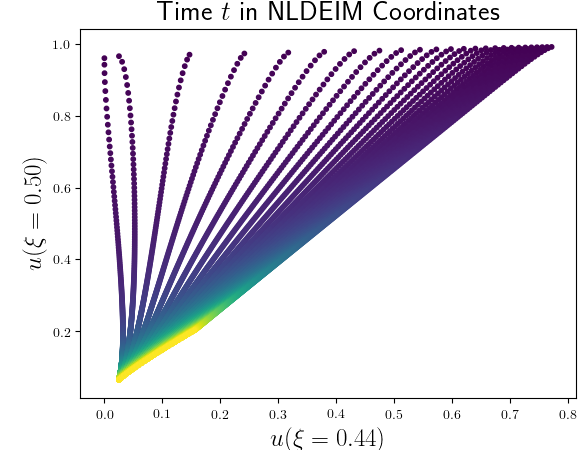}
    \label{fig:Burgers_Time_NLDEIMcoords}
  }
  \hfill
  \subfloat[]{
    \includegraphics[width=0.45\textwidth]{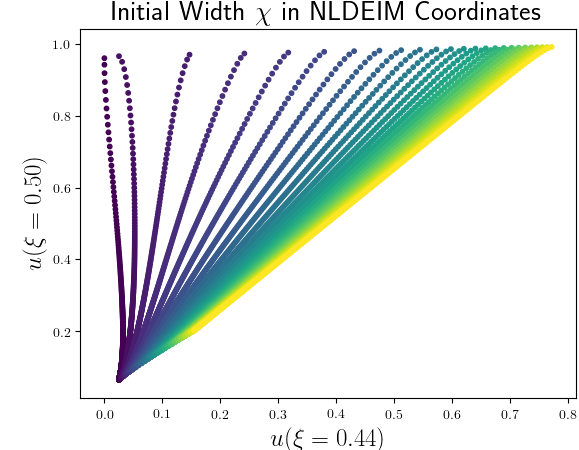}
    \label{fig:Burgers_Width_NLDEIMcoords}
  }
  \label{fig:Burgers_TimeWidth_NLDEIMcoords}
  \caption{Burgers equation snapshots plotted in the coordinates identified by NLDEIM and colored according to the parameterizing coordinates time, $t$ and initial width, $\chi$. 
  We see that the parameterizing coordinates and hence the entire state are single-valued functions of the NLDEIM coordinates.}
\end{figure}

To further illustrate the difference between the coordinates chosen using DEIM and NLDEIM, we trained simple distance-weighted $3$-nearest neighbor regression models to reconstruct the remaining state variables using scikit-learn.
The testing data set consisted of $25$ simulations at randomly chosen $\chi\in[0.1,\ 0.5]$ from which $50$ snapshots were collected, totaling $1250$ examples.
The empirical distributions of the relative error $\Vert \mathbf{\hat{u}} - \mathbf{u} \Vert_2 /\Vert \mathbf{u} \Vert_2$ are plotted in \cref{fig:BurgersDataErrors}.
Indeed, the small relative errors using the NLDEIM coordinates indicate that they contain all or nearly all information about the state $\mathbf{u}\in\mathcal{M}$.
Different regression or interpolation methods might yield even lower reconstruction errors using the NLDEIM coordinates.
However, any reconstruction using the DEIM coordinates is limited by the fact that there are local and global ambiguities between points on the underlying manifold in these coordinates as seen in \cref{fig:Burgers_TimeWidth_DEIMcoords}, hence the state is not actually a function of these coordinates.
\begin{figure}
    \centering
    \includegraphics[width=0.5\linewidth]{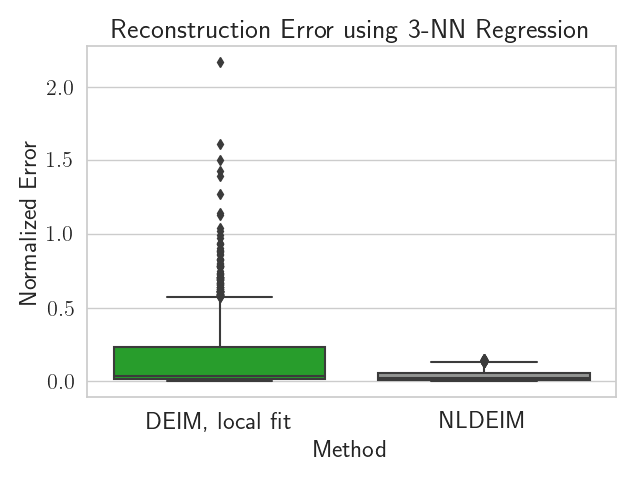}
    \caption{Empirical relative error distributions on the testing data set when using DEIM and NLDEIM coordinates to predict the remaining state variables with $3$-nearest neighbors regression. The $l_2$ error is normalized by the solution magnitude.}
    \label{fig:BurgersDataErrors}
\end{figure}

\section{Conclusion and Future Work}
In this paper we have explored how to find small collections of immersing and embedding coordinates for data lying on low-dimensional manifolds in high-dimensional space.
By extending the well-understood case of a linear manifold that leads to the DEIM algorithm, we formulated a novel nonlinear DEIM algorithm for identifying coordinates that parameterize all tangent spaces to a nonlinear manifold.
This lead to a trade-off between choosing fewer coordinates by sharing pivot columns across more patches and tangent space reconstruction robustness which is handled through a user-defined parameter in the SimPQR algorithm.
We find that in different parameter regimes, we can achieve guarantees regarding either the robustness or parsimony of coordinate choices at each stage of the algorithm.
At one extreme, SimPQR chooses the coordinates that can be applied across the largest number of patches and at the other extreme SimPQR chooses columns that are Businger-Golub pivots on every patch to which they are applied.
Furthermore, it is easy to construct every possible solution in between these extremes since at any given value of the parameter, we can find the next value of the parameter that produces a change in the solution.

The coordinates discovered by applying our SimPQR algorithm to a collection of tangent space bases eliminated continuous ambiguities between points, but did not resolve discrete ambiguities arising from the global manifold geometry.
We proposed a method for resolving this issue by applying the same SimPQR algorithm to a different collection of vectors or matrices describing the gaps between different branches of the underlying manifold in the immersion coordinates.

In addition to toy examples, we considered two real-world examples coming from high-dimensional dynamical systems.
In the cylinder wake example both DEIM and nonlinear DEIM recovered sets of coordinates that embed the underlying slow manifold, however, the nonlinear DEIM embedding was shown to be superior from the perspective of tangent space reconstruction robustness.
Of course DEIM is not guaranteed to find a set of coordinates that immerse or embed the manifold unless the subspace on which DEIM is performed captures essentially all of the data's variance.
The Burgers equation example provides an example where the data lies on a two-dimensional manifold that is not captured in any low-dimensional subspace.
When DEIM is applied to the leading POD modes, it yields a set of coordinates which fail to immerse or embed the manifold.
On the other hand nonlinear DEIM is guaranteed by construction to produce an immersion.
In fact the coordinates identified using nonlinear DEIM are not only provide an immersion, but also an embedding in this example.
Furthermore, the sampling locations revealed by nonlinear DEIM on the Burgers equation correspond to our physical intuition about the problem.
The SimPQR method developed in this paper for nonlinear DEIM proved to be an efficient method that successfully recovers nearly optimal collections of immersion and embedding coordinates on toy and real-world examples.

There are many avenues for future work related to nonlinear DEIM described in the following short subsections.

\subsection{Model Reduction via Manifold Projection}
Recent model reduction techniques require performing of a Galerkin-like projection of the nonlinear dynamics onto a collection of locally-optimal subspaces \cite{Amsallem2012nonlinear, Peng2016nonlinear} or onto a learned solution manifold \cite{Lee2018model}.
Nonlinear DEIM allows us to identify a global set of reduced coordinates that may be used to accelerate the projection operations required by the manifold Petrov-Galerkin procedures of \cite{Lee2018model} during the online phase.

However, one part of the problem that we did not address in this paper is how to reconstruct the state from the sampled coordinates;
this will require implementing an efficient and accurate interpolation or regression algorithm.
Some possibilities include Gaussian process regression \cite{Rasmussen2003gaussian} or Fefferman's interpolation method \cite{Fefferman2005interpolation, Fefferman2009whitney}.

\subsection{Curvature-Based Patch Selection}
Throughout this paper we have assumed that a representative collection of patches has been found that provide an accurate locally linear approximation of the underlying manifold.
In our examples, we centered the patches either at the training data points or at a randomly chosen subset of the training data.
However, since the proposed SimPQR algorithm scales with $\mathcal{O} (K\log K)$ where $K$ is the number of patches, it is desirable to choose as few patches as necessary to approximate the manifold.
A higher concentration of patches will be needed in regions where the manifold is highly curved whereas a lower concentration is needed in regions where the manifold is approximately flat.
Therefore, a future avenue could involve choosing patch locations based on local manifold curvature to achieve a certain level of accuracy for the locally linear approximation.

\subsection{Noise Robustness and Alternative Formulations}
The work reported in this paper has primarily focused on recovering coordinates that robustly immerse or embed the underlying manifolds near which data lie.
The notions of robustness considered here were minimal noise amplification for immersing coordinates and maximum distance between branches in the branching coordinates.
Of course there are many other notions of robustness that we intend to consider in future work.
In particular, we may have some information about the probability distribution of the noise, allowing us to formulate a greedy algorthm for NLDEIM that over-samples the state in order to reject the noise.

It may also be possible to implement a ``strong'' version of SimPQR by performing additional pivoting operations based on the strong PQR factorizaton method of \cite{Gu1996}.
Such a method may be capable of finding better collections of nonlinear DEIM coordinates as measured by the minimum patch volume.

\subsection{More Efficient Algorithms}
The SimPQR method developed in this paper relies on storing and accessing the entire tangent space matrices $\lbrace \mathbf{U}_k \rbrace_{k=1}^K$ which will become a memory bottle-neck when the state dimension is large.
For example, we may have millions of states when applying nonlinear DEIM to data sets coming from fluid dynamics simulations.
In order to avoid working with the full tangent space matrices, we may implement a simultaneous variant of the so-called ``communication-avoiding'' PQR algorithm \cite{Demmel2015}.
Such an algorithm would employ ``tournament pivoting'' by finding nonlinear DEIM coordinates immersing hierarchical sub-collections of states.
The nonlinear DEIM coordinates on smaller sub-collections will be used to find the nonlinear DEIM coordinates on larger sub-collections and so on until a collection is found that immerses the entire state space manifold.

% \section*{Reproducible Research}
% Python code for running each of the examples presented in this paper is available at \url{https://github.com/samotto1/NonlinearDEIM}. Random sampling of points produce small differences in the tangent plane approximations each time the code is run. The results in this paper reflect typical cases.

\section*{Acknowledgments}
We would like to thank Scott Dawson for providing us with the data from his
cylinder wake simulations. 
We would also like to thank Andres Goza, Michael Mueller, and Alberto Padovan for helpful discussions about nonlinear DEIM and its applications.
The authors gratefully acknowledge funding from the Army Research Office, under award number W911NF-17-1-0512.

\bibliographystyle{siamplain}
\bibliography{RowleyGroupReferences}

\appendix

\section{Householder Transformations}
\label{app:Householder}
Once the pivot column $j^*$ is chosen during the $i$th stage of PQR \cref{alg:PQR}, the Householder vector and coefficient $(\mathbf{v}_i,\ \beta) = \house(\mathbf{A}(i:r,j^*))$ are computed using \cref{alg:House}.
\begin{algorithm}
  \caption{Householder Vector, Algorithm 5.1.1 in \cite{Golub2013matrix}}
  \label{alg:House}
  \textbf{function} $(\mathbf{v},\ \beta) = \house{(\mathbf{x})}$
  \begin{algorithmic}
    \STATE{$m=\length{(\mathbf{x})}$, $\sigma = \mathbf{x}(2:m)^T\mathbf{x}(2:m)$}
    \STATE{initialize $\mathbf{v} = \begin{bmatrix}
    1 \\ \mathbf{x}(2:m)
    \end{bmatrix}$}
    \COMMENT{Householder vector}
    \IF[$\mathbf{x}$ already aligned with $\mathbf{e}_1$]{$\sigma = 0$ and $\mathbf{x}(1) \geq 0$}
        \STATE{$\beta = 0$}
        \COMMENT{Householder reflection is trivial}
    \ELSIF[$\mathbf{x}$ already aligned with $-\mathbf{e}_1$]{$\sigma = 0$ and $\mathbf{x}(1) < 0$}
        \STATE{$\beta = -2$}
        \COMMENT{Householder reflection flips sign of $\mathbf{x}$}
    \ELSE[construct transformation in non-trivial case]
        \STATE{$\mu = \sqrt{\mathbf{x}(1)^2 + \sigma}$}
        \IF{$\mathbf{x}(1) \leq 0$}
            \STATE{$\mathbf{v}(1) = \mathbf{x}(1) - \mu$}
        \ELSE
            \STATE{$\mathbf{v}(1) = -\sigma/(\mathbf{x}(1) + \mu)$}
        \ENDIF
        \STATE{$\beta = 2\mathbf{v}(1)^2/(\sigma + \mathbf{v}(1)^2)$}
        \STATE{$\mathbf{v} \leftarrow \mathbf{v}/\mathbf{v}(1)$}
    \ENDIF
    
    \RETURN{$\mathbf{v}$, $\beta$}
    \COMMENT{householder vector and coefficient}
  \end{algorithmic}
\end{algorithm}
The Householder reflection
\begin{equation}
    \mathbf{H}_i = \mathbf{I}_r - \underbrace{\frac{2}{\Vert \mathbf{v}_i \Vert^2}}_{\beta}\mathbf{\tilde{v}}_i\mathbf{\tilde{v}}_i^T, \qquad 
    \mathbf{\tilde{v}}_i = \begin{bmatrix}
    \mathbf{0}_{i-1} \\
    \mathbf{v}_i
    \end{bmatrix},
\end{equation}
is a unitary transformation that zeros out rows $i+1$ through $r$ when applied to the pivot column:
\begin{equation}
    \mathbf{H}_i\begin{bmatrix}
    \mathbf{A}(1:i-1,j^*) \\
    \mathbf{A}(i,j^*) \\
    \mathbf{A}(i+1:r,j^*)
    \end{bmatrix} = 
    \begin{bmatrix}
    \mathbf{A}(1:i-1,j^*) \\
    \Vert \mathbf{A}(i:r,j^*) \Vert \\
    \mathbf{0}_{r-i}
    \end{bmatrix}.
\end{equation}
The Householder vectors are all scaled so that $\mathbf{v}_i(1) = 1$ and the remaining entries in the vectors can be stored by over-writing the zeroed out entries in the pivot column so that we obtain
\begin{multline}
    \mathbf{A}^T\begin{bmatrix}[c|c]
    \boldsymbol{\Pi}_{\mathscr{P}} & \boldsymbol{\Pi}_{\mathscr{P}^c} 
    \end{bmatrix} = \\
    \begin{bmatrix}[ccccc|ccc]
    \mathbf{R}(1,1) & \mathbf{R}(1,2) & \cdots & \mathbf{R}(1,r-1) & \mathbf{R}(1,r) & \mathbf{R}(1,1) & \cdots & \mathbf{R}(1,n-r) \\
    \mathbf{v}_{1}(2) & \mathbf{R}(2,2) & \cdots & \mathbf{R}(2,r-1) & \mathbf{R}(2,r) & \mathbf{R}(2,1) & \cdots & \mathbf{R}(2,n-r) \\
    \mathbf{v}_{1}(3) & \mathbf{v}_{2}(2) & \cdots & \mathbf{R}(3,r-1) & \mathbf{R}(3,r) & \mathbf{R}(3,1) & \cdots & \mathbf{R}(3,n-r) \\
    \vdots & \vdots & \ddots & \vdots & \vdots & \vdots & \ddots & \vdots \\
    \mathbf{v}_{1}(r) & \mathbf{v}_{2}(r-1) & \cdots & \mathbf{v}_{r-1}(2) & \mathbf{R}(r,r) & \mathbf{R}(r,1) & \cdots & \mathbf{R}(r,n-r) \\
    \end{bmatrix}
\end{multline}
as the final result of \cref{alg:PQR}.
As described in \cite{Golub2013matrix}, the matrix
\begin{equation}
    \mathbf{Q} = \mathbf{H}_1 \mathbf{H}_2 \cdots \mathbf{H}_{r-1}
\end{equation}
can be formed by successive application of the Householder transformations.

The same construction can be used to find the matrices $\mathbf{Q}_k$ and $\mathbf{\tilde{R}}_k$ using the patch-wise ordered lists of pivot columns $\mathscr{P}_k$, the remaining columns $\mathscr{P}_k^c$ and the matrices $\mathbf{A}_k$ returned by SimPQR \cref{alg:SimPQR}.
Forming permutation matrices $\boldsymbol{\Pi}_{\mathscr{P}_k}$ and $\boldsymbol{\Pi}_{\mathscr{P}_k^c}$ from the index lists $\mathscr{P}_k$ and $\mathscr{P}_k^c$ respectively, the entries in the returned matrices
\begin{multline}
    \mathbf{A}_k^T \begin{bmatrix}[c|c]
    \boldsymbol{\Pi}_{\mathscr{P}_k} & \boldsymbol{\Pi}_{\mathscr{P}_k^c}
    \end{bmatrix} = \\ 
    \begin{bmatrix}[ccccc|ccc]
    \mathbf{\tilde{R}}_k(1,1) & \mathbf{\tilde{R}}_k(1,2) & \cdots & \mathbf{\tilde{R}}_k(1,r-1) & \mathbf{\tilde{R}}_k(1,r) & \mathbf{\tilde{R}}_k(1,1) & \cdots & \mathbf{\tilde{R}}_k(1,n-r) \\
    \mathbf{v}_{k,1}(2) & \mathbf{\tilde{R}}_k(2,2) & \cdots & \mathbf{\tilde{R}}_k(2,r-1) & \mathbf{\tilde{R}}_k(2,r) & \mathbf{\tilde{R}}_k(2,1) & \cdots & \mathbf{\tilde{R}}_k(2,n-r) \\
    \mathbf{v}_{k,1}(3) & \mathbf{v}_{k,2}(2) & \cdots & \mathbf{\tilde{R}}_k(3,r-1) & \mathbf{\tilde{R}}_k(3,r) & \mathbf{\tilde{R}}_k(3,1) & \cdots & \mathbf{\tilde{R}}_k(3,n-r) \\
    \vdots & \vdots & \ddots & \vdots & \vdots & \vdots & \ddots & \vdots \\
    \mathbf{v}_{k,1}(r) & \mathbf{v}_{k,2}(r-1) & \cdots & \mathbf{v}_{k,r-1}(2) & \mathbf{\tilde{R}}_k(r,r) & \mathbf{\tilde{R}}_k(r,1) & \cdots & \mathbf{\tilde{R}}_k(r,n-r) \\
    \end{bmatrix}
\end{multline}
contain the entries in $\mathbf{\tilde{R}}_k$ as well as the Householder vectors $\mathbf{v}_{k,1}$, $\mathbf{v}_{k,2}$, $\ldots$, $\mathbf{v}_{k,r-1}$.
Clearly, one obtains
\begin{equation}
    \mathbf{Q}_{k} = \mathbf{H}_{k,1} \mathbf{H}_{k,2} \cdots \mathbf{H}_{k, r-1}, \quad
    \mbox{where} \quad
    \mathbf{H}_{k,i} = \mathbf{I}_r - \frac{2}{\Vert \mathbf{v}_{k,i} \Vert^2}\mathbf{\tilde{v}}_{k,i}\mathbf{\tilde{v}}_{k,i}^T, \quad
    \mathbf{\tilde{v}}_{k,i} = \begin{bmatrix}
    \mathbf{0}_{i-1} \\
    \mathbf{v}_{k,i}
    \end{bmatrix},
\end{equation}
as a result.

\section{Tangent Spaces from Laplacian Eigenmaps}
\label{app:LapEigAndTanSpace}

\subsection{The Graph Laplacian}
In this section, we follow \cite{Belkin2003}.
Let us begin by defining a graph $\mathcal{G} = (\mathcal{N}, \mathcal{E})$ containing nodes $\mathcal{N}$ at the training points and edges $\mathcal{E}$ between nodes $i$ and $j$ if $j$ is one of $i$'s $k$-nearest neighbors or vice versa.
Also, for convenience, let $\mathcal{N}_i = \left\lbrace k \ : \ (i,k)\in\mathcal{E} \right\rbrace$ be the set of $i$'s neighbors.

define a weight function
\begin{equation}
w_{i,k} = k\left(\left\Vert\mathbf{x}_i - \mathbf{x}_k\right\Vert/\sigma_{i,k}\right)
\end{equation}
using the training data $\mathbf{x}_1,$ $\ldots,$ $\mathbf{x}_m\in\mathcal{M}$, based on the Gaussian kernel $k(d) = \exp{\left(-\frac{1}{2} d^2\right)}$ and a symmetric local scale $\sigma_{i,k}$. 
The local scale is a user-specified constant $\alpha$ times the RMS distance:
\begin{equation}
\sigma_{i,k} = \alpha \sqrt{\frac{1}{2\vert \mathcal{N}_i\vert}\sum_{j\in\mathcal{N}_i} \left\Vert\mathbf{x}_i - \mathbf{x}_j \right\Vert^2 + \frac{1}{2\vert \mathcal{N}_k\vert}\sum_{l\in\mathcal{N}_k} \left\Vert\mathbf{x}_k - \mathbf{x}_l \right\Vert^2}.
\end{equation}

The graph Laplacian acts on functions $\phi:\mathcal{N}\rightarrow\mathbb{R}$ according to
\begin{equation}
\mathcal{L}_i [\phi] = \sum_{k\in\mathcal{N}_i} w_{i,k} \phi_k - \underbrace{\left(\sum_{k\in\mathcal{N}_i} w_{i,k}\right)}_{d_i} \phi_i, \quad \forall i\in\mathcal{N}.
\end{equation}
The Laplacian represents a kind locally-weighted averaging or diffusion process on the graph.
We look for the eigenfunctions $\mathcal{L}[\phi^{(j)}] = -\lambda_j \phi^{(j)}$ with smallest strictly positive eigenvalues $\lambda_j>0$ since these yield the parameterizing coordinates. 
Due to well-known results in harmonic analysis, the eigenfunction values $\phi_i^{(j)},$ $j=1,\ldots,r$ give intrinsic coordinates for $\mathbf{x}_i\in\mathcal{M}$.

\subsection{Tangent Spaces from the Graph Laplacian}
Notice that if we have an eigenfunction $\mathcal{L}[\phi^{(j)}] = -\lambda_j \phi^{(j)}$, then we can write its value at $i\in\mathcal{N}$ as
\begin{equation}
\phi_i^{(j)} = (d_i - \lambda_j)^{-1}\sum_{k\in\mathcal{N}_i} w_{i,k} \phi_k^{(j)}.
\end{equation}
We may interpolate this eigenfunction in a small neighborhood of $\mathbf{x}_i$ by letting the weights vary naturally in space according to
\begin{equation}
\tilde{w}_{i,k}(\mathbf{x}) = k\left(\left\Vert\mathbf{x} - \mathbf{x}_k\right\Vert/\sigma_{i,k}\right), \quad 
\tilde{d}_i(\mathbf{x}) = \sum_{k\in\mathcal{N}_i}\tilde{w}_{i,k}(\mathbf{x}).
\end{equation}
The interpolated eigenfunction is then given by
\begin{equation}
\tilde{\phi}_i^{(j)}(\mathbf{x}) = (\tilde{d}_i(\mathbf{x}) - \lambda_j)^{-1}\sum_{k\in\mathcal{N}_i} \tilde{w}_{i,k}(\mathbf{x}) \phi_k^{(j)}.
\end{equation}
We find a basis for the tangent space to $\mathcal{M}\subset\mathbb{R}^n$ at a training point $\mathbf{x}_i$ by computing the gradients of the interpolated eigenfunctions
\begin{equation}
\nabla \tilde{\phi}_i^{(j)}(\mathbf{x}_i) = (d_i - \lambda_j)^{-1}\left[ \sum_{k\in\mathcal{N}_i} \left(\phi_k^{(j)} - \phi_i^{(j)}\right) \nabla \tilde{w}_{i,k}(\mathbf{x}_i)\right],
\end{equation}
where
\begin{equation}
\nabla \tilde{w}_{i,k}(\mathbf{x}_i) = \frac{k'\left(\left\Vert\mathbf{x}_i - \mathbf{x}_k\right\Vert/\sigma_{i,k}\right)}{\sigma_{i,k}\left\Vert\mathbf{x}_i - \mathbf{x}_k\right\Vert}\left(\mathbf{x}_i - \mathbf{x}_k\right).
\end{equation}
If we have a collection of eigenfunctions $\phi^{1},$ $\ldots,$ $\phi^{r'}$, $r'\geq r$ paramterizing the graph nodes $\mathcal{N}$ then we can interpolate them on the manifold near $\mathbf{x}_i$ by forming \\
$\boldsymbol{\tilde{\Phi}}_i(\mathbf{x}) = \begin{bmatrix} \tilde{\phi}_i^{(1)}(\mathbf{x}) & \cdots & \tilde{\phi}_i^{(r')}(\mathbf{x}) \end{bmatrix}^T$.
Then, an orthonormal basis $\mathbf{U}_i$ for the tangent space at $\mathbf{x}_i$ can be found by computing the SVD of the eigenfunction gradients
\begin{equation}
    \nabla \boldsymbol{\tilde{\Phi}}_i(\mathbf{x}_i)^T = \mathbf{U}_i \boldsymbol{\Sigma}_i \mathbf{V}_i^T
\end{equation}
and truncating to rank-$r$.

\begin{remark}
    Since the derivatives of the Laplacian eigenfunctions become small perpendicular to the boundaries of the manifold, it is sometimes necessary to neglect these points when finding tangent planes. We simply removed the rank-deficient points when the above method was applied. More work is needed to robustly identify the boundary points and estimate their tangent planes.
\end{remark}

\section{Toy Models}
\label{app:ToyModels}
All of the following examples are contrived in order to illustrate various aspects of nonlinear DEIM using the SimPQR algorithm.

\subsection{2D Surface in 10D}
\label{subapp:2DSurf}
This example presents another case where using DEIM fails to produce a set of coordinates from which the full state can be reconstructed.
Of course DEIM is not guaranteed to fail, in fact it might even choose the optimal set of coordinates, but this would be an accident because it is not guaranteed to do so unless all of the data's variance is captured by the chosen subspace.
On the other hand NLDEIM is guaranteed, due to its construction, to provide a (possibly sub-optimal) choice of coordinates that can be used to reconstruct points on the approximate underlying manifold.
NLDEIM is possibly sub-optimal due to its greedy method of choosing coordinates.

In creating the following example, we find that we can force DEIM to choose essentially any of the coordinates simply by scaling those coordinates so that low-dimensional POD subspaces are nearly aligned with them.
On the other hand, Nonlinear DEIM (NLDEIM) using SimPQR cannot be tricked by scaling the original coordinates.

Consider a surface given by
\begin{equation}
\mathcal{M} = \left\lbrace
    \begin{bmatrix}
    \textcolor{blue}{x_1} \\
    x_2 \\
    x_3 \\
    x_4 \\
    x_5 \\
    x_6 \\
    x_7 \\
    x_8 \\
    x_9 \\
    \textcolor{blue}{x_{10}}
    \end{bmatrix} = 
    \begin{bmatrix}
    \textcolor{blue}{x_1} \\
    \sin (\pi \textcolor{blue}{x_1}) \\
    \cos (\pi \textcolor{blue}{x_1}) \\
    \sin (\pi \textcolor{blue}{x_{10}}) \\
    \cos (\pi \textcolor{blue}{x_{10}}) \\
    2.0 \sin (\pi \textcolor{blue}{x_1}) \sin (\pi \textcolor{blue}{x_{10}}) \\
    \sin (\pi \textcolor{blue}{x_1}) \cos (\pi \textcolor{blue}{x_{10}}) \\
    2.0 \cos (\pi \textcolor{blue}{x_1}) \sin (\pi \textcolor{blue}{x_{10}}) \\
    \cos (\pi \textcolor{blue}{x_1}) \cos (\pi \textcolor{blue}{x_{10}}) \\
    \textcolor{blue}{x_{10}}
    \end{bmatrix}\in\mathbb{R}^{10}: \quad
    (\textcolor{blue}{x_1}, \textcolor{blue}{x_{10}})\in [-1,1]\times[-1,1] \right\rbrace.
    \label{eqn:2DsurfMfd}
\end{equation}
The $x_1, x_{10}, x_{4}$ projection of $m=1000$ data points randomly sampled, $x_1\sim U([-1,1])$, $x_{10}\sim U([-1,1])$, from this surface is plotted in \cref{fig:2dSurfIn10D_TangentPlanes}.
We see from \cref{fig:2DSurf_SingularVals} that the data is not accurately captured by an approximation in any low-dimensional subspace.
Indeed, the manifold \cref{eqn:2DsurfMfd} is constructed so that $x_2$, $\ldots,$ $x_9$ are orthogonal functions of the uniformly sampled underlying coordinates $x_1$ and $x_{10}$.
Furthermore, we can force the leading POD subspace to align more closely with a chosen set of coordinates simply by multiplying them by larger factors;
in this example, we have scaled $x_6$ and $x_8$ by a factor of $2$.
Performing DEIM using the leading principal components of the sampled data selects coordinates $x_6$ and $x_8$ since the leading $2$-dimensional POD subspace is only $12.3^{\circ}$ away from the $x_6, x_8$ coordinate plane.
Projecting the data into the DEIM coordinate plane and coloring the points according to the leading graph Laplacian eigenfunctions in \cref{fig:2DSurf_LaplaceEfuns_DEIMcoords} reveals that the manifold is not embedded in these coordinates and hence the state cannot be reconstructed from $x_6$ and $x_8$. 
\begin{figure}
    \centering
    \includegraphics[width=0.5\linewidth]{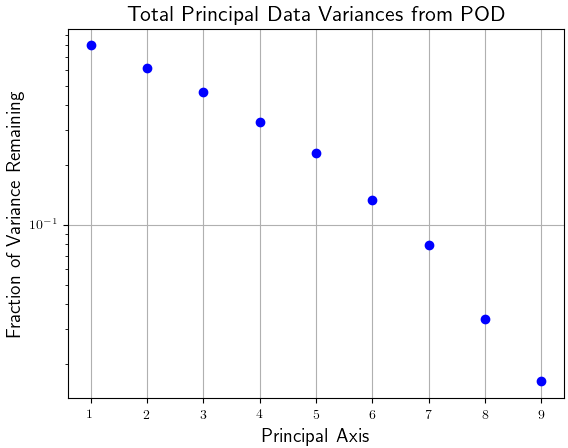}
    \caption{Fractions of total variance captured by the leading principal components of the 2D surface data set. We see that these data do not lie in a low-dimensional subspace even though the manifold is intrinsically 2D.}
    \label{fig:2DSurf_SingularVals}
\end{figure}

\begin{figure}
  \centering
  \subfloat[]{
    \includegraphics[width=0.45\textwidth]{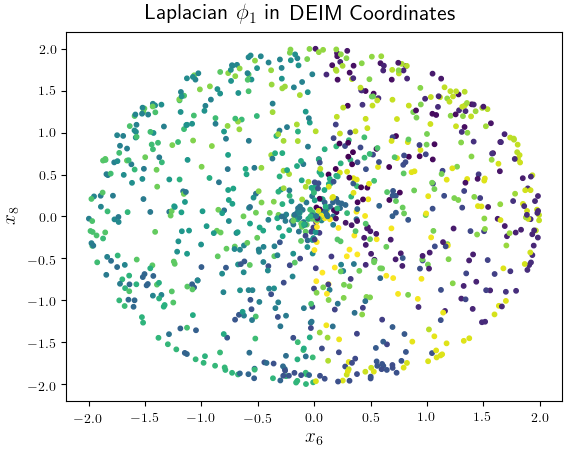}
    \label{fig:2DSurf_Phi1_DEIMcoords}
  }
  \hfill
  \subfloat[]{
    \includegraphics[width=0.45\textwidth]{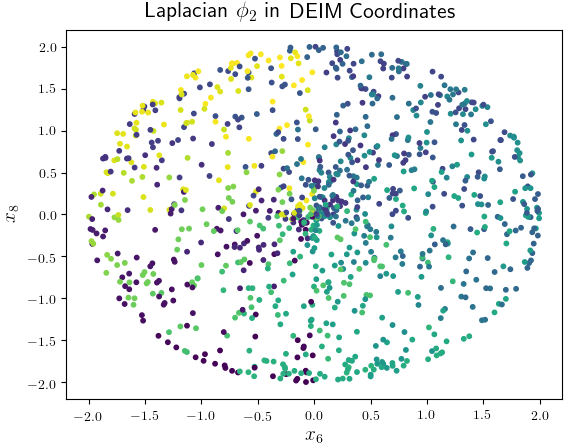}
    \label{fig:2DSurf_Phi2_DEIMcoords}
  }
  \label{fig:2DSurf_LaplaceEfuns_DEIMcoords}
  \caption{Example data points lying on the manifold \cref{eqn:2DsurfMfd} projected into the coordinate system identified using DEIM and colored by the leading graph Laplacian eigenfunction values.
  Since the Laplacian eigenfunction values are not single-valued functions of the chosen coordinates, the full state cannot be reconstructed as a function of $x_6$ and $x_8$.}
\end{figure}

On the other hand, we use the method presented in \cref{app:LapEigAndTanSpace} to compute tangent planes at each training data point and apply SimPQR with $\gamma = 1/K = 10^{-3}$ and $\epsilon = 10^{-6}$ to reveal the immersing NLDEIM coordinates $x_1$ and $x_{10}$.
These coordinates are sufficient to embed the underlying manifold \cref{eqn:2DsurfMfd} as illustrated by its definition as well as in \cref{fig:2DSurf_LaplaceEfuns_NLDEIMcoords} where the leading graph Laplacian eigenfunction are plotted in the NLDEIM coordinate system.
\begin{figure}
  \centering
  \subfloat[]{
    \includegraphics[width=0.45\textwidth]{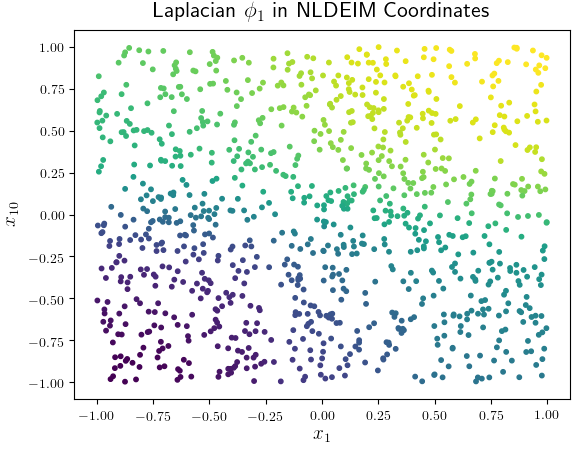}
    \label{fig:2DSurf_Phi1_NLDEIMcoords}
  }
  \hfill
  \subfloat[]{
    \includegraphics[width=0.45\textwidth]{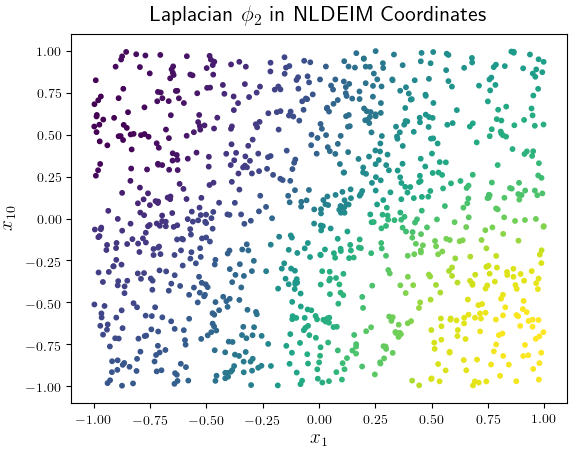}
    \label{fig:2DSurf_Phi2_NLDEIMcoords}
  }
  \label{fig:2DSurf_LaplaceEfuns_NLDEIMcoords}
  \caption{Example data points lying on the manifold \cref{eqn:2DsurfMfd} projected into the coordinate system identified using NLDEIM and colored by the leading graph Laplacian eigenfunction values.
  Since the Laplacian eigenfunction values are single-valued functions of the chosen coordinates, the full state can be reconstructed as a function of $x_1$ and $x_{10}$.}
\end{figure}

Next it is possible to investigate the performance of DEIM and NLDEIM in terms of maximum patch-wise amplification on this example when more coordinates are selected by each method.
The leading $d$ principal components of the training data were used to find $d$ DEIM coordinates for $d=2,3,\ldots,9$.
The method described in \cref{subsec:GammaPath} was used to sweep over every value of the parameter $\gamma$ which ranged from $\gamma = 10^{-3}$ up to a maximum value of $\gamma = 6.30$.
For each unique number of coordinates $d$, the choice that produced the smallest maximum amplification over the patches was recorded.
The maximum and mean disturbance amplification levels over the tangent planes at the training data points are plotted in \cref{fig:2DSurf_Amplification}.
We see that small numbers of NLDEIM coordinates offer more robust reconstruction of the full state than small numbers of DEIM coordinates on this example.
\begin{figure}
    \centering
    \includegraphics[width=0.5\linewidth]{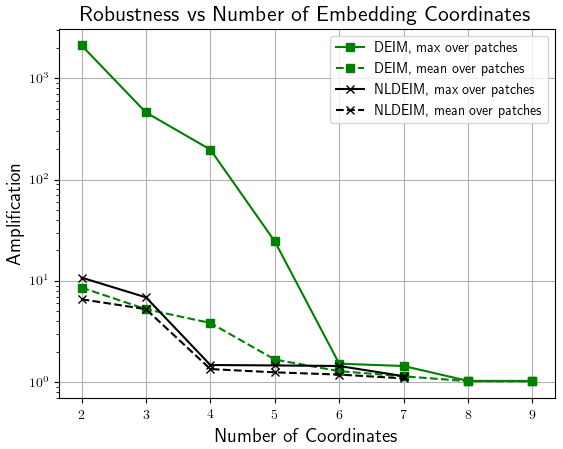}
    \caption{Maximum and mean amplification, $\sigma_{max}\left[ (\boldsymbol{\Pi}_{\mathscr{P}}^T\mathbf{U}_{\mathbf{\bar{x}}})^{+} \right]$, over tangent planes at each training data point on the 2D surface in 10D described by \cref{eqn:2DsurfMfd} using coordinates selected by DEIM and NLDEIM.
    We see that small numbers of DEIM coordinates provide much less robust reconstructions of the full state than NLDEIM coordinates on this example.}
    \label{fig:2DSurf_Amplification}
\end{figure}

\subsection{2D Cylindrical Manifold}
\label{subapp:cylMfd}
In this example, we consider a surface given by
\begin{equation}
\mathcal{M} = \left\lbrace
    \begin{bmatrix}
    \textcolor{blue}{x_1} \\
    \textcolor{blue}{x_2} \\
    \textcolor{blue}{x_3}
    \end{bmatrix} = 
    \begin{bmatrix}
    \cos{\theta} \\
    \sin{\theta} \\
    \textcolor{blue}{x_3}
    \end{bmatrix}\in\mathbb{R}^{3}: \quad
    (\theta, \textcolor{blue}{x_3})\in [0,2\pi]\times[-1,1] \right\rbrace.
    \label{eqn:CylMfd}
\end{equation}
The surface is intrinsically $2$-dimensional, but cannot be parameterized by any two of the original coordinates as illustrated in \cref{fig:CylMfd}.
Nonlinear DEIM using SimPQR successfully recovers the parameterizing coordinate indices $\mathscr{P} = \lbrace 1, 2, 3 \rbrace$ for every $\gamma \in (0, \infty)$ as long as the value of $\epsilon$ is not too small.
The parameter $\epsilon$ determines how large the columns $\mathbf{c}_k(j)$ in SimPQR \cref{alg:SimPQR} must be in order to make them available for pivoting on a given patch.
If $\mathbf{c}_k(j) < \epsilon$, then the coordinate $j$ has already been eliminated on patch $k$, in that case $\mathbf{c}_k(j) = 0$, or the tangent plane is nearly perpendicular to the $j$-axis.
Since we are using a finite number of randomly chosen patches on the manifold \cref{eqn:CylMfd}, there will not be any patch that is exactly perpendicular to $x_1$ or $x_2$.
The value of $\epsilon$ determines how close to perpendicular a patch must be to a given coordinate in order for that coordinate to be unusable for pivoting.
In this example, taking $\epsilon$ as small as $10^{-5}$ still reveals that there are tangent planes perpendicular to $x_1$ and $x_2$.
However, taking $\epsilon = 10^{-6}$ and $\gamma = 1/K = 10^{-3}$, SimPQR reveals that all of the tangent planes we randomly chose in this example can be projected into $x_1$ and $x_3$, which of course they can be, but the result is nearly singular because we have to use columns with very small magnitudes for pivoting.
If employ SimPQR with $\epsilon = 10^{-6}$ and choose the next larger value of $\gamma$ that produces a change in the solution, then we again recover the correct coordinates since this choice gives slightly more weight to patch volume.

\subsection{Spiral Manifold}
\label{subapp:SpiralMfd}
In this section we consider a spiral-shaped manifold
\begin{equation}
\mathcal{M} = \left\lbrace
    \begin{bmatrix}
    \textcolor{blue}{x_1} \\
    \textcolor{blue}{x_2} \\
    \textcolor{blue}{x_3}
    \end{bmatrix} = 
    \begin{bmatrix}
    r \cos{\theta} \\
    r \sin{\theta} \\
    \frac{\theta}{2\pi}
    \end{bmatrix}\in\mathbb{R}^{3}: \quad
    (r, \theta)\in [0.2,1]\times [-\pi,\pi] \right\rbrace.
    \label{eqn:SpiralMfd}
\end{equation}
The surface is intrinsically $2$-dimensional, but like the cylinder, cannot be parameterized by any two of the original coordinates.
\Cref{fig:SpiralMfd_ImmersionDiagram_Planes} shows of $K=m=1000$ points sampled uniformly $r\sim U([0.2,1])$, $\theta\sim U([-\pi,\pi])$ on the manifold and tangent planes computed using the method in \cref{app:LapEigAndTanSpace}.
However, unlike the cylinder in \cref{eqn:CylMfd}, SimPQR using $\epsilon = 10^{-4}$ and any value of $\gamma\in(0,\infty)$ identifies only two coordinates, $x_1$ and $x_2$, that immerse, but do not embed the manifold \cref{eqn:SpiralMfd}.

We use the simple local PCA-based technique of section \cref{sec:BranchingCoordinates} to discover the branching coordinate $x_3$.
Neighbors to each training data point were found by projecting into the $x_1,x_2$ coordinates plane and finding all points within a radius $\rho = 0.1$.
An example of such a neighborhood is shown in \cref{fig:CoordinateNhd_2D}.
On each collection of neighboring points in $x_1,x_2$ coordinates, local PCA was performed using all coordinates $x_1,x_2,x_3$ and any principal component whose singular value was greater than $2\rho$ was retained.
In this case, these principal components are vectors pointing between the clusters of neighboring points in $x_1,x_2$ coordinates on different levels or branches of the spiral manifold shown in \cref{fig:CoordinateNhd_3D}.
Performing SimPQR with $\epsilon=10^{-4}$ and $\gamma = 1/K = 10^{-3}$ on these vectors yields the branching coordinate $x_3$ that is needed to resolve the discrete ambiguities between points on \cref{eqn:SpiralMfd} projected into the $x_1,x_2$ plane.
Improved techniques for finding branching coordinates could be a subject for future work.

\section{Proofs}
\label{app:Proofs}

\begin{proof}[proof of \cref{prop:volumeSurrogate}]
By one of the interlacing properties of singular values \cite{Thompson1972principal, Queiro1987interlacing}, the $l$th singular value of a sub-matrix is always less than or equal to the $l$th singular value of the full matrix.
Since $\mathbf{U}\in\mathbb{R}^{n\times r}$ is orthonormal, its first $r$ singular values are all equal to $\sigma_1(\mathbf{U}) = \cdots = \sigma_r(\mathbf{U}) = 1$.
Since $\boldsymbol{\Pi}_{\mathscr{P}}^T\mathbf{U}$ is a sub-matrix of $\mathbf{U}$ it follows that $\sigma_l(\boldsymbol{\Pi}_{\mathscr{P}}^T\mathbf{U}) \leq 1$ for all $l=1,\ldots,r$.
The first desired inequality holds because
\begin{equation}
    \prod_{l=1}^{r-1} \sigma_{l}\left(\boldsymbol{\Pi}_{\mathscr{P}}^T\mathbf{U}\right) \leq 1 \quad \Rightarrow \quad
    \sigma_{r}\left(\boldsymbol{\Pi}_{\mathscr{P}}^T\mathbf{U}\right) \geq 
    \prod_{l=1}^r \sigma_{l}\left(\boldsymbol{\Pi}_{\mathscr{P}}^T\mathbf{U}\right).
\end{equation}

The volume, $\Vol\left(\boldsymbol{\Pi}_{\mathscr{P}}^T\mathbf{U}\right) = \left\vert \det{\left( [\boldsymbol{\Pi}_{\mathscr{P}}^T\mathbf{U}]_{\mathscr{I},:} \right)} \right\vert$ is given by the absolute value of the determinant of a square sub-matrix $[\boldsymbol{\Pi}_{\mathscr{P}}^T\mathbf{U}]_{\mathscr{I},:}$ of $\boldsymbol{\Pi}_{\mathscr{P}}^T\mathbf{U}$.
Since the absolute value of the determinant of any square matrix is the product of its singular values, the second desired inequality follows from the interlacing property:
\begin{equation}
    \prod_{l=1}^{r} \sigma_r\left( \boldsymbol{\Pi}_{\mathscr{P}}^T\mathbf{U} \right) \geq 
    \prod_{l=1}^{r} \sigma_r\left( [\boldsymbol{\Pi}_{\mathscr{P}}^T\mathbf{U}]_{\mathscr{I},:} \right) = 
    \Vol\left(\boldsymbol{\Pi}_{\mathscr{P}}^T\mathbf{U}\right).
\end{equation}
\end{proof}

\begin{proof}[proof of \cref{thm:gamma_bound}]
Choose any $1\leq j\leq n$ and $\mathscr{S}\subseteq\mathscr{S}_j$ satisfying $\sqrt{\mathbf{c}_k(j)} < \eta \sqrt{C_k}$ for some $k\in\mathscr{S}$.
Then the objective in \cref{eqn:SimPQR_optimalPivoting} is bounded above by
\begin{equation}
    \frac{\vert \mathscr{S} \vert}{S_{max}} + 
    \gamma\cdot \min_{k\in\mathscr{S}} \sqrt{\frac{\mathbf{c}_k(j)}{C_k}}
    < 1 + \gamma\eta.
\end{equation}
Since we can always find $\tilde{j}$ and at least one $\tilde{k}\in\mathscr{S}_{\tilde{j}}$ so that $\mathbf{c}_{\tilde{k}}(\tilde{j}) = C_k$, the value of the objective at $j^*$ and $\mathscr{S}^*$ can be bounded below by the objective at $\tilde{j}$ and $\mathscr{S}=\lbrace \tilde{k} \rbrace$, giving
\begin{equation}
    \frac{\vert \mathscr{S}^* \vert}{S_{max}} + 
    \gamma\cdot \min_{k\in\mathscr{S}^*} \sqrt{\frac{\mathbf{c}_k(j^*)}{C_k}}
    \geq \frac{1}{S_{max}} + \gamma \geq \frac{1}{K} + \gamma.
\end{equation}
Choosing $\gamma \geq \frac{K-1}{K(1-\eta)}$ ensures that $\frac{1}{K} + \gamma \geq 1 + \gamma\eta$, hence $\sqrt{\mathbf{c}_k(j^*)} < \eta \sqrt{C_k}$ for any $k\in\mathscr{S}^*$ produces a contradiction.
Therefore, at the optimal solution $j^*$, $\mathscr{S}^*$, we must have $\sqrt{\mathbf{c}_k(j^*)} \geq \eta \sqrt{C_k}$ for all $k\in\mathscr{S}^*$.
\end{proof}

\begin{proof}[proof of \cref{thm:gamma_simul}]
To prove the first statement, assume that $\gamma \leq 1-\nu$. 
When $S_{max}=1$ the result is trivial.
For any choice of coordinate $j$ and patches $\mathscr{S}$ where fewer than $\vert \mathscr{S} \vert \leq \nu S_{max}$ patches are chosen, the objective is upper bounded by
\begin{equation}
    \frac{\vert \mathscr{S} \vert}{S_{max}} + 
    \gamma\cdot \min_{k\in\mathscr{S}} \sqrt{\frac{\mathbf{c}_k(j)}{C_k}} \leq
    \nu + \gamma \leq 1.
\end{equation}
On the other hand, we can consider the coordinate $\tilde{j}$ and collection of patches $\tilde{\mathscr{S}}$ where $\vert \tilde{\mathscr{S}} \vert = S_{max}$.
This provides a lower bound
\begin{equation}
    \frac{\vert \mathscr{S}^* \vert}{S_{max}} + 
    \gamma\cdot \min_{k\in\mathscr{S}^*} \sqrt{\frac{\mathbf{c}_k(j^*)}{C_k}} > 1,
\end{equation}
on the optimal objective.
Therefore, the number of simultaneous patch indices at the optimal choice $j^*$ and $S^*$ must be lower bounded by $\vert \mathscr{S}^* \vert > \nu S_{max}$, otherwise there is a contradiction.

To prove the second statement, let $\gamma \leq 1/K$.
For any choice $j$ and $\mathscr{S}$ with $\vert \mathscr{S} \vert < S_{max}$, the objective is upper bounded by
\begin{equation}
     \frac{\vert \mathscr{S} \vert}{S_{max}} + 
    \gamma\cdot \min_{k\in\mathscr{S}} \sqrt{\frac{\mathbf{c}_k(j)}{C_k}} 
    \leq \frac{S_{max}-1}{S_{max}} + \gamma
    \leq 1 - \frac{1}{K} + \gamma \leq 1.
\end{equation}
Since the objective at the optimal choice $j^*$ and $\mathscr{S}^*$ is lower bounded by the value $1$, it follows that we must have $\vert \mathscr{S}^* \vert = S_{max}$.
\end{proof}

\begin{proof}[proof of \cref{lem:maximumVolume}]
There are finitely many choices of patch $k$ and coordinate $j$, hence we can find $\tilde{j}$ and $\tilde{k}$ attaining the second largest normalized magnitude $\tilde{\Theta} = \sqrt{\mathbf{c}_{\tilde{k}}(\tilde{j}) / C_{\tilde{k}}}$.
In other words $\tilde{\Theta} < 1$, but there is no $j$ and $k$ with $\tilde{\Theta} < \sqrt{\mathbf{c}_{k}(j) / C_{k}} < 1$.
Choose
\begin{equation}
    \gamma > \gamma_N = \frac{K-1}{K (1-\tilde{\Theta})}.
\end{equation}
There exists a coordinate index $j$ and a non-empty list of patches $\mathscr{S}$ where $\sqrt{\mathbf{c}_k(j) / C_k} = 1$ for all $k\in\mathscr{S}$, thus the optimal objective is bounded below by
\begin{equation}
    \frac{\vert \mathscr{S}^* \vert}{S_{max}} + 
    \gamma\cdot \min_{k\in\mathscr{S}^*} \sqrt{\frac{\mathbf{c}_k(j^*)}{C_k}} \geq \frac{1}{S_{max}} + \gamma.
\end{equation}
On the other hand, for any coordinate index $j$ and patch index list $\mathscr{S}$ where $\exists k\in\mathscr{S}$ having $\sqrt{\mathbf{c}_k(j) / C_k} < 1$, then the objective is bounded above by
\begin{equation}
    \frac{\vert \mathscr{S} \vert}{S_{max}} + 
    \gamma\cdot \min_{k\in\mathscr{S}} \sqrt{\frac{\mathbf{c}_k(j)}{C_k}} \leq 1 + \gamma\cdot \tilde{\Theta}.
\end{equation}
Choosing $\gamma > \gamma_N$ ensures that
\begin{equation}
    \frac{1}{S_{max}} + \gamma \geq \frac{1}{K} + \gamma > 1 + \gamma\cdot \tilde{\Theta},
\end{equation}
producing a contradiction unless $\mathbf{c}_k(j^*) = C_k$ for all $k\in\mathscr{S}^*$.
Therefore, optimizing the objective in \cref{eqn:SimPQR_optimalPivoting} with $\gamma > \gamma_N$ ensures that $j^*$ is a Businger-Golub pivot column for all the patches in $\mathscr{S}^*$.
\end{proof}

\begin{proof}[proof of \cref{thm:PQR_equivalence}]
Businger-Golub pivoting applied to patch $k$ at the $i$th stage yields a colum choice $j$ satisfying $\mathbf{c}_k(j) = C_k$.
Since the choice may not be unique, there may be many possible PQR factorizations.
We prove that for sufficiently large $\gamma$, SimPQR will realize one of these possible PQR factorizations on each patch.

SimPQR terminates after a finite number of stages since at least a single column and patch index are always chosen at each stage and there are a finite number of patches with finite rank.
Hence, there can be at most $r K$ stages.

At the $i$th stage, \cref{lem:maximumVolume} provides $\gamma_i < \infty$ so that choosing $\gamma > \gamma_i$ ensures that SimPQR chooses a Businger-Golub pivot column on each selected patch.
Taking $\gamma > \gamma_1$ ensures that a Businger-Golub pivot column is selected for each chosen patch during the first stage.
Assume that taking $\gamma > \max \lbrace \gamma_1, \ldots, \gamma_{i-1} \rbrace$ ensures that a Businger-Golub pivot column is selected for each chosen patch during stages $1, \ldots, i-1$.
Hence, on a given patch at stage $i$, a sequence of Businger-Golub pivot columns have already been chosen to perform Householder reflections.
If $\gamma > \max \lbrace \gamma_1, \ldots, \gamma_{i} \rbrace$ then, by the assumption, a sequence of Businger-Golub pivot columns have already been used to perform Householder reflections on each patch.
\Cref{lem:maximumVolume} guarantees that a Businger-Golub pivot column will also be chosen at the $i$th stage on all selected patches.
Therefore, if the algorithm terminates after $N$ stages, then choosing $\gamma > \max \lbrace \gamma_1, \ldots, \gamma_N \rbrace$ forces SimPQR to perform a Businger-Golub pivoted PQR factorization on each patch by induction.
\end{proof}

\begin{proof}[proof of \cref{prop:finiteBranches}]
Since the manifold is smooth and $\boldsymbol{\Pi}_{\mathscr{P}_I}^T$ is an immersion of $\mathcal{M}$ into $\mathbb{R}^d$, there exists an open neighborhood $\mathcal{U}_{\mathbf{x}}\subseteq\mathcal{M}$ around each $\mathbf{x}\in\mathcal{M}$ so that $\boldsymbol{\Pi}_{\mathscr{P}_I}^T(\mathbf{y} - \mathbf{z}) = \mathbf{0}$ if and only if $\mathbf{y} = \mathbf{z}$ for all $\mathbf{y},\mathbf{z}\in\mathcal{U}_{\mathbf{x}}$.
Since $\mathcal{M}$ is compact and $\lbrace \mathcal{U}_{\mathbf{x}} \rbrace_{\mathbf{x}\in\mathcal{M}}$ covers $\mathcal{M}$, there is a finite collection $\lbrace \mathcal{U}_1,$ $\ldots,$ $\mathcal{U}_N\rbrace$ that also covers $\mathcal{M}$.
Observe that any two distinct points $\mathbf{y},\mathbf{z}\in[\mathbf{x}]_{\mathscr{P}_I}$ must belong to two different sets $\mathbf{y}\in\mathcal{U}_i$ and $\mathbf{z}\in\mathcal{U}_j$ with $i\neq j$, for if they belong to the same set $\mathbf{y},\mathbf{z}\in\mathcal{U}_i$, then $\boldsymbol{\Pi}_{\mathscr{P}_I}^T(\mathbf{y} - \mathbf{z}) = \mathbf{0}$ implies that $\mathbf{y}=\mathbf{z}$ must be the same point. 
Thus, by the pigeonhole principle there can be at most $N$ distinct points in any $[\mathbf{x}]_{\mathscr{P}_I}$.
\end{proof}

\begin{proof}[proof of \cref{prop:branchRecoveryRobustness}]
If the bound on the disturbance holds, then by the triangle inequality
\begin{equation*}
    2\Vert \mathbf{n} \Vert < 
    \Vert \boldsymbol{\Pi}_{\mathscr{P}_B}^T\mathbf{y} - \boldsymbol{\Pi}_{\mathscr{P}_B}^T\mathbf{z} \Vert \leq
    \Vert \boldsymbol{\Pi}_{\mathscr{P}_B}^T(\mathbf{y}+\mathbf{n}) - \boldsymbol{\Pi}_{\mathscr{P}_B}^T\mathbf{z} \Vert + \Vert \mathbf{n}\Vert.
\end{equation*}
Subtracting $\Vert \mathbf{n}\Vert$, we obtain the result
\begin{equation*}
    \Vert \boldsymbol{\Pi}_{\mathscr{P}_B}^T(\mathbf{y} + \mathbf{n}) - \boldsymbol{\Pi}_{\mathscr{P}_B}^T\mathbf{y} \Vert \leq \Vert \mathbf{n} \Vert < \Vert \boldsymbol{\Pi}_{\mathscr{P}_B}^T(\mathbf{y}+\mathbf{n}) - \boldsymbol{\Pi}_{\mathscr{P}_B}^T\mathbf{z} \Vert.
\end{equation*}
To show that the bound is tight, let $\mathbf{n} = \frac{1}{2}\boldsymbol{\Pi}_{\mathscr{P}_B}\boldsymbol{\Pi}_{\mathscr{P}_B}^T(\mathbf{z} - \mathbf{y})$.
Then clearly, $\Vert \mathbf{n}\Vert = \frac{1}{2} \Vert \boldsymbol{\Pi}_{\mathscr{P}_B}^T(\mathbf{y} - \mathbf{z}) \Vert$ and direct substitution shows that
\begin{equation*}
    \Vert \boldsymbol{\Pi}_{\mathscr{P}_B}^T(\mathbf{y} + \mathbf{n}) - \boldsymbol{\Pi}_{\mathscr{P}_B}^T\mathbf{y} \Vert 
    = \Vert \boldsymbol{\Pi}_{\mathscr{P}_B}^T(\mathbf{y} + \mathbf{n}) - \boldsymbol{\Pi}_{\mathscr{P}_B}^T\mathbf{z} \Vert,
\end{equation*}
hence the disturbed point is equidistant from $\mathbf{y}$ and $\mathbf{z}$ in the branching coordinates.
\end{proof}

\begin{proof}[proof of \cref{prop:StrongerBranchRecovery}]
Choose a nonzero $\mathbf{w}\in\Delta[\mathbf{x}]_{\mathscr{P}_I}$ then $\mathbf{w} = \mathbf{\tilde{U}}_{\mathbf{x}}\mathbf{a}$ for some $\mathbf{a}\neq\mathbf{0}$.
$\Vol{\left(\boldsymbol{\Pi}_{\mathscr{P}_B}^T\mathbf{\tilde{U}}_{\mathbf{x}}\right)} > 0$ means that $\boldsymbol{\Pi}_{\mathscr{P}_B}^T\mathbf{\tilde{U}}_{\mathbf{x}}$ is left-invertible and hence we can reconstruct
\begin{equation*}
    \mathbf{w} = \mathbf{\tilde{U}}_{\mathbf{x}} (\boldsymbol{\Pi}_{\mathscr{P}_B}^T\mathbf{\tilde{U}}_{\mathbf{x}})^+\boldsymbol{\Pi}_{\mathscr{P}_B}^T\mathbf{w}.
\end{equation*}
It follows that
\begin{equation*}
    \Vert \mathbf{w}\Vert \leq 
    \Vert (\boldsymbol{\Pi}_{\mathscr{P}_B}^T\mathbf{\tilde{U}}_{\mathbf{x}})^+ \Vert \Vert \boldsymbol{\Pi}_{\mathscr{P}_B}^T\mathbf{w}\Vert
    \leq \frac{\Vert \boldsymbol{\Pi}_{\mathscr{P}_B}^T\mathbf{w}\Vert}{\Vol{\left(\boldsymbol{\Pi}_{\mathscr{P}_B}^T\mathbf{\tilde{U}}_{\mathbf{x}}\right)}},
\end{equation*}
yielding the desired inequality.
The bound on the disturbance is trivial.
\end{proof}

\end{document}